\renewcommand{\roman}[1]{%
  \textup{\uppercase\expandafter{\romannumeral#1}}%
}
\renewcommand{\restriction}{\mathord{\upharpoonright}}
\newcommand{\D}{\, \mathrm{d}}
\newcommand{\1}{ \mathbbm{1}} %
\DeclareMathAlphabet\EuScript{U}{eus}{m}{n} %
\providecommand{\testingC}{  C_{\mathsf{T}} } %
\providecommand{\Stop}[1]{ \mathsf{Tree}(#1)} %
\providecommand{\rStop}[1]{ \mathsf{Tree}_r(#1)}
\providecommand{\ch}{ \mathsf{ch}}
\providecommand{\maxStop}[1]{  \mathsf{ch}_r(#1) } %
\providecommand{\mU}[1]{ #1_{r}} %
\theoremstyle{plain}
\newtheorem{teo}{Theorem}[section]  %
\newtheorem{lemma}[teo]{Lemma}
\newtheorem{prop}[teo]{Proposition}
\theoremstyle{definition}
\newtheorem{define}[teo]{Definition}
\theoremstyle{remark}
\newtheorem{remark}[teo]{Remark}
\numberwithin{equation}{section} %
\numberwithin{table}{section}
\title[A sparse quadratic $T1$ theorem]{A sparse Quadratic $T(1)$ theorem}
\author{Gianmarco Brocchi}
\address{
  School of Mathematics\\
  University of Birmingham\\
  B15 2TT\\
  Birmingham, UK}
\email{G.Brocchi@pgr.bham.ac.uk}
\thanks{This work was supported by the UK Engineering and Physical Sciences Research Council (EPSRC) grant  EP/L016516/1 for the University of Birmingham.} %
\date{\today}
\begin{document}
\subjclass[2010]{42B20, 42B25}
\keywords{Sparse domination, $T(1)$ theorem, Littlewood--Paley square functions} %

\begin{abstract}
  We show that any Littlewood--Paley square function $S$ satisfying a
  minimal local testing condition
  is dominated by a sparse form,
  \begin{equation*}
    \langle (Sf)^2,g \rangle\le C \sum_{I \in \mathscr{S}} \langle \lvert f\rvert\rangle_I^2 \langle \lvert g\rvert\rangle_I \lvert I\rvert .
  \end{equation*}
  This implies strong weighted $L^p$ estimates for all $A_p$ weights
  with sharp dependence on the $A_p$ characteristic.
  The proof uses random dyadic grids, decomposition in the Haar basis,
  and a stopping time argument.  
\end{abstract}

\maketitle

\section{Introduction}

\subsection{Setting}
Let $\{\theta_t\}_{t>0}$ be a family of integral operators
$\theta_t f(x) = \int_{\mathbb{R}^d} k_t(x,y) f(y) \D{y}$ for which
there exists $C>0$ and $\alpha\in(0,1]$ such that for all $x,y,x',y' \in \mathbb{R}^d$ and $t>0$
the kernels $k_t$ satisfy the following size and regularity conditions:
\begin{align} \label{eq:size_condition} \tag{\textsf{C}1}
  \lvert k_t(x,y) \rvert & \le C \frac{t^\alpha}{(t + \lvert x - y \rvert)^{\alpha + d}}  \\
  \label{eq:smooth_condition} \tag{\textsf{C}2}
  \lvert k_t(x,y) - k_t(x',y')\rvert &\le C \frac{\lvert x - x'\rvert^\alpha + \lvert y - y'\rvert^\alpha}{(t + \lvert x - y \rvert)^{\alpha + d}} && \text{ if }  \lvert x-x'\rvert+ \lvert y-y'\rvert < t  .   
\end{align}
Let $S$ be the square function
\begin{equation*}
  Sf(x) \coloneqq \left( \int_0^\infty \lvert \theta_t f(x) \rvert^2 \frac{\D{t}}{t} \right)^{1/2} .
\end{equation*}
By the $T(1)$ theorem of Christ and Journé \cite{MR906525}
it is known that $S$ is bounded on $L^2(\mathbb{R}^d)$ if
$\theta_t$ applied to the constant function $1$ gives rise to a Carleson measure
 $\nu\coloneqq\lvert\theta_t1(x)\rvert^2 \D{t}/t \D{x}$  on the upper half space $\mathbb{R}^{d+1}_+$.
A Carleson measure on $\mathbb{R}^{d+1}_+$ is a measure which acts like a $d$-dimensional measure in the following sense.
Let $Q$ be a cube in $\mathbb{R}^d$ with sides parallel to the coordinate axes.
Denote by $\ell Q$ and $\lvert Q\rvert$ the side length and the Lebesgue measure of $Q$, so that $(\ell Q)^d = \lvert Q\rvert$.
Consider the Carleson box $B_Q \coloneqq Q \times (0,\ell Q)$.
Then $\nu$ is a Carleson measure if
$\nu(B_Q)/\lvert Q\rvert$ is finite for any cube $Q$.

Let $\1_Q$ be the indicator function on $Q$.
It has been shown \cite{HofmannSurvey,MR2664559,LaceyMartikainen}
that  $S$ is bounded in $L^2(\mathbb{R}^d)$ if there exists a constant $\testingC > 0$ such that
for any cube $Q$
the  following \emph{local} testing condition holds
\begin{equation}\label{eq:testing_condition}\tag{\textsf{T}}
  \int_Q \int_0^{\ell Q} \lvert \theta_t \1_Q(x) \rvert^2 \frac{\D{t}}{t}\D{x}  \le \testingC \lvert Q \rvert .
\end{equation}
A standard example for which these conditions hold is $\theta_t f = f * \psi_t $, where $\psi_t(x) = t^{-d} \psi(t^{-1}x)$
and $\psi$ is the mean zero Schwartz function which gives rise to the Littlewood--Paley square function \cite[\S 6.1]{GrafakosClassical}.
In particular, conditions \eqref{eq:size_condition} and \eqref{eq:smooth_condition} are off-diagonal conditions compatible with the scaling
while \eqref{eq:testing_condition} is the cancellation condition $\int \psi = 0$.

The aim of this paper is to show that this Carleson condition \eqref{eq:testing_condition} is enough to obtain something better: a sparse domination.

\subsection{Main result}
A collection of dyadic cubes $\mathscr{S}$ is $\tau$-sparse if 
for any $Q \in \mathscr{S}$ there exists a subset $E_Q \subset Q$ with the property that
$\{ E_Q \}_{Q\in\mathscr{S}}$ are pairwise disjoint and the ratio $\lvert E_Q \rvert/\lvert Q \rvert \ge \tau$ for a fixed $\tau\in(0,1)$.
  
  Our main result is an optimal sparse domination of $S$ under the minimal condition \eqref{eq:testing_condition}.
  \begin{teo}\label{teo:precise_thm}
    If $S$ satisfies the testing condition \eqref{eq:testing_condition}
    then for any pair of compactly supported functions $f,g\in L^\infty(\mathbb{R}^d)$
    there exists a sparse collection $\mathscr{S}$ such that
    \begin{equation*}
      \Big\lvert \int_{\mathbb{R}^d} (S f)^2 g \D{x}\Big\rvert \le C \sum_{Q \in \mathscr{S}} \left( \frac{1}{\lvert Q\rvert} \int_Q \lvert f \rvert \right)^2 \left(\frac{1}{\lvert Q\rvert} \int_Q \lvert g \rvert \right) \lvert Q \rvert 
    \end{equation*}
    where $C = C(\alpha,d)$ is a positive constant independent of $f$ and $g$.
  \end{teo}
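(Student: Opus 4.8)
Since $(Sf)^2\ge 0$ it suffices to treat $g\ge 0$, and by a routine limiting argument we may assume $f,g$ supported in a common large cube. The plan is first to replace the continuous parameter $t$ by a dyadic one: averaging over a family of random dyadic grids $\{\mathscr{D}_\omega\}_\omega$, to each $(x,t)$ in the upper half-space one associates the grid cube $Q_\omega(x,t)\ni x$ with $\ell(Q_\omega(x,t))\simeq t$, and it is enough to bound the expectation over $\omega$ of the resulting discretized form, after which one fixes a generic grid $\mathscr{D}$ realizing the expectation. The randomness serves two purposes: it turns $\int_0^\infty \tfrac{\D t}{t}$ into a sum over dyadic scales, and it guarantees that, on a positive proportion of grids, the relevant points sit deep inside their grid cubes, which neutralizes the boundary effects that arise when $\theta_t$ is applied to indicators of cubes.

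\textbf{The corona decomposition and the sparse family.} Fix the top cube $Q_0\supset\operatorname{supp}f\cup\operatorname{supp}g$ and build stopping cubes $\mathscr{S}\subset\mathscr{D}$ recursively: $Q_0\in\mathscr{S}$, and the stopping children of $Q\in\mathscr{S}$ are the maximal dyadic $Q'\subsetneq Q$ with $\langle\lvert f\rvert\rangle_{Q'}>C_0\langle\lvert f\rvert\rangle_{Q}$ or $\langle g\rangle_{Q'}>C_0\langle g\rangle_{Q}$. For $C_0$ large the children have total measure $\le\tfrac12\lvert Q\rvert$, so $\mathscr{S}$ is sparse and will be the family in the statement. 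On the corona $\operatorname{Cor}(Q)$ of cubes whose minimal stopping ancestor is $Q$ one has $\langle\lvert f\rvert\rangle_R\le C_0\langle\lvert f\rvert\rangle_{Q}$ and $\langle g\rangle_R\le C_0\langle g\rangle_{Q}$, and moreover $\lvert f\rvert\le C_0\langle\lvert f\rvert\rangle_{Q}$, $\lvert g\rvert\le C_0\langle g\rangle_{Q}$ a.e.\ on the stopping region $Q\setminus\bigcup Q'$. Expanding $f$ in the Haar basis, write $f=\sum_{Q\in\mathscr{S}}b_Q$ with $b_Q=\sum_{R\in\operatorname{Cor}(Q)}\Delta_R f$ supported in $Q$; the preceding facts yield $\lVert b_Q\rVert_\infty\lesssim\langle\lvert f\rvert\rangle_{Q}$ and the local $L^2$ bound $\lVert b_Q\rVert_2^2\lesssim\langle\lvert f\rvert\rangle_{Q}^2\lvert Q\rvert$.

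\textbf{Distributing the scales.} Split the upper half-space into the Whitney regions $W_Q=\bigcup_{R\in\operatorname{Cor}(Q)}R\times[\ell R/2,\ell R)$, so that $\int (Sf)^2 g=\sum_{Q\in\mathscr{S}}\int_{W_Q}\lvert\theta_t f\rvert^2\,g\,\tfrac{\D t}{t}\D x$ (equivalently, one runs the estimate recursively over Carleson boxes, using $B_Q=W_Q\sqcup\bigsqcup_{Q'}B_{Q'}$); the whole theorem is then reduced to the single–corona inequality $\int_{W_Q}\lvert\theta_t f\rvert^2 g\,\tfrac{\D t}{t}\D x\lesssim\langle\lvert f\rvert\rangle_{Q}^2\langle g\rangle_{Q}\lvert Q\rvert$. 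On $W_Q$ one splits $\theta_t f$ into three pieces. The \emph{diagonal} piece $\theta_t b_Q$: since $S$ is bounded on $L^2$ by the $T(1)$ theorem — whose hypothesis is precisely \eqref{eq:testing_condition} — one has $\int_{W_Q}\lvert\theta_t b_Q\rvert^2\tfrac{\D t}{t}\D x\le\lVert S b_Q\rVert_2^2\lesssim\lVert b_Q\rVert_2^2\lesssim\langle\lvert f\rvert\rangle_{Q}^2\lvert Q\rvert$, which is matched with $g$ via $g\le C_0\langle g\rangle_{Q}$ on the stopping region and $\sum_{Q'}\langle g\rangle_{Q'}\lvert Q'\rvert\le\langle g\rangle_{Q}\lvert Q\rvert$. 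The \emph{paraproduct} piece, the scale-$t$ average $\langle f\rangle_{Q(x,t)}\theta_t\mathbbm{1}$, is written against $\theta_t\mathbbm{1}_{Q}$ plus a remainder with decay coming from \eqref{eq:size_condition}, and one uses that $\lvert\theta_t\mathbbm{1}_Q\rvert^2\tfrac{\D t}{t}\D x$ is a Carleson measure with constant $\lesssim\testingC$ (a consequence of \eqref{eq:testing_condition} on dilates of $Q$ together with \eqref{eq:size_condition}); the point is that the scale-$t$ averages of $f$ telescope back to $\langle f\rangle_{Q}$ modulo the Haar pieces already put into $b_Q$, so the accumulated square function of the constants is dominated directly by \eqref{eq:testing_condition}. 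The \emph{error} piece collects the parts of $f$ supported in the smaller stopping cubes $Q'$, which $\theta_t$ sees at scale $t\gtrsim\ell(Q')$ against a mean-zero bump of scale $\le\ell(Q')$; \eqref{eq:smooth_condition} produces decay $(\ell(Q')/t)^{\alpha}$, summable in both the scale and $Q'$, giving $\lesssim\langle\lvert f\rvert\rangle_{Q}^2\sum_{Q'}\lvert Q'\rvert\le\langle\lvert f\rvert\rangle_{Q}^2\lvert Q\rvert$; cross terms among the three pieces are absorbed by Cauchy–Schwarz and the geometric-in-scale decay.

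\textbf{The main obstacle.} The delicate point is the paraproduct term, and precisely the fact that the right-hand side must carry the average $\langle g\rangle_{Q}$ rather than $\lVert g\rVert_\infty$: the measure $\lvert\theta_t\mathbbm{1}_Q\rvert^2\,g\,\tfrac{\D t}{t}\D x$ is \emph{not} Carleson, so $g$ cannot be absorbed into the testing constant. This is exactly why the stopping time is taken with respect to $g$ as well as $f$: the portion of $g$ exceeding $C_0\langle g\rangle_{Q}$ is confined to the stopping children and is reincorporated by the recursion, while on the stopping region $g\le C_0\langle g\rangle_{Q}$ makes \eqref{eq:testing_condition} applicable with the right normalization. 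Making this bookkeeping close — arranging that every term generated at level $Q$ either carries the factor $\langle\lvert f\rvert\rangle_{Q}^2\langle g\rangle_{Q}\lvert Q\rvert$ or is genuinely a term attached to a smaller cube of $\mathscr{S}$ — together with the boundary estimates made possible by the random grids, is the technical heart of the proof. Summing the recursive inequality over $\mathscr{S}$ gives $\int (Sf)^2 g\le C\sum_{Q\in\mathscr{S}}\langle\lvert f\rvert\rangle_{Q}^2\langle g\rangle_{Q}\lvert Q\rvert$, which is the assertion.
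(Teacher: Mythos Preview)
Your corona outline is close in spirit to the paper, but there is a genuine gap in the single--corona inequality that your recursion does not close. The Whitney region $W_Q=\bigcup_{R\in\operatorname{Cor}(Q)} R\times[\ell R/2,\ell R)$ has spatial projection equal to \emph{all} of $Q$, including the stopping children $Q'$. Your recursion passes the integrals $\int_{W_{Q'}}$ to the next generation, but it does \emph{not} pass the part of $\int_{W_Q}$ sitting spatially over $Q'$ --- that part stays at level $Q$ and must be estimated there. On that set $g$ is not bounded by $C_0\langle g\rangle_Q$, so neither your diagonal bound (pairing $\lVert S b_Q\rVert_2^2$ with $g$) nor your paraproduct bound (pairing the Carleson measure $|\theta_t\mathbbm{1}_Q|^2\tfrac{\D t}{t}\D x$ with $g$) goes through: in both cases you would need a \emph{local} estimate over each $Q'$ that you do not have. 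The claim ``$\sum_{Q'}\langle g\rangle_{Q'}|Q'|\le\langle g\rangle_Q|Q|$'' is true but irrelevant, since you cannot factor the integral over $Q'\times(\ell Q',\ell Q)$ as (something)${}\times|Q'|$.

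What is actually needed --- and what the paper does --- is a Calder\'on--Zygmund split $g=a+b$ at height $\sim\langle g\rangle_Q$: the bounded part $a\lesssim\langle g\rangle_Q$ lets the testing condition be applied directly, while the bad part $b$ is a sum of mean-zero pieces supported in the $Q'$. The contribution of $b$ is then controlled by exploiting its zero average against the \emph{$x$-regularity} of the kernel (condition \eqref{eq:smooth_condition} in the first variable): since each $b_{Q'}$ lives at scale $\ell Q'$ and the Whitney time $t\gtrsim\ell Q'$, one gains $(\ell Q'/t)^{\alpha}$, summable across $Q'$ and scales. This is the content of the paper's \cref{sec:term3b} (in particular the factorization in \cref{lemma:factorisation}), and it is precisely the mechanism your proposal lacks --- you invoke smoothness-driven cancellation only for the mean-zero pieces of $f$, never for $g$. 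Without it, the paraproduct term cannot be bounded with $\langle g\rangle_Q$ on the right, which is exactly the obstacle you yourself flag but do not resolve.
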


  \subsubsection{Sharp weighted inequalities}  
  Under condition \eqref{eq:testing_condition}
  the square function $S$ is bounded on the weighted space $L^p(w)$ for $p\in(1,\infty)$,
  provided that $w$ belongs to the $A_p$ class of weights for which %
  \begin{equation*}%
    [w]_{A_p} \coloneqq \sup_Q \left(\frac{1}{\lvert Q\rvert} \int_Q w\right) \left(\frac{1}{\lvert Q\rvert} \int_Q w^{-\frac{1}{p-1}}\right)^{p-1} < \infty .
  \end{equation*}
  
  For $p\in(1,\infty)$ and $w$ in $A_p$, let $\alpha(p)$ be the best exponent in the inequality
  \begin{equation}\label{eq:sharp_weighted_estimates}    
    \sup_{f\neq 0} \frac{\lVert S f \rVert_{L^p(w)}}{\lVert f\rVert_{L^p(w)}} \le C(S,p) [w]_{A_p}^{\alpha(p)} .
  \end{equation}
  When $p=2$,
  Buckley \cite{MR1124164} showed the upper bound $\alpha(2)\le 3/2$.
  Later Wittwer improved it to $\alpha(2)=1$ and %
  showed that it's sharp
  for the dyadic %
  and the continuous square functions \cite[Theorem 3.1--3.2]{MR1897458}.
  The same result was obtained independently by Hukovic, Treil and Volberg using Bellman functions \cite[Theorem 0.1--0.4]{MR1771755}.  

  Andrei Lerner was the first to prove
  that $\alpha(p) = \max\{\frac12, \frac{1}{p-1}\}$ cannot be improved \cite[Theorem 1.2]{lerner2006weightednorm}
  and to conjecture estimate \eqref{eq:sharp_weighted_estimates} for Littlewood--Paley square functions.
  After improving the best known exponent for $p>2$ \cite[Corollary 1.3]{lerner2008some},
  Lerner proved the estimate
  \begin{equation}\label{eq:L^3est}
    \lVert Sf \rVert_{L^3(w)} \le C [w]_{A_3}^{1/2} \lVert f\rVert_{L^3(w)} 
  \end{equation}
  for Littlewood--Paley square functions
  pointwise controlled by the intrinsic square function \cite[Theorem 1.1]{lerner2011sharpLP}.
  Lerner achieved this by applying the local mean oscillation formula
  to a dyadic variant of the Wilson intrinsic square function \cite{WilsonIntrisicSquare}.
  Then the sharp estimate \eqref{eq:sharp_weighted_estimates} for all $1<p<\infty$ follows
  from \eqref{eq:L^3est} by the sharp extrapolation theorem \cite{MR2140200}, see also \cite[Theorem 7.5.3]{GrafakosClassical}.
  A proof of the sharp bound \eqref{eq:sharp_weighted_estimates} for the dyadic square function
  using local mean oscillation can be found in \cite[Theorem 1.8]{SharpClassical}.
  
  While Lerner's result relies on a pointwise control of the square function $S$,
  our \cref{teo:precise_thm} implies the weighted estimate \eqref{eq:L^3est} by duality, and so 
  the estimate \eqref{eq:sharp_weighted_estimates} in the full range with optimal
  dependence on the $A_p$ characteristic.

  Weak type estimates \cite{LaceyScurry} and mixed $A_p-A_\infty$ estimates \cite{LaceyLi_mixedtype,borderlineweaktype}
  for square functions have also been studied
  using sparse domination.
  \newline
  After the solution of the $A_2$ conjecture by Hytönen \cite{A2}, sparse domination has been used to obtain
  a simpler proof of the $A_2$ theorem \cite{SimpleA2,ElementaryA2} %
  and to deduce weighted estimates for a plethora of different operators including:
  Calderón--Zygmund operators \cite{SparseRough,CondeRey_DyadicShift,LernerOnPoitwise,UniformSparse_DiPlinio},
  bilinear Hilbert transform \cite{SparseBHT},
  variational Carleson operators \cite{DiPlinioGena}, oscillatory and random singular integrals \cite{SparseOscilRand},
  pseudodifferential operators \cite{beltran2017sparse},
  Stein's square function \cite{MR3625166}, and singular Radon transforms \cite{OberlinRadonSparse}.

  The sparse paradigm has already been extended beyond the classical Calderón--Zygmund theory
  to control more general bilinear forms \cite{beyondCZ} and
  to obtain weighted estimates for Bochner--Riesz multipliers \cite{BR_Reguera,BRmultiplier}.

  Another take on sparse domination, which inspired this work,
  is the sparse $T1$ theorem for Calderón--Zygmund operators \cite{LaceyMena},
  where Lacey and Mena obtained a sparse domination under a minimal testing assumption.

    \subsection{Structure of the paper}
    In \cref{sec:preliminari} we introduce shifted random dyadic grids and the associated Haar basis.
    Furthermore we use the classical reduction to good cubes.
    In \cref{sec:Decomposition1} we decompose the operator into off-diagonal and diagonal parts.
    These are split further each one into two terms
    \begin{equation*}
      \langle (Sf)^2 , g\rangle \lesssim \underbrace{\eqref{term1} + \eqref{term2}}_{\text{off-diagonal}} + \underbrace{\eqref{term3a} + \eqref{term3b}}_{\text{diagonal}}.
    \end{equation*}
    The off-diagonal part is bounded by a dyadic form
    using standard techniques in \cref{sec:term1} and off-diagonal estimates in \cref{sec:term2}.
    The dyadic form is dominated by a sparse form in \cref{sec:sparse_domination}. 

    Terms \eqref{term3a} and \eqref{term3b}
    come from  a Calderón--Zygmund decomposition $g=a+b$,
    where $a$ is  the average part
    and $b$ is the bad part of $g$.
    
    In \cref{sec:term3a} we introduce the stopping cubes used to control the diagonal part.
    We reduce \eqref{term3a} to a telescopic sum on stopping cubes plus off-diagonal terms.
    We remark that the stopping family depends only on the functions $f$ and $g$.
    Furthermore, the testing condition \eqref{eq:testing_condition} is used only in this section and only once.

    In \cref{sec:term3b} we deal with \eqref{term3b}.
    We exploit the zero average property of $b$    together with
    the regularity of the kernel \eqref{eq:smooth_condition}
    to restore a setting in which off-diagonal estimates can be applied as in the previous sections, see \cref{subsec:recover_decay}.
    
    In \cref{sec:proofs_for_reduction} we collect some of the proofs postponed to ease the reading. 
    In \cref{apx:old_tricks} we recall some known results about
    conditional expectations and Haar projections which are used in \cref{sec:term3b}.

    \subsection*{Notation}
      For two positive quantities $X$ and $Y$ the notation $X \lesssim Y$ means that there exists a constant $C>0$
      such that $X \le C Y$. The dependence of $C$ on other parameters
      will be indicated by subscripts $X \lesssim_{d,r,\alpha} Y$ when appropriate.

      Given a cube $Q$ in $\mathbb{R}^d$, the quantities $\partial Q$, $\ell Q$ and $\lvert Q \rvert$
      denote, respectively, boundary, size length, and the Lebesgue measure of $Q$.
      We also denote by $3Q$ the (non-dyadic) cube with the same centre of $Q$ and side length $3\ell Q$.

      The average of a function $f$ over a cube $Q$ will be denoted by
      \begin{equation*}
        \langle f\rangle_Q \coloneqq \fint_Q f \coloneqq \frac{1}{\lvert Q\rvert} \int_Q f(y) \D{y}.
      \end{equation*}

      We consider $\mathbb{R}^d$ with the $\ell^\infty$ metric $\lvert x \rvert = \max_{i} \lvert x_i\rvert$.
      The distance between two cubes $P$ and $R$ will be denoted by $\operatorname{d}(P,R)$,
      while
      \begin{equation*}
        D(P,R) \coloneqq  \ell P + \operatorname{d}(P,R) + \ell R
      \end{equation*}
      is the ``long distance'', as defined in \cite[Definition 6.3]{TbNTV}.

    \section{Preliminaries}\label{sec:preliminari}
   
      \subsection{Dyadic cubes}
      The standard dyadic grid $\mathcal{D}$ on $\mathbb{R}^d$
      is a collection of nested cubes organised in generations
      \begin{equation*}
        \mathcal{D}_j \coloneqq \{ 2^{-j}([0,1)^d + m), m \in \mathbb{Z}^d \} .
      \end{equation*}
      Each generation $\mathcal{D}_j$ is a partition of the whole space
      and   $\mathcal{D} = \cup_{j\in\mathbb{Z}} \mathcal{D}_j$.
      Any cube $Q\in\mathcal{D}_j$ has $2^d$ children in $\mathcal{D}_{j+1}$ and one parent in $\mathcal{D}_{j-1}$.
      For $k\in\mathbb{N}$ we denote by $Q^{(k)}$ the $k$-ancestor of $Q$, that is the unique cube $R$ in the same grid $\mathcal{D}$
      such that $R \supset Q$ and $\ell R = 2^k \ell Q$. We also denote by $\ch_k(Q)$ the set of the $k$-grandchildren of $Q$, so that
      if $P\in \ch_k(Q)$ then $P^{(k)} = Q$.

      \subsection{Haar functions}       %
      Given a dyadic system $\mathcal{D}$ on $\mathbb{R}^d$,
      Haar functions are an orthonormal basis of $L^2(\mathbb{R}^d)$ given by
      linear combinations of indicator functions supported on cubes in $\mathcal{D}$.

      On $\mathbb{R}$, for a given interval $I\in\mathcal{D}$
      let $I^-$ and $I^+$ be the left and the right dyadic child of $I$.
      Consider the functions $h_I^0 \coloneqq \lvert I\rvert^{-1/2} \1_{I}$
      and $h_I^1 \coloneqq ( \1_{I^-} - \1_{I^+} ) \lvert I\rvert^{-1/2}$.
      Then $\{ h_I^1 \}_{I\in\mathcal{D}}$ is an orthonormal complete system of $L^2(\mathbb{R})$.
      In higher dimensions, as a cube $I$ is the product of intervals $I_1 \times \dots \times I_d$,
      a non-constant Haar function $h_I^\epsilon $ is the product $h_{I_1}^{\epsilon_1} \times \dots\times h_{I_d}^{\epsilon_d}$, where $\epsilon = (\epsilon_i)_i \in \{0,1\}^d \setminus \{0\}^d$. 
      
      A function $f$ in $L^2$ can be written in the Haar basis:
      \begin{align*}
        f & = \sum_{I\in\mathcal{D}} \sum_{\epsilon \in \{0,1\}^d \setminus \{0\}^d}\langle f,h_I^\epsilon\rangle h_I^\epsilon \\
          & =  \sum_{I\in\mathcal{D}} \sum_{J \in \ch_1(I)} \left(\langle f \rangle_{J} - \langle f \rangle_I \right) \1_{J} \eqqcolon  \sum_{I\in\mathcal{D}} \Delta_If .
      \end{align*}         
      In this paper the sum over $\epsilon$ is not important, so both the superscript and the sum will be omitted and
      $h_I$ will denote a non-constant Haar function.
      Two bounds that will be used are
      \begin{equation}\label{eq:bounds_L1_L_infinity}
        \lVert \Delta_I f\rVert_{L^1} \le \lvert\langle f,h_I\rangle\rvert \lvert I \rvert^{1/2} \le \int_I \lvert f\rvert, \qquad \lVert \Delta_I f\rVert_{L^\infty} \le \lvert\langle f,h_I\rangle\rvert \lvert I \rvert^{-1/2} \le \fint_I \lvert f\rvert .
      \end{equation}

      \subsection{Good and bad cubes}
      A cube is called \emph{good} if it is distant from the boundary of any much larger cube.
      More precisely, we have the following
      \begin{define}[Good cubes]
        Given two parameters $r \in \mathbb{N}$ and $\gamma \in (0,\frac12)$,
        a cube $R \in \mathcal{D}$ is $r$-good if $\operatorname{d}(R,\partial P)>(\ell R)^\gamma (\ell P)^{1-\gamma}$
        for any  $P \in \mathcal{D}$      with $\ell P \ge 2^r \ell R$.
      \end{define}
      A cube which is not good is a bad cube.

      It is useful to fix $\gamma = \alpha/(4\alpha+4d)$. This is just a convenient choice and
       any other  value of $\gamma$  strictly between $0$ and $\alpha/(2\alpha+2d)$ would work as well.

      \subsection{Shifted dyadic cubes}    
      Given a sequence $\omega = \{\omega_i\}_{i\in\mathbb{Z}} \in (\{0,1\}^d)^\mathbb{Z}$ and a cube $R\in\mathcal{D}_j$ of length $2^{-j}$,
      the translation of $R$ by $\omega$ is defined by
      \begin{equation*}
        R \dot{+} \omega \coloneqq R + x_j \qquad \text{ where } \quad x_j \coloneqq \sum_{i > j} \omega_i 2^{-i} .
      \end{equation*}
      For a fixed $\omega$, let $\mathcal{D}^\omega$ be the collection of dyadic cubes in $\mathcal{D}$ translated by $\omega$.
      The standard dyadic grid corresponds to $\mathcal{D}^0$ where $\omega_i = 0$ for all $i\in\mathbb{Z}$.
      Shifted dyadic grids enjoy the same nested properties of the standard grid $\mathcal{D}^0$,
      together with other properties that will be useful later, see \cref{remark:common_ancestor}.
      For more on dyadic grids, we refer the reader to the beautiful survey \cite[\S 3]{pereyra2018dyadic}.    

      \subsection{Random shifts}\label{subsec:randomgrids}
      Let $\mathbb{P}$ be the unique probability measure on $\Omega \coloneqq (\{0,1\}^d)^\mathbb{Z} $ such that
      the coordinate projections are independent and uniformly distributed.
      Fix $R\in\mathcal{D}^0$ with $\ell R = 2^{-j}$ and consider $J\in\mathcal{D}^0$ with $\ell J > \ell R$.
      The translated cube $J\dot{+}\omega$ is 
      \begin{align*}
        J\dot{+}\omega &= J + \sum_{2^{-i} < \ell R} \omega_i 2^{-i} + \sum_{\ell R \le 2^{-i} < \ell J}  \omega_i 2^{-i} ,\\
        R\dot{+}\omega &= R + \sum_{2^{-i} < \ell R} \omega_i 2^{-i} .
      \end{align*}
      The position of $R\dot{+}\omega$ depends on the $i$ such that $2^{-i} < \ell R$
      while the goodness of $R\dot{+}\omega$, since $R$ and $J$ are translated by the same $\omega$,
      depends on the $i$ such that $2^{-i} \ge \ell R$.
      Then position and goodness of a cube are independent random variables, see \cite{A2}.

      Let $\1_{\text{good}}$ be the function on $\mathcal{D}^\omega$ which takes value $0$ on bad cubes and $1$ on good cubes.
      The probability of a cube $R$ to be good is $\pi_{\text{good}} = \mathbb{P}( R \dot{+} \omega \, \text{is good}) = \mathbb{E}_\omega [\1_{\text{good}}(R \dot{+} \omega)]$,
      where $\mathbb{E}_\omega$ is the expectation with respect to $\mathbb{P}$. %
      The probability $\pi_{\text{good}} > 0$ provided to choose $r$ large enough, see \cite[Lemma 2.3]{RepresentationTH}.
      The indicator function $\1_{R\dot{+}\omega}(\,\cdot\,)$ depends only on the position of $R\dot{+}\omega$, so by the independence of
      goodness and position, for any cube $R\in\mathcal{D}^0$ we have
      \begin{equation}\label{eq:independence}
        \mathbb{E}_\omega [\1_{\text{good}}(R\dot{+}\omega) ] \cdot \mathbb{E}_\omega [\1_{R\dot{+}\omega}(\,\cdot\,)] = \mathbb{E}_\omega [\1_{\{R \dot{+} \omega \text{ good}\}}(\,\cdot\,)] .
      \end{equation}

      \subsection{Calderón--Zygmund decomposition on dyadic grandchildren}
      Let $R$ be a dyadic cube. For $r\in \mathbb{N}$ we denote by $R_r$ a $r$-dyadic child of $R$ in $\ch_r(R)$, so that $R_r^{(r)} = R$.
      \begin{prop}[Calderón--Zygmund decomposition on $r$-grandchildren]\label{prop:C-Z_r}
        Let $r\in\mathbb{N}$ and $f$ be a function in $L^1(\mathbb{R}^d)$.
        For any $\lambda >0$ there exists a collection of maximal dyadic cubes $\mathcal{L}$ and
        two functions $a$ and $b$ 
        such that $f = a + b$, with  $\lVert a \rVert_{L^\infty} \le 2^{d(r+1)} \lambda$ and
        \begin{equation*}
          b \coloneqq \sum_{L\in\mathcal{L}} \sum_{L_{r}\in \ch_r(L)} b_{L_{r}}, \quad \text{ where } \quad b_{L_{r}} \coloneqq \Big( f - \langle f\rangle_{L_{r}} \Big) \1_{L_{r}} .
        \end{equation*}
      \end{prop}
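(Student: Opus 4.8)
The plan is to run the usual Calderón--Zygmund stopping-time construction at height $\lambda$ and then to peel off the bad part on the $r$-th generation descendants of the stopping cubes rather than on the stopping cubes themselves; the only cost of this refinement is an extra factor $2^{dr}$ in the $L^\infty$ bound for $a$.

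First I would build the stopping family. Since $f \in L^1(\mathbb{R}^d)$, any dyadic cube $Q$ with $\langle \lvert f\rvert\rangle_Q > \lambda$ satisfies $\lvert Q\rvert < \lambda^{-1}\lVert f\rVert_{L^1}$, while the averages of $\lvert f\rvert$ over the dyadic cubes containing a fixed point tend to $0$ as their side length grows; hence every such $Q$ is contained in a \emph{maximal} dyadic cube with the same property. Let $\mathcal{L}$ be the collection of these maximal cubes. Being maximal dyadic cubes they are pairwise disjoint, and the dyadic parent $L^{(1)}$ of any $L \in \mathcal{L}$ fails the defining inequality, so
\begin{equation*}
  \langle \lvert f\rvert\rangle_L \le \frac{1}{\lvert L\rvert}\int_{L^{(1)}} \lvert f\rvert = 2^d \langle \lvert f\rvert\rangle_{L^{(1)}} \le 2^d \lambda .
\end{equation*}
Writing $\Omega \coloneqq \bigcup_{L\in\mathcal{L}} L$, every dyadic cube containing a point of $\mathbb{R}^d\setminus\Omega$ has average $\le\lambda$, so the dyadic Lebesgue differentiation theorem gives $\lvert f\rvert \le \lambda$ almost everywhere on $\mathbb{R}^d \setminus \Omega$.

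Next I would set $b \coloneqq \sum_{L\in\mathcal{L}} \sum_{L_r \in \ch_r(L)} b_{L_r}$ with $b_{L_r}$ as in the statement, and $a \coloneqq f - b$. Since $\{L_r : L\in\mathcal{L},\ L_r \in \ch_r(L)\}$ partitions $\Omega$, one has
\begin{equation*}
  a = f\1_{\mathbb{R}^d\setminus\Omega} + \sum_{L\in\mathcal{L}}\sum_{L_r\in\ch_r(L)} \langle f\rangle_{L_r}\1_{L_r},
\end{equation*}
so that $f = a + b$, and each $b_{L_r}$ is supported on $L_r$ with $\int_{L_r} b_{L_r} = 0$. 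It then remains to estimate $a$ pointwise: on $\mathbb{R}^d\setminus\Omega$ we have $\lvert a\rvert = \lvert f\rvert \le \lambda$, while on a cube $L_r\in\ch_r(L)$, using $\lvert L_r\rvert = 2^{-dr}\lvert L\rvert$,
\begin{equation*}
  \lvert a\rvert = \lvert\langle f\rangle_{L_r}\rvert \le \frac{1}{\lvert L_r\rvert}\int_{L_r}\lvert f\rvert \le \frac{2^{dr}}{\lvert L\rvert}\int_{L}\lvert f\rvert = 2^{dr}\langle \lvert f\rvert\rangle_L \le 2^{d(r+1)}\lambda ,
\end{equation*}
which is the asserted bound. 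This is entirely routine; there is no real obstacle, the only point needing a little care being the existence of the maximal stopping cubes, which is precisely where the hypothesis $f \in L^1$ enters.
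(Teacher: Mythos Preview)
Your proof is correct and follows essentially the same approach as the paper: build the maximal stopping family $\mathcal{L}$ at height $\lambda$, define $a$ and $b$ exactly as you do, and bound $\lvert\langle f\rangle_{L_r}\rvert$ by passing to the parent $L^{(1)}$ of $L$ (the paper does this in one step, $\lvert\langle f\rangle_{L_r}\rvert \le \lvert L^{(1)}\rvert/\lvert L_r\rvert\cdot\langle\lvert f\rvert\rangle_{L^{(1)}} \le 2^{d(r+1)}\lambda$, while you factor it as $2^{dr}\langle\lvert f\rvert\rangle_L \le 2^{d(r+1)}\lambda$). Your treatment of the existence of the maximal cubes and the bound on $\mathbb{R}^d\setminus\Omega$ is in fact slightly more careful than the paper's.
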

      \begin{remark}
        When $r=0$, this is the usual Calderón--Zygmund decomposition of $f$, see \cite[Theorem 5.3.1]{GrafakosClassical}.
      \end{remark}
      \begin{proof}        
        Given $\lambda>0$, let $\mathcal{L}$ be the collection of maximal dyadic cubes $L$ covering the set
        \begin{equation*}
          E \coloneqq \Big\{ x \in \mathbb{R}^d\,:\, \sup_{Q\in\mathcal{D}} \langle\lvert f\rvert\rangle_Q \1_Q(x) > \lambda \Big\} = \bigcup_{L \in \mathcal{L}} L
        \end{equation*}
        so that
         $\langle\lvert f\rvert\rangle_L \in (\lambda,2^d \lambda]$ for each $L \in\mathcal{L}$.
        Let
        \begin{equation*}
          a \coloneqq f \1_{E^\complement} + \sum_{L\in\mathcal{L}} \sum_{L_{r}\in \ch_r(L)} \langle f \rangle_{L_{r}} \1_{L_{r}} , \qquad b\coloneqq f - a.
        \end{equation*}
        The cubes in $\ch_r(L)$ are a partition of $L$. Since the cubes $L$ in $\mathcal{L}$ are disjoint, we have
        \begin{equation*}
          \lVert a \rVert_{L^\infty} \le  \lambda + \sup_{L\in\mathcal{L}} \sup_{L_{r}\in \ch_r(L)} \lvert \langle f \rangle_{L_{r}} \rvert.
        \end{equation*}
        Let $L^{(1)}$ be the dyadic parent of $L$.
        Then the average of $f$ is controlled by
        \begin{equation*}
          \Big\lvert \frac{1}{\lvert L_r\rvert} \int_{L_{r}} f \Big\rvert \le \frac{\lvert L^{(1)} \rvert}{\lvert L_{r} \rvert} \fint_{L^{(1)}} \lvert f \rvert \le 2^{d(r+1)} \lambda .
        \end{equation*}
      \end{proof}

    \section{Decomposition and good reduction}\label{sec:Decomposition1}
    For any fixed $\omega \in\Omega$  the upper half space $\mathbb{R}^{d+1}_+$ can be decomposed
    in the Whitney regions
    \begin{equation*}
      W_R \coloneqq R \times \left[ \frac{\ell R}2, \ell R \right), \quad R \in \mathcal{D}^\omega .
    \end{equation*}
    Thus we can write
    \begin{equation*}
      \langle (Sf)^2,g\rangle = \iint_{\mathbb{R}^{d+1}_+} \lvert \theta_t f(x)\rvert^2 \frac{\D{t}}t g(x) \D{x}
      = \sum_{R\in\mathcal{D}^\omega} \iint_{W_R} \lvert \theta_t f(x)\rvert^2 \frac{\D{t}}t g(x) \D{x} .
    \end{equation*}
    Then we decompose $f = \sum_{P\in\mathcal{D}^\omega} \Delta_P f$. Given $R\in\mathcal{D}^\omega$, we distinguish
    two collections of $P$:
    \begin{equation*}
      \mathcal{P}^\omega_{R} \coloneqq \{P\in\mathcal{D}^\omega \,:\, P\supset R^{(r)} \}, \quad \text{ and } \quad \mathcal{D}^\omega \setminus \mathcal{P}^\omega_{R} .
    \end{equation*}
    We shall sometimes omit the superscript $\omega$ in the following.
    Bound the operator: 
    \begin{align}
      \sum_{R\in\mathcal{D}} \iint_{W_R} \lvert \theta_t f(x)\rvert^2 \frac{\D{t}}t g \D{x} 
      & \le 2 \sum_{R\in\mathcal{D}} \iint_{W_R} \Big( \big\lvert\sum_{P\in\mathcal{D} \setminus \mathcal{P}_{R}} \theta_t \Delta_Pf \big\rvert^2 + \big\lvert \sum_{P\in\mathcal{P}_{R}} \theta_t \Delta_Pf \big\rvert^2 \Big) \lvert g\rvert \frac{\D{t}}t \D{x} \label{eq:1st_splitting}.
    \end{align}
    Consider the second term in \eqref{eq:1st_splitting}.
    Let $P_R$ be the dyadic child of $P$ containing $R$.
    Then $\Delta_P f \1_P = \Delta_P f \1_{P\setminus P_R} + \langle \Delta_P f\rangle_{P_R} \1_{P_R}$
    and we split the operator accordingly as before to obtain:
    \begin{align}
      \sum_{R\in\mathcal{D}} & \iint_{W_R} \lvert \theta_t f(x)\rvert^2 \frac{\D{t}}t g \D{x} \lesssim  \nonumber \\
      & \sum_{R\in\mathcal{D}} \iint_{W_R} \Big\lvert\sum_{P\in\mathcal{D} \setminus \mathcal{P}_{R}} \theta_t \Delta_Pf \Big\rvert^2 \lvert g\rvert \frac{\D{t}}t \D{x} \label{term1} \tag{\roman{1}} \\
      + & \sum_{R\in\mathcal{D}} \iint_{W_R} \Big\lvert\sum_{P\in\mathcal{P}_{R}} \theta_t \Delta_P f \1_{P\setminus P_R} \Big\rvert^2 \lvert g\rvert \frac{\D{t}}t \D{x} \label{term2} \tag{\roman{2}} \\ 
      + & \sum_{R\in\mathcal{D}} \iint_{W_R} \Big\lvert \sum_{P\in\mathcal{P}_{R}} \theta_t \langle \Delta_P f\rangle_{P_R} \1_{P_R} \Big\rvert^2 \lvert g\rvert \frac{\D{t}}t \D{x} \label{term3} \tag{\roman{3}}.
    \end{align}
    In each term, without loss of generality, we can assume $g$ to be supported on $R$.
    We write $\lvert g\rvert = a + b$ using the Calderón--Zygmund decomposition in \cref{prop:C-Z_r} at height $\lambda= A\langle\lvert g\rvert\rangle_R$ for $A>1$.
    Then the bad part $b$ is decomposed in the Haar basis.
    \begin{align}
      \eqref{term3} =
      & \sum_{R\in\mathcal{D}} \iint_{W_R} \Big\lvert\sum_{P\in\mathcal{P}_{R}} \theta_t\langle \Delta_P f\rangle_{P_R} \1_{P_R} \Big\rvert^2 \frac{\D{t}}t a(x) \D{x}  \label{term3a}\tag{$\roman{3}_a$}\\
      & + \sum_{R\in\mathcal{D}} \iint_{W_R} \Big\lvert \sum_{P\in\mathcal{P}_{R}} \theta_t\langle \Delta_P f\rangle_{P_R} \1_{P_R} \Big\rvert^2 \sum_{\substack{Q\in\mathcal{D}\\ Q\subset R}} \Delta_Q b(x) \frac{\D{t}}t \D{x}.       \label{term3b}\tag{$\roman{3}_b$}
    \end{align}

    \subsection{Good reduction}
    Averaging over all dyadic grids $\mathcal{D}^\omega$   we have
    \begin{align*}
      \iint_{\mathbb{R}^{d+1}_+} \lvert\theta_t f\rvert^2 \lvert g\rvert \frac{\D{t}}t \D{x} &= \mathbb{E}_\omega \sum_{R\in\mathcal{D}^\omega} \iint_{W_R}\lvert\theta_t f\rvert^2 \lvert g\rvert \frac{\D{t}}t \D{x} \\    
                                                                                        & \lesssim \mathbb{E}_\omega \big[\roman{1} + \roman{2} + \roman{3} \big]
                                                                                          = \mathbb{E}_\omega \big[\roman{1} + \roman{2} + \roman{3}_a\big] + \mathbb{E}_\omega\big[\roman{3}_b\big] 
    \end{align*}
    because all the integrands are non-negative
    and the expectation $\mathbb{E}_\omega$ is linear.

    By using the identity \eqref{eq:independence} and writing $1$ as $\pi_{\text{good}}^{-1}\mathbb{E}_\omega [\1_{\text{good}}(\,\cdot\, \dot{+} \omega)]$,
    one can turn a sum over all cubes in $\mathcal{D}^\omega$ into a sum over good cubes, in particular:
    \begin{gather}\label{eq:good_cubes_reduction}
      \mathbb{E}_\omega \big[\roman{1} + \roman{2} + \roman{3}_a \big]
      = \pi_{\text{good}}^{-1} \mathbb{E}_\omega \big[ \1_{\text{good}}(R \dot{+} \omega) \big(\roman{1} + \roman{2} + \roman{3}_a \big) \big] ,\\
      \mathbb{E}_\omega \big[\roman{3}_b \big] 
      = \pi_{\text{good}}^{-1} \mathbb{E}_\omega \big[ \1_{\text{good}}(Q \dot{+} \omega) \big(\roman{3}_b \big) \big] \nonumber.
    \end{gather}    
    We refer the reader to \cite[\S 2.2]{Squarefungeneralmeasures} for an expanded version of \eqref{eq:good_cubes_reduction}
    with $g \equiv 1$.
    
    From now on, the cubes $Q$ in \eqref{term3b} and the cubes $R$ in all other cases are considered to be good cubes.
    The superscript in $\mathcal{D}^\omega$, as well as the expectation $\mathbb{E}_\omega$ and the probability $\pi_{\text{good}}$ will be omitted.

    \section{Reduction of \texorpdfstring{\eqref{term1}}{(I)} to a dyadic form}\label{sec:term1}
    We start by showing that
    \begin{equation*}
      \eqref{term1} = \sum_{\substack{R\in\mathcal{D}\\R \text{ good}}} \iint_{W_R} \big\lvert\sum_{P\in\mathcal{D} \setminus \mathcal{P}_{R}} \theta_t \Delta_Pf \big\rvert^2 \lvert g\rvert \frac{\D{t}}t \D{x}
      \lesssim \sum_{j\in\mathbb{N}} 2^{-c j} B_j^{\mathcal{D}}(g,f)
    \end{equation*}
    for $c>0$, where $B_j^{\mathcal{D}}(g,f)$ is the dyadic form given by
    \begin{equation} \label{eq:def_B_j}
      B_j^{\mathcal{D}}(g,f) \coloneqq  \sum_{K \in \mathcal{D}} \langle\lvert g\rvert\rangle_{3K}  \sum_{\substack{P\in\mathcal{D}\\P \subset 3K \\ \ell P = 2^{-j}\ell K}} \langle f , h_P \rangle^2 .
    \end{equation}    
    We remark that the function $g$ barely plays any role in this section.
    
    \subsection{Different cases for \texorpdfstring{$P$}{P}}\label{subsec:different_cases}
    Given $R \in \mathcal{D}$, the cubes $P$ are grouped
    according to their length and position with respect to $R$.
    \begin{table}[H]
      \caption{Different cases for \(P\) given \(R\) according to their lengths (first row) and position.}
      \begin{tabular}{|c|c|c|c|c|c|c|c|}
        \multicolumn{4}{l|}{\hspace{1cm}$\ell P \ge 2^{r+1} \ell R$} & \multicolumn{2}{c|}{$\ell R\le \ell P \le 2^{r} \ell R$} & \multicolumn{2}{c}{$\ell P < \ell R$} \\
        \cline{1-1} \cline{3-8}
        $P \supset R$ & & \multicolumn{2}{|c|}{$P \not\supset R$} & & & \multicolumn{2}{c|}{} \\ 
                                                        & & \multicolumn{2}{c|}{} & & & \multicolumn{2}{c|}{$\mathcal{P}_{\text{subscale}}$} \\            
                                                        & & $3P\setminus P \supset R$ & $3P \not\supset R$ & $3P \not\supset R$ & $3P \supset R$ & $P \subset 3R$ & $P \not\subset 3R$ \\
                                                                    & & $\mathcal{P}_{\text{near}}$ & \multicolumn{2}{c|}{$\mathcal{P}_{\text{far}}$} & $\mathcal{P}_{\text{close}}$ & inside & far \\[-2ex]
                                                                    & &  \multicolumn{6}{c|}{} \\ 
        $\mathcal{P}_{R}$ & & \multicolumn{6}{c|}{$\mathcal{D} \setminus \mathcal{P}_{R}$} \\
        \cline{1-1} \cline{3-8}
      \end{tabular}
      \label{tab:cases}
    \end{table}
    \begin{remark}
      Since $3P$ is the union of $3^d$ cubes in $\mathcal{D}$,
      the condition $3P \not\supset R$ is equivalent to $3P \cap R =\emptyset$, which implies that $\operatorname{d}(P,R) > \ell P$.
      The condition $\ell P \ge 2^{r+1}\ell R$ allows to exploit the goodness of $R$ also with dyadic children of $P$.
    \end{remark}

    We decompose the sum over $P\in\mathcal{D}\setminus\mathcal{P}_{R}$ in four terms.
    \begin{flalign}
      \sum_{R\in\mathcal{D}} \iint_{W_R} \Big\lvert \sum_{P\in\mathcal{D} \setminus \mathcal{P}_{R}} \theta_t (\Delta_P f) \Big\rvert^2 \lvert g\rvert \frac{\D{t}}t \D{x} %
      \lesssim & \sum_{R\in\mathcal{D}} \iint_{W_R} \Big\lvert \sum_{\substack{P\,:\,\ell P> 2^r \ell R\\ 3P\setminus P\supset R}} \theta_t(\Delta_P f) \Big\rvert^2 \lvert g \rvert \frac{\D{t}}{t} \D{x} \tag{near}\label{near}\\
      & + \sum_{R\in\mathcal{D}} \iint_{W_R} \Big\lvert \sum_{\substack{P \,:\, \ell P \ge \ell R \\ \operatorname{d}(P,R)>\ell P}} \theta_t(\Delta_P f) \Big\rvert^2 \lvert g \rvert \frac{\D{t}}{t} \D{x} \tag{far} \label{far}\\
      & + \sum_{R\in\mathcal{D}} \iint_{W_R} \Big\lvert \sum_{\substack{P \,:\, 3P\supset R \\ \ell R\le \ell P \le 2^r \ell R}} \theta_t(\Delta_P f) \Big\rvert^2 \lvert g \rvert \frac{\D{t}}{t} \D{x} \tag{close}\label{close}\\        
      & + \sum_{R\in\mathcal{D}} \iint_{W_R} \Big\lvert \sum_{P \,:\, \ell P < \ell R} \theta_t(\Delta_P f) \Big\rvert^2 \lvert g \rvert \frac{\D{t}}{t} \D{x} \tag{subscale}\label{subscale}.
    \end{flalign}

    \subsection{Estimates case by case}
    We start with a well--known bound.
    \begin{lemma}\label{lemma:LM}
      Let $P,R \in \mathcal{D}$ with $R$ good. If one of the following conditions holds
      \begin{enumerate}
      \item  $\ell P \ge \ell R$ and $P$ and $R$ are disjoint;
      \item $\ell P < \ell R$;
      \end{enumerate}
      then for $(x,t) \in W_R$ we have
      \begin{equation*}%
        \lvert \theta_t (\Delta_Pf)(x)\rvert \lesssim \frac{(\sqrt{\ell R \ell P})^\alpha}{D(R,P)^{\alpha+d}} \lVert \Delta_Pf \rVert_{L^1} .
      \end{equation*}
    \end{lemma}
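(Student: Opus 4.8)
The plan is to write $\theta_t(\Delta_P f)(x)=\int_P k_t(x,y)\,\Delta_P f(y)\,\D y$, since $\Delta_P f$ is supported on $P$, and then play the size and regularity conditions against each other depending on whether $P$ is smaller or larger than $R$; recall that $(x,t)\in W_R$ forces $x\in R$ and $\ell R/2\le t<\ell R$. In case~(1) one has $t^\alpha\le(\ell R)^\alpha\le(\sqrt{\ell R\,\ell P})^\alpha$, and in case~(2) a factor $(\ell P)^\alpha\le(\sqrt{\ell R\,\ell P})^\alpha$ will appear, so the numerator of the claimed bound is cheap: the whole game is to produce $D(R,P)^{\alpha+d}$ in the denominator.

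For case~(2), $\ell P<\ell R$, I would use the cancellation of $\Delta_P f$. It has vanishing integral over $P$, so for the centre $c_P$ of $P$ we may subtract $k_t(x,c_P)$ and write $\theta_t(\Delta_P f)(x)=\int_P\big(k_t(x,y)-k_t(x,c_P)\big)\Delta_P f(y)\,\D y$. For $y\in P$ one has $|y-c_P|\le\tfrac12\ell P\le\tfrac14\ell R<t$, so the side condition of \eqref{eq:smooth_condition} is satisfied (with $x'=x$, $y'=c_P$) and it gives $|k_t(x,y)-k_t(x,c_P)|\lesssim(\ell P)^\alpha(t+|x-y|)^{-\alpha-d}$. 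Since $t\gtrsim\ell R\ge\ell P$ and $|x-y|\ge\operatorname{d}(R,P)$, one gets $t+|x-y|\gtrsim\ell R+\ell P+\operatorname{d}(R,P)=D(R,P)$; integrating in $y$ turns $\int_P|\Delta_P f|$ into $\lVert\Delta_P f\rVert_{L^1}$ and finishes this case. Note it uses neither the size condition nor goodness.

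For case~(1), $\ell P\ge\ell R$ with $P\cap R=\emptyset$, I would use the size condition \eqref{eq:size_condition}: as $|x-y|\ge\operatorname{d}(R,P)$ we get $|\theta_t(\Delta_P f)(x)|\lesssim t^\alpha\,(t+\operatorname{d}(R,P))^{-\alpha-d}\lVert\Delta_P f\rVert_{L^1}$, and since $t^\alpha\le(\sqrt{\ell R\,\ell P})^\alpha$ it suffices to show $t+\operatorname{d}(R,P)\gtrsim D(R,P)$; using $t\ge\ell R/2$ this reduces to $\max(\ell R,\operatorname{d}(R,P))\gtrsim\ell P$. This holds outright if $\operatorname{d}(R,P)\ge\ell P$, and also if $\ell P<2^{r}\ell R$ (with an $r$-dependent constant). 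The only delicate configuration is $\operatorname{d}(R,P)<\ell P$ and $\ell P\ge 2^{r}\ell R$: here $P$ is much larger than $R$ yet near it, $D(R,P)\le 3\ell P$, and goodness of $R$ is decisive — since $R$ is disjoint from $P$, $\operatorname{d}(R,P)=\operatorname{d}(R,\partial P)>(\ell R)^\gamma(\ell P)^{1-\gamma}$. I would then estimate $t^\alpha(t+\operatorname{d}(R,P))^{-\alpha-d}\le(\ell R)^\alpha\big((\ell R)^\gamma(\ell P)^{1-\gamma}\big)^{-\alpha-d}$ and check, after pulling out the common homogeneity of degree $-d$, that this is $\lesssim(\ell R\,\ell P)^{\alpha/2}(\ell P)^{-\alpha-d}\approx(\ell R\,\ell P)^{\alpha/2}D(R,P)^{-\alpha-d}$: the surviving power of $\ell R/\ell P\le 1$ has exponent $\tfrac{\alpha}{2}-\gamma(\alpha+d)\ge 0$ exactly because $\gamma\le\frac{\alpha}{2(\alpha+d)}$, and our choice $\gamma=\frac{\alpha}{4(\alpha+d)}$ lies safely below.

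The main obstacle is precisely this last ``near'' configuration — a large $P$ hugging the boundary of a small $R$: it is the one place where the geometric gain from goodness is indispensable, and it is what pins down the admissible range of $\gamma$, hence the choice $\gamma=\alpha/(4\alpha+4d)$. Every other estimate is routine size/regularity bookkeeping together with the elementary $\min(\ell R,\ell P)\le\sqrt{\ell R\,\ell P}\le\max(\ell R,\ell P)$.
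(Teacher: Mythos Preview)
Your proposal is correct and follows essentially the same approach as the paper: cancellation of $\Delta_P f$ together with the regularity condition \eqref{eq:smooth_condition} for $\ell P<\ell R$, and the size condition \eqref{eq:size_condition} for $\ell P\ge\ell R$ with the goodness of $R$ supplying the missing decay in the ``near'' configuration $\ell P\ge 2^r\ell R$, $\operatorname{d}(R,P)<\ell P$. The only cosmetic difference is that the paper splits case~(1) into two explicit sub-cases ($\ell P>2^r\ell R$ and $\ell R\le\ell P\le 2^r\ell R$) rather than folding the comparable-lengths case into the main argument as you do.
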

    
    The proof uses the goodness of $R$ in case $(1)$ and the zero average of $\Delta_Pf$ in case $(2)$,
    see also \cite[\S 5]{LaceyMartikainen},\cite[\S 2.4]{Squarefungeneralmeasures}.
    Details of the proof are deferred to \cref{sec:proofs_for_reduction}.
    \newline
    We apply \cref{lemma:LM} for $P$ in $\mathcal{P}_i$ with $i \in \{$near, far, close, subscale$\}$
    and estimate $\lVert \Delta_P f\rVert_{L^1}$ as in \eqref{eq:bounds_L1_L_infinity}.
    Then we apply Cauchy--Schwarz in $\ell^2$.
    \begin{gather}\label{eq:First_Cauchy_Schwarz}
      \sum_{R \in \mathcal{D}} \iint_{W_R} \Big\lvert \sum_{P\in\mathcal{P}_{i}} \theta_t(\Delta_P f) \Big\rvert^2 \lvert g\rvert \frac{\D{t}}{t} \D{x} 
      \lesssim \sum_{R \in \mathcal{D}} \iint_{W_R} \left( \sum_{P\in\mathcal{P}_{i}} \lvert \langle f , h_P \rangle\rvert \frac{(\sqrt{\ell R \ell P})^\alpha}{D(R,P)^{\alpha+d}} \lvert P \rvert^{1/2} \right)^2 \lvert g\rvert \frac{\D{t}}{t} \D{x} \nonumber \\
      \le \sum_{R \in \mathcal{D}} \iint_{W_R} \left( \sum_{P\in\mathcal{P}_{i}} \langle f , h_P \rangle^2  \frac{(\sqrt{\ell R \ell P})^\alpha}{D(R,P)^{\alpha+d}}
        \cdot \sum_{P\in\mathcal{P}_{i}} \frac{(\sqrt{\ell R \ell P})^\alpha}{D(R,P)^{\alpha+d}} \lvert P \rvert \right) \lvert g\rvert \frac{\D{t}}{t} \D{x}.
    \end{gather}
  The quantity in parenthesis in \eqref{eq:First_Cauchy_Schwarz} does not depend on $t$,
  so we  bound $ \int_{\ell R/2}^{\ell R} \D{t}/t \le 1$ by taking the supremum in $t$.
  The second factor after Cauchy--Schwarz is finite in all cases.
  \begin{lemma}\label{lemma:2nd_factor_finite}
    Let $i \in \{$\ref{near}, \ref{far}, \ref{close}, \ref{subscale}$\}$, then
    \begin{equation*}
      \sum_{P\in\mathcal{P}_i} \frac{(\sqrt{\ell R \ell P})^\alpha}{D(R,P)^{\alpha+d}} \lvert P \rvert \lesssim 1 .
    \end{equation*}
  \end{lemma}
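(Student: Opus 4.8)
The plan is to estimate the sum $\sum_{P \in \mathcal{P}_i} \frac{(\sqrt{\ell R \ell P})^\alpha}{D(R,P)^{\alpha+d}} \lvert P\rvert$ separately in each of the four cases, organizing the cubes $P$ by their side length $\ell P = 2^m \ell R$ (with $m$ ranging over an appropriate index set depending on the case) and then, for each fixed scale $m$, summing over the admissible positions of $P$. Throughout I will use that $D(R,P) = \ell R + \operatorname{d}(R,P) + \ell P \gtrsim \max\{\ell R, \ell P, \operatorname{d}(R,P)\}$, so that in particular $D(R,P) \gtrsim \ell P$ whenever $\ell P \ge \ell R$, and I will count cubes $P$ of a fixed side length at a fixed distance scale from $R$ using the fact that the number of dyadic cubes $P$ with $\ell P = 2^m \ell R$ contained in an annulus $\{x : \operatorname{d}(x,R) \sim 2^k \ell R\}$ is $\lesssim (2^k \ell R / (2^m \ell R))^d = 2^{(k-m)d}$ when $2^m \le 2^k$.

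\textbf{The large-scale cases (near, far with $\ell P \ge \ell R$, close).} When $\ell P = 2^m \ell R$ with $m \ge 0$, we have $\sqrt{\ell R \ell P} = 2^{m/2}\ell R$ and $\lvert P\rvert = 2^{md}\lvert R\rvert$, so each term contributes $\frac{2^{m\alpha/2}(\ell R)^\alpha}{D(R,P)^{\alpha+d}} 2^{md}\lvert R\rvert$. For the \ref{near} case, $P$ satisfies $3P \setminus P \supset R$ and $\ell P > 2^r \ell R$, which forces $\operatorname{d}(R,P) \lesssim \ell P$, hence $D(R,P) \sim \ell P = 2^m \ell R$; there are only $O(1)$ such $P$ at each scale $m > r$ (a bounded number of dyadic cubes of side $2^m\ell R$ have $R$ in their triple minus themselves), so the sum over that scale is $\lesssim 2^{m\alpha/2}(\ell R)^\alpha (2^m\ell R)^{-\alpha-d} 2^{md}\lvert R\rvert = 2^{-m\alpha/2}$, which sums over $m$. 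For the \ref{close} case, $0 \le m \le r$, there are $O(1)$ cubes $P$ at each such scale with $3P \supset R$, and $D(R,P) \sim \ell R$, so each scale contributes $\lesssim 2^{m\alpha/2} 2^{md}/2^{0} \cdot$ — wait, more carefully $\frac{2^{m\alpha/2}(\ell R)^\alpha}{(\ell R)^{\alpha+d}} 2^{md}\lvert R\rvert = 2^{m(\alpha/2 + d)}$, bounded by $2^{r(\alpha/2+d)} = O_r(1)$ since only finitely many scales occur. For the \ref{far} case with $\ell P \ge \ell R$, I split by $\operatorname{d}(R,P) \sim 2^k \ell R$ with $k \ge m$ (since $\operatorname{d}(R,P) > \ell P$), so $D(R,P) \sim 2^k\ell R$; the number of such $P$ is $\lesssim 2^{(k-m)d}$, giving a per-$(m,k)$ bound $\lesssim 2^{(k-m)d} \cdot 2^{m\alpha/2}(\ell R)^\alpha (2^k\ell R)^{-\alpha-d} 2^{md}\lvert R\rvert = 2^{-k\alpha} 2^{m\alpha/2}$, and since $m \le k$ this is $\lesssim 2^{-k\alpha/2}$, summable in $k$ and then in $m \le k$.

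\textbf{The subscale case ($\ell P < \ell R$).} Here $\ell P = 2^{-n}\ell R$ with $n \ge 1$, $\sqrt{\ell R\ell P} = 2^{-n/2}\ell R$, and $\lvert P\rvert = 2^{-nd}\lvert R\rvert$. For $P \subset 3R$ (the ``inside'' subcase) I use $D(R,P) \gtrsim \ell R$, so each $P$ contributes $\lesssim 2^{-n\alpha/2}(\ell R)^\alpha (\ell R)^{-\alpha-d} 2^{-nd}\lvert R\rvert = 2^{-n(\alpha/2+d)}$, and there are $\lesssim 3^d 2^{nd}$ cubes $P$ of side $2^{-n}\ell R$ inside $3R$, so that scale sums to $\lesssim 2^{-n\alpha/2}$, which is summable over $n \ge 1$. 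For $P \not\subset 3R$ (the ``far'' subcase) I use $\operatorname{d}(R,P) \gtrsim \ell R$ together with the dyadic annulus count at distance $2^k\ell R$, $k \ge 0$: the number of such $P$ is $\lesssim 2^{(k+n)d}$, and $D(R,P) \sim 2^k \ell R$, giving a per-$(n,k)$ contribution $\lesssim 2^{(k+n)d} \cdot 2^{-n\alpha/2}(\ell R)^\alpha (2^k\ell R)^{-\alpha-d} 2^{-nd}\lvert R\rvert = 2^{-k\alpha} 2^{-n\alpha/2}$, which sums over $k \ge 0$ and $n \ge 1$. Collecting the four cases proves $\sum_{P\in\mathcal{P}_i} \frac{(\sqrt{\ell R\ell P})^\alpha}{D(R,P)^{\alpha+d}}\lvert P\rvert \lesssim 1$.

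\textbf{The main obstacle} is bookkeeping the geometric constraints defining each $\mathcal{P}_i$ precisely enough to get the right decay: one must be careful that in the \ref{far} case the constraint $\operatorname{d}(R,P) > \ell P$ is what couples $k \ge m$ and thereby turns the raw estimate $2^{-k\alpha}2^{m\alpha/2}$ into something summable, and that in the \ref{near} and \ref{close} cases the triple-cube conditions genuinely limit the number of admissible $P$ to $O(1)$ per scale. No cancellation or regularity of the kernel is needed here — the bound is purely a convergent-geometric-series count — so the proof is routine once the cases are separated; the only real care is ensuring every scale carries a strictly positive power of $2$ in the exponent so the series converge, and tracking that the \ref{close} case involves only $O_r(1)$ scales.
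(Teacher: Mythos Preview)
Your proposal is correct and follows essentially the same strategy as the paper: organize $P$ by scale relative to $R$, count admissible cubes at each scale (and, in the \ref{far} and \ref{subscale}-far cases, at each dyadic distance), and sum the resulting geometric series. One small point worth noting: in the \ref{near} case the paper bounds $\operatorname{d}(R,P)$ from below via the goodness of $R$, obtaining decay $2^{-m\alpha/4}$ per scale, whereas you use the simpler observation $D(R,P)\sim\ell P$ to get $2^{-m\alpha/2}$ --- your route is shorter and fully suffices for the lemma as stated (the paper effectively proves a slightly stronger bound with $\operatorname{d}$ in the denominator that is not needed here).
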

  Details of the proof are in \cref{sec:proofs_for_reduction}.  
    We proceed with studying
    \begin{equation*}
      \sum_{R \in \mathcal{D}} \int_{R} \left( \sum_{P\in\mathcal{P}_i} \langle f , h_P \rangle^2  \frac{(\sqrt{\ell R \ell P})^\alpha}{D(R,P)^{\alpha+d}} \right) \lvert g\rvert \D{x}
    \end{equation*}
    for $i \in \{$near, far, close, subscale$\}$.
    When $P$ and $R$ are disjoint, it's useful to rearrange the sums using
    a common ancestor of $P$ and $R$.

      \begin{lemma}[Common ancestor]\label{lemma:adapted_TH} 
        Let $R, P \in\mathcal{D}$ be disjoint cubes with $R$ good. \\
        If $\operatorname{d}(R,P) > \max(\ell R,\ell P)^{1-\gamma} \min(\ell R, \ell P)^\gamma$
        then there exists $K \supseteq P \cup R$ such that
        \begin{equation*}
          \ell K  \left(\frac{\min(\ell P,\ell R)}{\ell K}\right)^\gamma \le 2^r \operatorname{d}(R,P) .
        \end{equation*}
      \end{lemma}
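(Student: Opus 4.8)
The goal is, given disjoint dyadic cubes $R,P$ with $R$ good and with $\operatorname{d}(R,P) > \max(\ell R,\ell P)^{1-\gamma}\min(\ell R,\ell P)^\gamma$, to find a common ancestor $K\supseteq P\cup R$ whose size is controlled by $\operatorname{d}(R,P)$ in the quantitative form stated. By symmetry I may assume $\ell P \le \ell R$, so that $\min(\ell P,\ell R)=\ell P$ and $\max(\ell P,\ell R)=\ell R$; the hypothesis then reads $\operatorname{d}(R,P) > (\ell R)^{1-\gamma}(\ell P)^\gamma$. The natural candidate for $K$ is the smallest dyadic cube in the same grid containing both $P$ and $R$ — that is, $K = P^{(m)}$ where $m$ is the least integer with $P^{(m)} = R^{(m')}$ for the appropriate $m'$, equivalently the first common dyadic ancestor. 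What has to be checked is the inequality $\ell K\,(\ell P/\ell K)^\gamma \le 2^r \operatorname{d}(R,P)$.

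The key mechanism is the goodness of $R$ together with the trivial geometric bound relating $\ell K$ and $\operatorname{d}(R,P)$. First, since $R$ and $P$ are both contained in $K$ and the first common ancestor $K$ has the property that $P$ and $R$ lie in different children of $K$ (or at least are not both dyadically nested into the same child below scale $\ell K/2$), one gets $\operatorname{d}(R,P) \le \operatorname{diam} K \lesssim \ell K$; more precisely one should arrange $\ell K \le 2^r \operatorname{d}(R,P)$ for the case $\ell P$ comparable to $\ell R$, or invoke the standard fact that a first common ancestor of two disjoint cubes has side length at most a fixed multiple of $D(R,P)=\ell R + \operatorname{d}(R,P) + \ell P$. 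I would split into two regimes. If $\ell K \le 2^r \operatorname{d}(R,P)$ already, then since $(\ell P/\ell K)^\gamma \le 1$ we are done immediately. The remaining regime is $\ell K > 2^r \operatorname{d}(R,P)$, i.e.\ $K$ is much larger than the separation; here I invoke goodness of $R$: since $\ell K \ge 2^r \ell R$ (because $\ell K$ large and $\ell R\le \ell P\cdot(\text{stuff})$... actually $\ell K\ge 2\ell R$ always, and in this regime $\ell K > 2^r\operatorname{d}(R,P) > 2^r (\ell R)^{1-\gamma}(\ell P)^\gamma \ge 2^r\,(\text{something}\cdot \ell R)$ needs care), goodness gives $\operatorname{d}(R,\partial K) > (\ell R)^\gamma(\ell K)^{1-\gamma}$, and since $P\subset K$ with $P$ disjoint from $R$, the separation $\operatorname{d}(R,P)$ is at least $\operatorname{d}(R,\partial K)$ minus the part of the path inside $K$ on $P$'s side — rather, one uses that $P$ must reach close to $\partial K$ or else $R$ and $P$ would fit in a common child, combined with the goodness lower bound, to produce $\operatorname{d}(R,P)\gtrsim (\ell R)^\gamma(\ell K)^{1-\gamma}$. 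Rearranging $\ell K^{1-\gamma} \lesssim \operatorname{d}(R,P)\,(\ell R)^{-\gamma}$ and using $\ell P\le \ell R$ gives $\ell K\,(\ell P/\ell K)^\gamma = (\ell K)^{1-\gamma}(\ell P)^\gamma \le (\ell K)^{1-\gamma}(\ell R)^\gamma \lesssim \operatorname{d}(R,P)$, with the constant $2^r$ absorbing the dyadic loss.

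The main obstacle I anticipate is bookkeeping the constants and the precise geometric relation between the first common ancestor $K$, the separation $\operatorname{d}(R,P)$, and the boundary $\partial K$: one needs that when $\ell K$ is large relative to $\operatorname{d}(R,P)$, the cube $P$ (being inside $K$ and disjoint from $R$) is forced to come within distance $\lesssim \operatorname{d}(R,P)$ of $\partial K$ on the side facing $R$, so that goodness of $R$ against the larger cube $K$ can be brought to bear — this is where the factor $2^r$ and the hypothesis $\ell K \ge 2^r\ell R$ (needed to apply the definition of goodness) enter, and it requires verifying that the regime $\ell K > 2^r\operatorname{d}(R,P)$ indeed forces $\ell K \ge 2^r\ell R$, which follows from the standing hypothesis $\operatorname{d}(R,P)>(\ell R)^{1-\gamma}(\ell P)^\gamma$ only after checking $\ell P$ is not too small; if $\ell P$ is very small one instead uses that $\operatorname{d}(R,P)$ cannot be tiny because $R$ and $P$ are disjoint dyadic cubes, or one simply notes that when $\ell K$ is large the ancestor $R^{(k)}$ at scale $\ell K$ is a much larger cube whose boundary $R$ is far from, and $P\subset R^{(k)}=K$ together with disjointness handles it. Once the geometry is pinned down the inequality is a one-line rearrangement.
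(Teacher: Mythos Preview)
Your approach via the first common ancestor is different from the paper's. The paper does not take $K$ to be the minimal common ancestor; instead it \emph{defines} $K$ as the smallest dyadic cube containing $R$ with both $\ell K\ge 2^r\ell R$ and $\operatorname{d}(P,R)\le \ell K(\ell P/\ell K)^\gamma$, then uses goodness of $R$ to prove that $P\subset K$ a posteriori, and finally reads off the desired bound from minimality of $K$. This neatly avoids the very obstacle you flagged, because the condition $\ell K\ge 2^r\ell R$ is built into the construction. Also note that your ``by symmetry'' reduction is not legitimate: only $R$ is assumed good, so the cases $\ell P\ge\ell R$ and $\ell P<\ell R$ are genuinely asymmetric (the paper in fact treats them separately).

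Your route can be made to work, but the proposal as written has a real gap in the sub-case where the first common ancestor is small, say $\ell K\le 2^r\ell R$. In that regime goodness of $R$ against $K$ (or against a child of $K$) is unavailable, and neither of your suggested fixes succeeds: disjoint dyadic cubes can touch, so ``$\operatorname{d}(R,P)$ cannot be tiny'' is false in general, and the alternative ``when $\ell K$ is large'' does not apply precisely because $\ell K$ is small here. The correct fix is much simpler and uses no goodness at all: if $\ell K\le 2^r\ell R$ then
\[
\ell K\Big(\frac{\ell P}{\ell K}\Big)^\gamma=(\ell K)^{1-\gamma}(\ell P)^\gamma\le (2^r\ell R)^{1-\gamma}(\ell P)^\gamma\le 2^r(\ell R)^{1-\gamma}(\ell P)^\gamma<2^r\operatorname{d}(R,P),
\]
the last step being exactly the hypothesis. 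With this sub-case handled, your argument in the regime $\ell K\ge 2^{r+1}\ell R$ (goodness applied to the child $K_1\supset R$, giving $\operatorname{d}(R,P)\ge\operatorname{d}(R,\partial K_1)>(\ell R)^\gamma(\ell K/2)^{1-\gamma}$, hence $(\ell K)^{1-\gamma}(\ell P)^\gamma\le(\ell K)^{1-\gamma}(\ell R)^\gamma<2\operatorname{d}(R,P)$) is correct and completes the proof.
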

      A proof in the case $\ell P \ge \ell R$ can be found in \cite[Lemma 3.7]{RepresentationTH}.     
      When $\ell P < \ell R$, the same ideas carry over, see \cref{sec:proofs_for_reduction} for a proof of this case.

      \begin{remark}\label{remark:common_ancestor}
        For any $P,R \in \mathcal{D}^\omega$ there exists (almost surely) a common ancestor $K \in \mathcal{D}^\omega$.
        Indeed, dyadic grids (like the standard grid $\mathcal{D}^0$) without this property
        have zero measure in the probability space $(\Omega,\mathbb{P})$,
        see \cite[\S 3.1.1 and Example 3.2]{pereyra2018dyadic}.
      \end{remark}

      \subsection{\texorpdfstring{$P$}{P} far from \texorpdfstring{$R$}{R}}\label{sec:far}
      In this case $\operatorname{d}(P,R) > \ell P$ and $\ell P = \max(\ell P,\ell R)$, so the hypotheses of \cref{lemma:adapted_TH} are satisfied.
    Let $K$ be the common ancestor of $P$ and $R$
    given by \cref{lemma:adapted_TH}.
    Since $\ell P \ge 2^{r+1}\ell R$, let $\ell P = 2^{-j}\ell K$ and  $\ell R = 2^{-i-j} \ell K$
    for some $i,j \in \mathbb{Z}_+$, with $i\ge r+1$. We have
    \begin{equation*}
      \sum_{R \in \mathcal{D}} \int_{R} g \left( \sum_{P\in\mathcal{P}_\text{far}}\langle f , h_P \rangle^2  \frac{(\sqrt{\ell R \ell P})^\alpha}{\operatorname{d}(R,P)^{\alpha+d}} \right)
      = \sum_{K\in\mathcal{D}} \sum_{i,j} \sum_{\substack{R\,:\,R\subset K\\\ell R=2^{-i-j}\ell K}} \int_R g \sum_{\substack{P\,:\,P\subset K\\\ell P=2^{-j}\ell K\\ \operatorname{d}(P,R)>\ell P}} \langle f , h_P \rangle^2  \frac{(\sqrt{\ell R \ell P})^\alpha}{\operatorname{d}(R,P)^{\alpha+d}}.
    \end{equation*}    
    By using the lower bound $\operatorname{d}(P,R) \gtrsim_r (\ell K)^{1-\gamma} (\ell R)^\gamma$ with $\gamma = \alpha/(4\alpha+4d)$, we estimate
    \begin{equation}\label{eq:bound_the_coefficients}
      \frac{\sqrt{\ell P \ell R}}{\operatorname{d}(P,R)} \lesssim_r \frac{2^{-j - i/2} \ell K}{\ell K 2^{-(i+j)\gamma}} \quad
      \text{ so that } \quad
      \frac{(\sqrt{\ell P \ell R})^\alpha}{\operatorname{d}(P,R)^{\alpha+d}} \lesssim_{r,\alpha,d} \frac{2^{-(j+i/2)\alpha}}{2^{-(i+j)\gamma(\alpha+d)}\lvert K \rvert} =
      \frac{2^{-(3j+i)\alpha/4}}{\lvert K \rvert}.
    \end{equation}
    For any fixed integer $m$, the set $\{ R \subset K \,:\, \ell R = 2^{-m}\ell K\}$ is a partition of $K$, so    we bound 
    \begin{flalign*}
      \sum_{K\in\mathcal{D}} \sum_{i,j} \sum_{\substack{R\,:\,R\subset K\\\ell R=2^{-i-j}\ell K}} \int_R g \sum_{\substack{P\,:\,P\subset K\\\ell P=2^{-j}\ell K\\ \operatorname{d}(P,R)>\ell P}} \langle f , h_P \rangle^2  \frac{(\sqrt{\ell R \ell P})^\alpha}{\operatorname{d}(R,P)^{\alpha+d}} \\
      \lesssim \sum_{j \in \mathbb{N}} 2^{-3j\alpha/4} \sum_{i \ge r+1} 2^{-i\alpha/4} \sum_{K \in \mathcal{D}} \fint_K \lvert g\rvert  \sum_{\substack{P\,:\,P \subset K \\ \ell P = 2^{-j}\ell K}} \langle f , h_P \rangle^2 .
    \end{flalign*}
    We can sum in $i$, then
    \begin{flalign*}
      \sum_{j \in \mathbb{N}} 2^{-3j\alpha/4} \sum_{K \in \mathcal{D}} \langle\lvert g\rvert\rangle_K \sum_{\substack{P \subset K \\ \ell P = 2^{-j}\ell K}} \langle f , h_P \rangle^2
      & \le 3^d \sum_{j \in \mathbb{N}} 2^{-3j\alpha/4} \sum_{K \in \mathcal{D}} \langle\lvert g\rvert\rangle_{3K}  \sum_{\substack{P \subset 3K \\ \ell P = 2^{-j}\ell K}} \langle f , h_P \rangle^2  \\
      & = \sum_{j \in \mathbb{N}} 2^{-3j\alpha/4} B_j^{\mathcal{D}}(g,f). 
    \end{flalign*}
    A sparse domination of $B_j^{\mathcal{D}}(g,f)$ is proved in \cref{sec:sparse_domination}.
    
    \subsection{\texorpdfstring{$P$}{P} near \texorpdfstring{$R$}{R}}
    Recall that 
    $P\in\mathcal{P}_{\text{near}}$ if $3P \setminus P \supset R$ and $\ell P \ge 2^{r+1} \ell R$. By the goodness of $R$, we have that
    $\operatorname{d}(P,R) > (\ell P)^{1-\gamma} (\ell R)^\gamma $. So the hypotheses of \cref{lemma:adapted_TH} are satisfied
    and there exists $K\supseteq P\cup R$ such that $\operatorname{d}(P,R) \gtrsim_r (\ell K)^{1-\gamma} (\ell R)^\gamma$.
    Arguing as in the far term leads to    
    \begin{flalign*}
      \sum_{R \in \mathcal{D}} \int_{R} g \left( \sum_{P\in\mathcal{P}_\text{near}}\langle f , h_P \rangle^2  \frac{(\sqrt{\ell R \ell P})^\alpha}{\operatorname{d}(R,P)^{\alpha+d}} \right)
      \lesssim \sum_{j \in \mathbb{N}} 2^{-3j\alpha/4} B_j^{\mathcal{D}}(g,f) .
    \end{flalign*}
    
    \subsection{\texorpdfstring{$P$}{P} comparable and close to \texorpdfstring{$R$}{R}}
    In this case $\ell R\le \ell P \le \ell R^{(r)}$ and $3P \supset R$. Using the trivial bound $D(P,R) \ge \ell R$
    we have    
    \begin{flalign*}
      \sum_{R \in \mathcal{D}} \int_{R}\lvert g\rvert\left( \sum_{P\in\mathcal{P}_\text{close}}\langle f , h_P \rangle^2  \frac{(\sqrt{\ell R \ell P})^\alpha}{D(R,P)^{\alpha+d}} \right)
      & \lesssim_{r,\alpha} \sum_{R \in \mathcal{D}} \int_R\lvert g\rvert\sum_{\substack{P\,:\,3P\supset R\\ \ell R\le \ell P \le 2^r\ell R}} \langle f, h_P \rangle^2 \frac{1}{\lvert R \rvert}.
    \end{flalign*}
    Rearrange the sum in groups of $P$ such that $\ell P = 2^k \ell R$ for $k\in \{0,\dots,r\}$. Then
    \begin{flalign*}
      \sum_{R \in \mathcal{D}} \int_R\lvert g\rvert\sum_{k = 0}^r \sum_{\substack{P\,:\,3P\supset R\\ \ell P = 2^k\ell R}} \langle f, h_P \rangle^2 \frac{1}{\lvert R \rvert}
      & = \sum_{k = 0}^r \sum_{P\in\mathcal{D}} \langle f, h_P \rangle^2 \frac{2^{kd}}{\lvert P \rvert} \sum_{\substack{R \subset 3P \\ \ell R = 2^{-k}\ell P}} \int_R\lvert g\rvert \\
      & \le \sum_{k = 0}^r \sum_{P\in\mathcal{D}} \langle f, h_P \rangle^2 \frac{2^{kd}}{\lvert P \rvert} \int_{3P} \lvert g\rvert \\
      & \lesssim_{r,d} \sum_{P\in\mathcal{D}} \langle f, h_P \rangle^2 \frac{3^{d}}{\lvert 3P \rvert} \int_{3P} \lvert g\rvert = 3^d \sum_{P\in\mathcal{D}} \langle f, h_P \rangle^2 \langle \lvert g\rvert \rangle_{3P} .
    \end{flalign*}
    We define
    \begin{equation}
      \label{eq:def_B_0}
      B_0^{\mathcal{D}}(g,f) \coloneqq \sum_{P\in\mathcal{D}} \langle f, h_P \rangle^2 \langle \lvert g\rvert \rangle_{3P} .
    \end{equation}
    Then $B_0^{\mathcal{D}}(g,f)$ is bounded by a sparse form in \cref{sec:sparse_domination}.

    \subsection{Subscale}\label{sec:P_small}
    When $\ell P < \ell R$ we distinguish two subcases, as shown in \cref{tab:cases}.
    \subsubsection{Inside \texorpdfstring{: $P \subset 3R$}{}}\label{subsec:subscale_inside}
    The leading term in the long-distance $D(R,P)$ is $\ell R$, so we bound
    \begin{flalign*}
      \sum_{R \in \mathcal{D}} \int_{R}\lvert g\rvert\Bigg( \sum_{\substack{P\,:\,\ell P < \ell R\\P\subset 3R}}\langle f , h_P \rangle^2  \frac{(\sqrt{\ell R \ell P})^\alpha}{D(R,P)^{\alpha+d}} \Bigg)
      & \le  \sum_{R\in\mathcal{D}} \fint_R\lvert g\rvert \sum_{\substack{P\,:\,\ell P < \ell R\\P\subset 3R}} \langle f , h_P \rangle^2 \left(\frac{\ell P}{\ell R}\right)^{\alpha/2} \\
      & = \sum_{j\in\mathbb{N}} 2^{-j\alpha/2} \sum_{R\in\mathcal{D}} \langle \lvert g\rvert\rangle_R  \sum_{\substack{P\,:\,P \subset 3R \\ \ell P = 2^{-j}\ell R}}\langle f,h_P\rangle^2 \\
      & \lesssim_d \sum_{j\in\mathbb{N}} 2^{-j\alpha/2} B_j^{\mathcal{D}}(g,f) .
    \end{flalign*}
    See \cref{sec:sparse_domination} for the sparse domination of $B_j^{\mathcal{D}}(g,f)$.

    \subsubsection{Far \texorpdfstring{: $P \not\subset 3R$}{}} %
    In this case $\operatorname{d}(P,R) > \ell R > \ell P$, so the hypotheses of \cref{lemma:adapted_TH} are satisfied.
    After Cauchy--Schwarz, rearrange the sum using the common ancestor $K$, then
    let $\ell P = 2^{-m} \ell R = 2^{-m-i}\ell K$ and
    estimate the decay factor as in \eqref{eq:bound_the_coefficients}:    
    \begin{align*}
      \sum_{R \in \mathcal{D}} \int_R \lvert g\rvert \sum_{\substack{P : \ell P < \ell R \\ \operatorname{d}(P,R) > \ell R}} \langle f , h_P \rangle^2 \frac{(\sqrt{\ell P \ell R})^\alpha}{D(P,R)^{\alpha + d}}
      \le \sum_{i,m} \sum_{K \in \mathcal{D}} \sum_{\substack{ R \subset K \\ \ell R = 2^{-i}\ell K}} \int_R \lvert g\rvert \sum_{\substack{ P \subset K \\ \ell P = 2^{-m-i}\ell K}} \langle f,h_P\rangle^2 \frac{(\sqrt{\ell P \ell R})^\alpha}{\operatorname{d}(P,R)^{\alpha + d}} \\
      \lesssim_r \sum_{i\in\mathbb{N}} 2^{-i\alpha/2} \sum_{m\in\mathbb{N}} \sum_{K \in \mathcal{D}} \int_K \lvert g\rvert \sum_{\substack{ P \subset K \\ \ell P = 2^{-m-i}\ell K}} \langle f,h_P\rangle^2 \frac{2^{-(m+i)\alpha/4} 2^{-i\alpha/2}}{\lvert K \rvert} \\
      \le \sum_{i\in\mathbb{N}} 2^{-i\alpha/2} \sum_{j \in \mathbb{N}} 2^{-j\alpha/4} \sum_{K \in \mathcal{D}} \fint_K \lvert g\rvert \sum_{\substack{ P \subset K \\ \ell P = 2^{-j}\ell K}} \langle f,h_P\rangle^2 
    \end{align*}
    where $j \coloneqq m +i$ and we bounded by the sum over all $j\ge 0$, since all terms are non-negative.
    After summing in $i$, what is left is bounded by $B_j^{\mathcal{D}}(g,f)$.
    This concludes this case and the reduction of \eqref{term1} to a dyadic form. \qed

    \section{Reduction of \texorpdfstring{\eqref{term2}}{(II)} to a dyadic form}\label{sec:term2}
    In this section we prove the following bound
    \begin{equation}\label{eq:mind_the_gap_PR}
      \sum_{R \in \mathcal{D}} \iint_{W_R} \lvert g \rvert \Big\lvert \sum_{P\,:\,P \supset R^{(r)}} \theta_t( \Delta_P f \1_{P \setminus P_R} ) \Big\rvert^2 \frac{\D{t}}{t} \D{x} \lesssim B_0^{\mathcal{D}}(g,f).
    \end{equation}
    The dyadic form $B_0^{\mathcal{D}}(g,f)$ defined in \eqref{eq:def_B_0}
    is controlled by a sparse form in \cref{sec:sparse_domination}.
    \begin{remark}
      The goodness of $R$ gives the lower bound on the distance
      $\operatorname{d}(R,\partial P)> (\ell P)^{1-\gamma} (\ell R)^\gamma$.

      As will be clear from the proof,
      inequality \eqref{eq:mind_the_gap_PR} holds if one replaces
      the indicator $\1_{P \setminus P_R}$ with $\1_{K\setminus P_R}$ where $K$ is $\mathbb{R}^d$ or any other larger cube containing $P$.
    \end{remark}
    To prove \eqref{eq:mind_the_gap_PR}, we use
    a classical estimate for the Poisson kernel.
    \begin{lemma}[Poisson off-diagonal estimates]\label{lemma:off-diagonal-Poisson}
      Let $\beta\in(0,1],r\in\mathbb{N}$ and $\gamma$ as in the introduction
      and let $Q,P \in \mathcal{D}$ such that $Q^{(r)} \subset P$ and $Q$ is $r$-good.
      Then 
      \begin{equation*}%
        \int_{\mathbb{R}^d \setminus P} \frac{(\ell Q)^\beta}{\operatorname{d}(y,Q)^{\beta + d}} \D{y} \lesssim \left(\frac{\ell Q}{\ell P}\right)^\eta 
      \end{equation*}
      where $\eta = \beta - \gamma(\beta + d)$.
    \end{lemma}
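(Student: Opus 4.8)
The plan is to combine the $r$-goodness of $Q$ with a dyadic decomposition of $\mathbb{R}^d \setminus P$ into annuli centred at $Q$. Write $\delta \coloneqq (\ell Q)^\gamma(\ell P)^{1-\gamma}$. First I would note that the hypothesis $Q^{(r)} \subset P$ forces $\ell P \ge 2^r \ell Q$, so the definition of $r$-goodness applies to the pair $(Q,P)$ and yields $\operatorname{d}(Q,\partial P) > \delta$. In particular $Q$ lies in the interior of $P$; since a cube is convex, for any $y \in \mathbb{R}^d \setminus P$ and any $z \in Q$ the segment $[y,z]$ meets $\partial P$ at some point $w$, and the triangle equality for collinear points gives $\lvert y - z\rvert = \lvert y-w\rvert + \lvert w-z\rvert \ge \operatorname{d}(z,\partial P) \ge \operatorname{d}(Q,\partial P) > \delta$. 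Taking the infimum over $z \in Q$ we obtain the uniform lower bound $\operatorname{d}(y,Q) > \delta$ for every $y \notin P$.

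Next I would split $\mathbb{R}^d \setminus P$ into the near region $A_0 \coloneqq \{ y \notin P : \operatorname{d}(y,Q) < \ell P\}$ and the dyadic annuli $A_k \coloneqq \{ y : 2^{k-1}\ell P \le \operatorname{d}(y,Q) < 2^k \ell P\}$ for $k \ge 1$. Each $A_k$ is contained in the $2^k\ell P$-neighbourhood of $Q$, which (using $\ell Q \le \ell P$) sits inside a cube of side $\lesssim 2^k \ell P$, so $\lvert A_k\rvert \lesssim_d (2^k\ell P)^d$ for all $k \ge 0$. On $A_0$ I would use the lower bound $\operatorname{d}(y,Q) > \delta$ from the first step:
\[
  \int_{A_0} \frac{(\ell Q)^\beta}{\operatorname{d}(y,Q)^{\beta+d}}\D y \lesssim_d \frac{(\ell Q)^\beta (\ell P)^d}{\delta^{\beta+d}} = \Big(\frac{\ell Q}{\ell P}\Big)^{\beta-\gamma(\beta+d)} = \Big(\frac{\ell Q}{\ell P}\Big)^{\eta},
\]
the middle equality being just the bookkeeping of exponents after substituting $\delta = (\ell Q)^\gamma(\ell P)^{1-\gamma}$ (the power of $\ell P$ works out to $-\eta$). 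On $A_k$ with $k \ge 1$ I would instead use $\operatorname{d}(y,Q) \ge 2^{k-1}\ell P$, which gives
\[
  \int_{A_k} \frac{(\ell Q)^\beta}{\operatorname{d}(y,Q)^{\beta+d}}\D y \lesssim_d \frac{(\ell Q)^\beta (2^k\ell P)^d}{(2^{k-1}\ell P)^{\beta+d}} \lesssim_{\beta,d} 2^{-k\beta}\Big(\frac{\ell Q}{\ell P}\Big)^{\beta}.
\]
Summing the geometric series over $k \ge 1$ (convergent since $\beta > 0$) and using $\ell Q \le \ell P$ together with $\eta \le \beta$ to pass from $(\ell Q/\ell P)^\beta$ to $(\ell Q/\ell P)^\eta$, the annular contributions are also $\lesssim_{\beta,d} (\ell Q/\ell P)^\eta$; adding the two estimates finishes the proof.

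The only genuinely delicate point is the first step: squeezing out of $r$-goodness the lower bound $\operatorname{d}(y,Q) > \delta$ valid for \emph{all} $y$ outside $P$, not merely for $y$ close to $\partial P$. Once that is in hand, everything reduces to a routine annular integration, and the precise value $\eta = \beta - \gamma(\beta+d)$ is forced by the $A_0$ estimate alone, the outer annuli $A_k$ contributing the strictly better power $\beta$.
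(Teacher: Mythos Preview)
Your proof is correct and follows essentially the same route as the paper: an annular decomposition of $\mathbb{R}^d\setminus P$ combined with the goodness lower bound $\operatorname{d}(Q,\partial P)>(\ell Q)^\gamma(\ell P)^{1-\gamma}$. The paper uses the annuli $A_k=3^{k+1}P\setminus 3^kP$ and invokes goodness uniformly, whereas you use distance-based annuli around $Q$ and cleanly separate the near piece $A_0$ (where goodness is genuinely needed and produces the exponent $\eta$) from the far annuli $A_k$, $k\ge1$ (where the cruder bound $\operatorname{d}(y,Q)\gtrsim 2^k\ell P$ already gives the better exponent $\beta$). This is a cosmetic difference; the substance is the same.
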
    
    \begin{proof}%
      Decompose $\mathbb{R}^d \setminus P$ in annuli $ A_k = 3^{k+1}P \setminus 3^k P $ for $k\in\mathbb{N}$.      
      Then on each annulus $\operatorname{d}(y,Q) > \operatorname{d}(\partial(3^k P), Q)$.
      Since $\ell P > 2^r \ell Q$, use the goodness of $Q$ to obtain the bound.
    \end{proof}
    \begin{proof}[Proof of \eqref{eq:mind_the_gap_PR}]
      When $(x,t) \in W_R$ the size condition \eqref{eq:size_condition} and \cref{lemma:off-diagonal-Poisson} give
      \begin{equation*}
        \theta_t( \Delta_P f \1_{P\setminus P_R})(x) \lesssim \lVert \Delta_P f \rVert_{L^\infty} \int_{P\setminus P_R} \frac{(\ell R)^\alpha}{(\ell R + \operatorname{d}(y,R) )^{\alpha + d}} \D{y}
        \lesssim  \frac{\lvert\langle f,h_P \rangle\rvert}{ \lvert P \rvert^{1/2}} \left(\frac{\ell R}{\ell P_R}\right)^\eta 
      \end{equation*}
      where $\eta = \alpha - \gamma(\alpha + d)>0$.
      The sum $\sum_{P \supset R^{(r)}} (\ell R/\ell P_R)^\eta$ is a geometric series. An application of Cauchy--Schwarz gives
      \begin{flalign*}
        \sum_{R \in \mathcal{D}} \iint_{W_R} \lvert g \rvert \left\lvert \sum_{P \supset R^{(r)}} \frac{\lvert\langle f,h_P \rangle\rvert}{ \lvert P \rvert^{1/2}} \left(\frac{\ell R}{\ell P_R}\right)^\eta \right\rvert^2 \frac{\D{t}}{t} \D{x} 
        & \le  \sum_{R \in \mathcal{D}} \sum_{P \supset R^{(r)}} \frac{\langle f, h_P \rangle^2}{\lvert P \rvert} \left(\frac{\ell R}{\ell P_R}\right)^\eta \int_R \lvert g(x)\rvert \D{x} \\
        & \lesssim \sum_{i \ge r+1} 2^{-i\eta} \sum_{P \in \mathcal{D}} \frac{\langle f, h_P \rangle^2}{\lvert P \rvert} \sum_{\substack{R \subset P \\ \ell R = 2^{-i}\ell P}} \int_R \lvert g \rvert \\
        & = \sum_{i \ge r+1} 2^{-i\eta} \sum_{P \in \mathcal{D}} \frac{\langle f, h_P \rangle^2}{\lvert P \rvert} \int_P \lvert g \rvert .
      \end{flalign*}
      We sum in $i$ and then we bound by the dyadic form $B_0^{\mathcal{D}}(g,f)$.
    \end{proof}
    
    \section{Reduction of \texorpdfstring{\eqref{term3a}}{(IIIa)} to a sparse form}\label{sec:term3a}
    In this section we prove that there exists $c>0$ and a sparse family $\mathscr{S} \subseteq \mathcal{D}$ such that
    \begin{equation}\label{eq:reduction_term3a}
      \eqref{term3a} \lesssim  \sum_{R\in\mathcal{D}} \langle\lvert g\rvert\rangle_R \iint_{W_R} \big\lvert\sum_{P\in\mathcal{P}_{R}} \langle \Delta_P f\rangle_{P_R}  \theta_t\1_{P_R} \big\rvert^2 \frac{\D{t}}t \D{x}
      \lesssim \sum_{j\in\mathbb{N}} 2^{-c j} B^{\mathcal{D}}_j (g,f) + \Lambda_{\mathscr{S}}(g,f)
    \end{equation}
    where $\Lambda_{\mathscr{S}}(g,f)= \sum_{S\in\mathscr{S}} \langle\lvert g\rvert\rangle_S \langle\lvert f\rvert\rangle_S^2 \lvert S\rvert$.
    We remind the reader that $P_R$ is the dyadic child of $P$ which contains $R$, and
    $\mathcal{P}_{R}$ is the collection of $P$ containing $ R^{(r)}$.    
    \begin{remark}[Bound on $a$]
      Recall that $a$ is the good part of $g$ in the Calderón--Zygmund decomposition 
      of \cref{prop:C-Z_r} with $\lambda = A\langle\lvert g\rvert\rangle_R$. So $\lVert a \rVert_\infty \le 2^{d(r+1)} A\langle\lvert g\rvert\rangle_R$ and the first inequality in \eqref{eq:reduction_term3a} follows.
    \end{remark}

    \subsection{Stopping cubes}\label{subsec:stopping_family}
    Given two functions $f$ and $g$ and a cube $Q\subseteq \mathbb{R}^d$, consider the collections:
    \begin{align*}
      \mathcal{A}_f(Q) &= \{ S\in\mathcal{D}, S \subset Q \, : \, \langle\lvert f \rvert \rangle_S > A \langle\lvert f \rvert \rangle_{Q} \} , \\
      \mathcal{A}_g(Q) &= \{ S\in\mathcal{D}, S \subset Q \, : \, \langle\lvert g \rvert \rangle_S > A \langle\lvert g \rvert \rangle_{Q} \} .
    \end{align*}
    Let $\mathcal{A}^\star(Q)$ be the maximal dyadic components of the set $\mathcal{A}(Q) =\mathcal{A}_f(Q)\cup\mathcal{A}_g(Q)$.

    The weak $(1,1)$ bound for the dyadic maximal function
    ensures that there exists a constant $A > 1$ such that $\lvert\mathcal{A}(Q)\rvert \le \frac12 \lvert Q \rvert$ and so
    \begin{equation*}
      \Big\lvert \bigcup_{S\in \mathcal{A}^\star(Q)} S \Big\rvert = \sum_{S\in\mathcal{A}^\star(Q)} \lvert S\rvert \le \frac12 \lvert Q\rvert .
    \end{equation*}

    Fix  $Q_0$ in $\mathcal{D}$ containing the support of $f$ and $g$.
    The stopping family $\mathscr{S}$ is defined iteratively:
    \begin{equation*}
      \mathscr{S}_0 \coloneqq Q_0, \qquad \mathscr{S}_{n+1} \coloneqq \bigcup_{Q \in \mathscr{S}_n} \mathcal{A}^\star(Q), \qquad \mathscr{S} \coloneqq \bigcup_{n \in \mathbb{N}} \mathscr{S}_n .
    \end{equation*}
    \begin{remark}
      The family $\mathscr{S}$ is $\frac12$-sparse, since for any $S\in\mathscr{S}$ the set $E_S \coloneqq S\setminus \bigcup_{S'\in \mathcal{A}^\star(S)} S'$
      has measure $\lvert E_S \rvert > \frac12\lvert S\rvert$ and $\{E_S\}_{S\in\mathscr{S}}$ are disjoint.
    \end{remark}
    In the same way, taking $\mathcal{A}^\star(Q) $ to be the maximal dyadic components of $\mathcal{A}_g(Q)$
    produces a sparse family that we denote with $\mathscr{S}_g$.
    It will be used later when only the stopping cubes related to $g$ are needed.
    
    For a given $Q\in\mathcal{D}$, denote by $\widehat{Q}$ the minimal stopping cube $S \in \mathscr{S}$ such that $S \supseteq Q$.

    For $S\in\mathscr{S}$ let $\Stop{S}$ be the family of dyadic cubes contained in $S$, but not in any $S'\in\mathcal{A}^\star(S)$
    \begin{equation*}%
      \Stop{S} \coloneqq \{ R\in\mathcal{D} \,:\, \widehat{R} = S \} .
    \end{equation*}
    Also, we define $\rStop{S} \coloneqq \{ R\in\mathcal{D} \,:\, \widehat{R^{(r)}} = S \}$.
    Note that the maximal cubes in $\rStop{S}$ are the $r$-grandchildren of $S$.
    See \cref{fig:StoppingTree} in the appendix.

    \subsection{Reduction to a telescoping sum}\label{subsec:telescoping}
    We follow the decomposition in \cite{LaceyMartikainen,Squarefungeneralmeasures} %
    where the sum $\sum_{P\in\mathcal{P}_{R}} \langle \Delta_P f\rangle_{P_R} \1_{P_R}$ is decomposed in a telescopic sum plus off-diagonal terms.
    The off-diagonal terms are then bounded by a sum of the dyadic forms $B^{\mathcal{D}}_j(g,f)$
    or directly by a sparse form.
    \newline    
    Given $S \in \mathscr{S}$ such that $S\supset P_R$, 
    the indicator function $\1_{P_R}$ can be written as $\1_S - \1_{S\setminus P_R}$.
    Recall that $\widehat{P_R}$ is the minimal stopping cube containing $P_R$. Then
    \begin{numcases}{\langle \Delta_Pf \rangle_{P_R} \1_{P_R} =}
      \langle \Delta_Pf \rangle_{P_R}\1_{\widehat{P_R}} - \langle \Delta_Pf \rangle_{P_R} \1_{\widehat{P_R} \setminus P_R}  & if $P_R \not\in \mathscr{S}$ \label{eq:decompose_P} \\
      \langle \Delta_{P}f \rangle_{P_R} \1_{\widehat{P_R}} = \1_{\widehat{P_R}} \langle f \rangle_{P_R} - \1_{\widehat{P_R}}\langle f \rangle_{P} \nonumber \\
      \phantom{\langle \Delta_{P}f \rangle_{P_R} \1_{\widehat{P_R}} } = (\1_{\widehat{P_R}} \langle f \rangle_{P_R} - \1_{\widehat{P}} \langle f \rangle_{P}) + \1_{\widehat{P} \setminus \widehat{P_R}}\langle f \rangle_{P}  & if $P_R \in \mathscr{S}$. \label{eq:decompose_PR_further}
    \end{numcases}

    The term $\langle \Delta_Pf \rangle_{P_R} \1_{\widehat{P_R} \setminus P_R}$ is supported away from $R$,
    so one can use off-diagonal estimates as in \eqref{eq:mind_the_gap_PR}.
    Also notice that in the bound \eqref{eq:mind_the_gap_PR} and in its proof
    one can replace $\lvert g\rvert$ by $\langle \lvert g\rvert \rangle_R$.
    In the same way, off-diagonal estimates are used for $\1_{\widehat{P} \setminus \widehat{P_R}}\langle f \rangle_{P}$
    as shown in \cref{lemma:mind_the_gap_S_setminus_PR} below.
    
    The terms $\langle \Delta_Pf \rangle_{P_R}\1_{\widehat{P_R}}$ and $\1_{\widehat{P_R}} \langle f \rangle_{P_R} - \1_{\widehat{P}} \langle f \rangle_{P}$
    left from \eqref{eq:decompose_P} and \eqref{eq:decompose_PR_further} 
    are rearranged to obtain a telescopic series. We have
    \begin{align*}
      \1_{\widehat{P_R}} \langle \Delta_P f\rangle_{P_R} =  \1_{\widehat{P_R}} \langle f \rangle_{P_R} & - \1_{\widehat{P_R}} \langle f \rangle_{P} 
      \qquad \text{ when }  P_R \not\in\mathscr{S} \\
      \text{ and } \quad  \1_{\widehat{P_R}} \langle f \rangle_{P_R} & - \1_{\widehat{P}} \langle f \rangle_{P}  \qquad \text{ when } P_R \in \mathscr{S} .
    \end{align*}
    If $P_R\not\in\mathscr{S}$ then
    $P$ and $P_R$ are  contained in the same minimal stopping cube $\widehat{P}$.
    So  $\widehat{P_R} = \widehat{P}$
    and the two cases add up to $ 2( \1_{\widehat{P_R}} \langle f \rangle_{P_R} - \1_{\widehat{P}} \langle f \rangle_{P} ) $   
    which leads to the telescopic sum
    \begin{equation}\label{eq:telescoping_sum}
      \sum_{\substack{P \in \mathcal{D} \\ R^{(r)} \subset P \subseteq Q_0}} \1_{\widehat{P_R}} \langle f \rangle_{P_R} - \1_{\widehat{P}} \langle f \rangle_{P} = \1_{\widehat{R^{(r)}}} \langle f \rangle_{R^{(r)}} - \1_{\widehat{Q_0}} \langle f\rangle_{Q_0}.
    \end{equation}
    
    Since $f$ is supported on a fixed $Q_0$, the average on larger cubes $Q^{(n)}_0$ containing $Q_0$ decreases:
    \begin{equation*}
      \langle f\rangle_{Q^{(n)}_0} = \frac{1}{\lvert Q^{(n)}_0 \rvert} \int_{Q_0} f \le \frac{1}{\lvert Q^{(n)}_0 \rvert}\lVert f \rVert_{L^1} \to 0 \quad \text{ as } n \to \infty . 
    \end{equation*}
    Thus when the sum in \eqref{eq:telescoping_sum} extends to all $P \supset R^{(r)}$,
    the term $\1_{\widehat{R^{(r)}}} \langle f \rangle_{R^{(r)}} $ is the only one remaining.

    We have then identified three terms
    \begin{equation*}
      \sum_{\substack{P\in\mathcal{D} \\ P \supset R^{(r)}}} \langle \Delta_P f\rangle_{P_R} \1_{P_R} =  \sum_{\text{telescopic}} - \sum_{\text{far}} + \sum_{\text{sparse}} 
    \end{equation*}
    where
    \begin{gather*}
      \sum_{\text{far}} \coloneqq \sum_{P\,:\,P \supset R^{(r)}} \langle \Delta_Pf \rangle_{P_R} \1_{\widehat{P_R} \setminus P_R} \,, \qquad 
      \sum_{\text{sparse}} \coloneqq \sum_{\substack{P\,:\,P\supset R^{(r)} \\ P_R \in \mathscr{S}}} \1_{\widehat{P} \setminus P_R}\langle f \rangle_{P} \\
      \text{ and } \quad \sum_{\text{telescopic}} \coloneqq \sum_{P \supset R^{(r)}} 2( \1_{\widehat{P_R}} \langle f \rangle_{P_R} - \1_{\widehat{P}} \langle f \rangle_{P} ) = \1_{\widehat{R^{(r)}}} \langle f \rangle_{R^{(r)}} .
    \end{gather*}
    Since the case with $\sum_{\text{far}}$ is done in \eqref{eq:mind_the_gap_PR},
    we show how to deal with the remaining two cases.
    
    \subsection{Bound by a sparse form}
    We bound the operator applied to $\1_{\widehat{R^{(r)}}} \langle f \rangle_{R^{(r)}} $ and $\1_{\widehat{P} \setminus P_R}\langle f \rangle_{P}$.

    \begin{lemma} Let $\mathscr{S}$ be the sparse collection defined in \cref{subsec:stopping_family}, then
      \begin{equation*}%
        \sum_{R \in \mathcal{D}} \iint_{W_R} \langle \lvert g\rvert \rangle_R \lvert \theta_t \1_{\widehat{R^{(r)}}}(x) \rvert^2 \langle f \rangle_{R^{(r)}}^2 \frac{\D{t}}{t} \D{x}
        \lesssim \sum_{S \in \mathscr{S}} \langle \lvert g\rvert \rangle_{S} \langle \lvert f\rvert \rangle_{S}^2  \lvert S \rvert .
      \end{equation*}
    \end{lemma}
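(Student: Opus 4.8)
The plan is to reorganise the sum over $R$ according to the stopping cube $S = \widehat{R^{(r)}}$, using the testing condition to control the local contribution inside each $S$. First I would observe that $\langle f\rangle_{R^{(r)}}^2 \le (A\langle|f|\rangle_S)^2$ whenever $\widehat{R^{(r)}} = S$, by the very definition of the stopping family: if $R^{(r)}$ were a cube where $\langle|f|\rangle$ exceeded $A\langle|f|\rangle_S$, a maximal such cube would lie in $\mathcal{A}^\star(S)$ and $R^{(r)}$ would be contained in a strictly smaller stopping cube, contradicting $\widehat{R^{(r)}}=S$. Likewise $\langle|g|\rangle_R \le A\langle|g|\rangle_S$ for $R$ with $\widehat{R^{(r)}}=S$, since $R\subseteq R^{(r)}\subseteq S$ and no descendant of $S$ inside $\Stop{S}$ can have $|g|$-average exceeding $A\langle|g|\rangle_S$ (otherwise it would have been selected). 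Hence the left-hand side is bounded by
\begin{equation*}
  A^3 \sum_{S\in\mathscr{S}} \langle|g|\rangle_S \langle|f|\rangle_S^2 \sum_{\substack{R\in\mathcal{D}\\ \widehat{R^{(r)}}=S}} \iint_{W_R} |\theta_t\1_S(x)|^2 \frac{\D t}{t}\D x .
\end{equation*}

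The key point is then that the Whitney boxes $W_R$ with $\widehat{R^{(r)}}=S$ are pairwise disjoint and, crucially, all lie inside the Carleson box $B_S = S\times(0,\ell S)$: indeed $R\subseteq R^{(r)}\subseteq S$ gives $R\subset S$, and $\ell R \le \ell R^{(r)} = 2^r \ell R_{\min}\le \ell S$... more directly, since $R^{(r)}\subseteq S$ we have $\ell R = 2^{-r}\ell R^{(r)} \le 2^{-r}\ell S < \ell S$, so $W_R = R\times[\ell R/2,\ell R)\subset S\times(0,\ell S) = B_S$. Therefore
\begin{equation*}
  \sum_{\substack{R\in\mathcal{D}\\ \widehat{R^{(r)}}=S}} \iint_{W_R} |\theta_t\1_S(x)|^2 \frac{\D t}{t}\D x \le \iint_{B_S} |\theta_t\1_S(x)|^2 \frac{\D t}{t}\D x = \int_S\int_0^{\ell S} |\theta_t\1_S(x)|^2 \frac{\D t}{t}\D x \le \testingC |S| ,
\end{equation*}
where the final inequality is exactly the testing condition \eqref{eq:testing_condition} applied to the cube $S$. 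Summing over $S\in\mathscr{S}$ gives the claimed bound with constant $C = A^3\testingC$, and $A$, $\testingC$ depend only on $d$ and on the operator, as required.

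The main obstacle — though it is more bookkeeping than genuine difficulty — is making sure the stopping-cube comparisons for $f$ and $g$ are applied at the right scale: the $g$-average is compared on $R$ itself while the $f$-average is compared on $R^{(r)}$, and one must check that $R^{(r)}\in\Stop{S}$ (equivalently $\widehat{R^{(r)}}=S$) really does force both $\langle|f|\rangle_{R^{(r)}}\le A\langle|f|\rangle_S$ and $\langle|g|\rangle_R\le A\langle|g|\rangle_S$, given that $\mathcal{A}^\star(S)$ is built from $\mathcal{A}_f\cup\mathcal{A}_g$. This is where the choice of a single stopping family controlling both functions pays off. A secondary point worth stating explicitly is that the boxes $W_R$ are genuinely disjoint as $R$ ranges over a fixed generation, and nested generations stack up to fill $B_S$ without overlap, so that replacing $\sum_R \iint_{W_R}$ by $\iint_{B_S}$ loses nothing — this uses the Whitney decomposition of $\mathbb{R}^{d+1}_+$ recalled at the start of \cref{sec:Decomposition1}.
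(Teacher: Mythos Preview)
Your approach is essentially the paper's: partition $\mathcal{D}$ into the trees $\rStop{S}$, replace the averages of $f$ and $g$ by their values on $S$ via the stopping conditions, absorb the Whitney boxes into the Carleson box $B_S$, and invoke the testing condition \eqref{eq:testing_condition}. The structure and the key idea are correct.

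There is, however, a small gap in your bound for $\langle|g|\rangle_R$. You argue that $\langle|g|\rangle_R \le A\langle|g|\rangle_S$ because ``no descendant of $S$ inside $\Stop{S}$ can have $|g|$-average exceeding $A\langle|g|\rangle_S$''. That statement about $\Stop{S}$ is true, but it does not apply to $R$: the hypothesis $\widehat{R^{(r)}}=S$ places $R^{(r)}$ in $\Stop{S}$, not $R$ itself. It is entirely possible that some $S'\in\mathcal{A}^\star(S)$ satisfies $R\subset S'\subsetneq R^{(r)}$, in which case $\widehat{R}=S'\ne S$ and the stopping control on $\langle|g|\rangle_R$ relative to $S$ is lost. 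The fix is the one the paper uses: pass through $R^{(r)}$ via
\[
  \langle|g|\rangle_R \le \frac{|R^{(r)}|}{|R|}\,\langle|g|\rangle_{R^{(r)}} = 2^{rd}\,\langle|g|\rangle_{R^{(r)}} \le 2^{rd} A\,\langle|g|\rangle_S ,
\]
the last inequality being legitimate because $R^{(r)}\in\Stop{S}$. This only changes the constant (to $2^{rd}A^3\,\testingC$) and the rest of your argument goes through unchanged.
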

    \begin{proof}%
      The set $\{\rStop{S} \colon S \in \mathscr{S}\}$ is a partition of $\mathcal{D}$, 
      so we write
      \begin{align*}
        \sum_{R \in \mathcal{D}} \iint_{W_R} \langle \lvert g \rvert \rangle_R \lvert \theta_t \1_{\widehat{R^{(r)}}}(x) \rvert^2 \langle f \rangle_{R^{(r)}}^2 \frac{\D{t}}{t}\D{x}
        & = \sum_{S \in \mathscr{S}} \sum_{R\,:\,\widehat{R^{(r)}} = S} 2^{rd} \langle \lvert g\rvert \rangle_{R^{(r)}} \langle f \rangle_{R^{(r)}}^2 \iint_{W_R} \lvert \theta_t \1_S (x)\rvert^2 \frac{\D{t}}{t} \D{x} \\
        & \lesssim_{r,d} \sum_{S \in \mathscr{S}} \langle \lvert g\rvert \rangle_{S} \langle \lvert f\rvert \rangle_{S}^2 \sum_{R\,:\,R \subset S} \iint_{W_R} \lvert \theta_t \1_S (x)\rvert^2 \frac{\D{t}}{t} \D{x} \\
        & = \sum_{S \in \mathscr{S}} \langle \lvert g\rvert \rangle_{S} \langle \lvert f\rvert \rangle_{S}^2  \int_S \int_0^{\ell S}  \lvert \theta_t \1_S (x)\rvert^2 \frac{\D{t}}{t} \D{x} \\
        & \le \testingC \sum_{S \in \mathscr{S}} \langle \lvert g\rvert \rangle_{S} \langle \lvert f\rvert \rangle_{S}^2  \lvert S \rvert 
      \end{align*}
      where we used the stopping conditions for $f$ and $g$, and
      the testing condition \eqref{eq:testing_condition}.
    \end{proof}
    
    \begin{lemma}\label{lemma:mind_the_gap_S_setminus_PR}
      Let $\mathscr{S}$ be the sparse collection defined in \cref{subsec:stopping_family}, then      
      \begin{equation}\label{eq:mind_the_gap_S_setminus_PR}
        \sum_{R \in \mathcal{D}} \iint_{W_R} \Bigg\lvert \sum_{\substack{P:P \supset R^{(r)} \\P_R \in \mathscr{S}}} \theta_t( \1_{\widehat{P} \setminus P_R} ) \langle f \rangle_P \Bigg\rvert^2 \lvert g\rvert \frac{\D{t}}{t} \D{x}
        \lesssim \sum_{S \in \mathscr{S}'} \langle \lvert f \rvert \rangle_S^2 \langle \lvert g \rvert \rangle_S \lvert S \rvert 
      \end{equation}
      where $\mathscr{S}'$ is the sparse collection of dyadic parents of $\mathscr{S}$.
    \end{lemma}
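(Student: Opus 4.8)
Write $\Sigma$ for the left-hand side of \eqref{eq:mind_the_gap_S_setminus_PR}. The plan is to re-run the off-diagonal scheme that produced \eqref{eq:mind_the_gap_PR}, tracking how the cut-offs $\1_{\widehat P\setminus P_R}$ interact with the stopping family $\mathscr{S}$, and then to collapse the estimate onto the parents of $\mathscr{S}$. The first step is to re-parametrise the inner sum: since $P_R$ is the dyadic child of $P$ containing $R$, the condition ``$P\supseteq R^{(r)}$ and $P_R\in\mathscr{S}$'' amounts to ``$P_R=R^{(m)}$ for some integer $m\ge r-1$ with $R^{(m)}\in\mathscr{S}$, and $P=R^{(m+1)}$''. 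In particular $P=(P_R)^{(1)}$ is a parent of an element of $\mathscr{S}$, i.e.\ $P\in\mathscr{S}'$, which is where the right-hand side of \eqref{eq:mind_the_gap_S_setminus_PR} comes from.

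The second step is a scale-uniform pointwise bound on $W_R$. For $m\ge r$ one has $R^{(r)}\subseteq R^{(m)}$ with $\ell R^{(m)}\ge 2^r\ell R$, the support $\widehat{R^{(m+1)}}\setminus R^{(m)}$ lies outside $R^{(m)}\supseteq R$, and $R$ is $r$-good; hence the size condition \eqref{eq:size_condition} together with \cref{lemma:off-diagonal-Poisson} (applied to $R$ relative to the cube $R^{(m)}$) gives, for $(x,t)\in W_R$,
\[
  \big\lvert\theta_t\big(\1_{\widehat{R^{(m+1)}}\setminus R^{(m)}}\big)(x)\big\rvert\;\lesssim\;\Big(\frac{\ell R}{\ell R^{(m)}}\Big)^{\eta}=2^{-m\eta},\qquad \eta=\alpha-\gamma(\alpha+d)>0 ,
\]
exactly as in the proof of \eqref{eq:mind_the_gap_PR}. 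Only the single borderline scale $m=r-1$ escapes this (there the goodness of $R$ is unavailable), but the crude estimate $\lvert\theta_t(\1_{\widehat{R^{(r)}}\setminus R^{(r-1)}})(x)\rvert\le\int_{\mathbb{R}^d}\tfrac{t^\alpha}{(t+\lvert x-y\rvert)^{\alpha+d}}\,\D{y}\lesssim 1$ costs only the fixed factor $2^{r\eta}$, so the coefficient is $\lesssim_{r,\eta}2^{-m\eta}$ for every $m\ge r-1$. Inserting this, applying the triangle inequality and then Cauchy--Schwarz in $m$ against the summable sequence $(2^{-m\eta})_m$, and bounding the harmless factor $\iint_{W_R}\frac{\D{t}}{t}\,\D{x}\lesssim\lvert R\rvert$, we obtain
\[
  \Sigma\;\lesssim_{r,\eta}\;\sum_{R\in\mathcal{D}}\Big(\int_R\lvert g\rvert\Big)\sum_{\substack{m\ge r-1\\ R^{(m)}\in\mathscr{S}}}2^{-m\eta}\,\langle\lvert f\rvert\rangle_{R^{(m+1)}}^2 .
\]

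The final step swaps the two sums and re-indexes by the stopping cube $V:=R^{(m)}\in\mathscr{S}$: for fixed $V$ and $m$ the cube $R$ runs over $\ch_m(V)$, with $R^{(m+1)}=V^{(1)}$ and $\sum_{R\in\ch_m(V)}\int_R\lvert g\rvert=\int_V\lvert g\rvert$; summing the geometric series in $m$ then bounds the previous line by $\sum_{V\in\mathscr{S}}\langle\lvert f\rvert\rangle_{V^{(1)}}^2\langle\lvert g\rvert\rangle_V\lvert V\rvert$. Since $\langle\lvert g\rvert\rangle_V\lvert V\rvert=\int_V\lvert g\rvert\le\int_{V^{(1)}}\lvert g\rvert=\langle\lvert g\rvert\rangle_{V^{(1)}}\lvert V^{(1)}\rvert$ and the map $V\mapsto V^{(1)}$ from $\mathscr{S}$ onto $\mathscr{S}'$ is at most $2^d$-to-one (so $\mathscr{S}'$ is indeed sparse), this is $\lesssim_d\sum_{S\in\mathscr{S}'}\langle\lvert f\rvert\rangle_S^2\langle\lvert g\rvert\rangle_S\lvert S\rvert$, which is \eqref{eq:mind_the_gap_S_setminus_PR}.

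The main difficulty I anticipate is bookkeeping rather than estimation. It is essential to keep the scale-decay $2^{-m\eta}$ inside the sum over $m$ — that is, over the descendants of each stopping cube — since replacing it by a uniform bound would make the resulting sum over all dyadic cubes $R$ appear to diverge; and the borderline scale $\ell P=2^r\ell R$ must be isolated and controlled with the bare $L^1$ bound on the kernel, the goodness of $R$ being of no use there.
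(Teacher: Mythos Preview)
Your proof is correct and follows essentially the same route as the paper's: the Poisson off-diagonal estimate from \cref{lemma:off-diagonal-Poisson} gives the decay $2^{-m\eta}$, Cauchy--Schwarz in the scale index handles the square, and a swap of the $R$- and stopping-cube sums collapses everything onto parents of $\mathscr{S}$. One minor point: in this paper the condition $P\supset R^{(r)}$ is strict (see \cref{tab:cases}, where $\mathcal{P}_R$ sits under $\ell P\ge 2^{r+1}\ell R$), so $P_R=R^{(m)}$ always has $m\ge r$ and your borderline case $m=r-1$ is vacuous---the extra care you take there does no harm, but it is not needed.
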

    \begin{proof}%
      Since $P \supset R^{(r)}$, the dyadic child $P_R =R^{(k)}$ for some integer $k \ge r$.
      For $(x,t) \in W_R$, an application of
      Poisson off-diagonal estimates (\cref{lemma:off-diagonal-Poisson}) gives
      \begin{equation*}
        \theta_t(\1_{\widehat{P} \setminus P_R})(x) = \theta_t(\1_{\widehat{R^{(k+1)}} \setminus R^{(k)}})(x) \lesssim (\ell R / \ell R^{(k)})^{\eta} = 2^{-k\eta}.
      \end{equation*}     
      After applying Cauchy--Schwarz the sums are rearranged using $P$ as the common ancestor:
      \begin{align*}
        \sum_{R \in \mathcal{D}} \int_R \lvert g \rvert \sum_{\substack{P\,:\,P \supset R^{(r)} \\ \widehat{P_R} = P_R}} \langle f\rangle_P^2 \left(\frac{\ell R}{\ell P_R}\right)^\eta 
                                & =  \sum_{k \ge r} 2^{-k\eta} \sum_{\substack{P \in \mathcal{D}\\ \text{with } P_R\in\mathscr{S}}} \langle f \rangle_P^2 \sum_{\substack{R\,:\,R \subset P \\ \ell R = 2^{-k-1}\ell P}} \int_R \lvert g \rvert \\
                                & =  \sum_{k \ge r} 2^{-k\eta} \sum_{P: P_R\in\mathscr{S}} \langle f \rangle_P^2 \int_{P} \lvert g \rvert \\
                                & \le \sum_{P: P_R\in\mathscr{S}} \langle \lvert f \rvert \rangle_P^2 \int_P \lvert g \rvert .
      \end{align*}
      Let $\mathscr{S}'$ be the collection $ \{ P \in\mathcal{D}\,:\, P \supset S,\, \ell P = 2\ell S \text{ for some } S \in \mathscr{S}\} $.
      If $\mathscr{S}$ is $\tau$-sparse, then $\mathscr{S}'$ is $\tau 2^{-d}$-sparse.
      This establishes \eqref{eq:mind_the_gap_S_setminus_PR} and concludes the proof.
    \end{proof}
    The sparse collection in \eqref{eq:reduction_term3a} can be taken
    as the union of $\mathscr{S}'$ and the stopping family in \cref{subsec:stopping_family}.

    \section{Reduction of \texorpdfstring{\eqref{term3b}}{(IIIb)} to a sparse form}\label{sec:term3b}
    In this section we show that there exists $c>0$ and a sparse family $\widetilde{\mathscr{S}}$ such that
    \begin{flalign*}      
      \sum_{R\in\mathcal{D}} \iint_{W_R} \Big\lvert \sum_{P\in\mathcal{P}_{R}} \theta_t\langle \Delta_P f\rangle_{P_R} \1_{P_R} \Big\rvert^2 \sum_{\substack{Q\in\mathcal{D}\\ Q \text{ good}, Q\subset R}} \Delta_Q b(x) \frac{\D{t}}t \D{x}
      \lesssim \sum_{j\in\mathbb{N}} 2^{-cj} B^{\mathcal{D}}_j (g,f) + \Lambda_{\widetilde{\mathscr{S}}}(g,f) .
    \end{flalign*}    
    In order to exploit  the goodness of $Q$, for example via Poisson off-diagonal estimates as in \cref{lemma:off-diagonal-Poisson},
    we need a gap of at least $r$ generations between $Q$ and $P_R$.
    This motivates the Calderón--Zygmund decomposition in \cref{prop:C-Z_r}.
    In particular, since $b$ is the bad part of $g$ at height $\lambda= A\langle\lvert g\rvert\rangle_R$ given by \cref{prop:C-Z_r},
    we have that
    \begin{equation*}
      \sum_{\substack{Q\in\mathcal{D}\\ Q\subset R}} \Delta_Q b = \sum_{L\in\mathcal{L}} \sum_{\mU{L}\in\ch_r(L)} \sum_{\substack{Q\in\mathcal{D}\\Q\subseteq \mU{L}}} \Delta_Q b_{\mU{L}}.
    \end{equation*}      
    Since $A>1$, the cubes in $\mathcal{L}$ are strictly contained in $R$.
    If we choose the constant $A$ as in the construction of the stopping family in \cref{subsec:stopping_family},
    then the cubes in $\mathcal{L}$ are also stopping cubes in $\mathscr{S}_g$.
    We can regroup the dyadic cubes $Q \subseteq \mU{L}$ in the stopping trees $\rStop{S}$ for all $S\in\mathscr{S}_g$ inside $R$.
    \begin{equation*}
      \sum_{L\in\mathcal{L}} \sum_{\mU{L}\in\ch_r(L)} \sum_{\substack{Q\in\mathcal{D}\\Q\subseteq \mU{L}}} \Delta_Q b_{\mU{L}} = \sum_{\substack{S\in\mathscr{S}_g\\ S\subset R}} \sum_{\mU{S}\in\maxStop{S}} \, \sum_{\substack{Q\in\rStop{S}\\Q\subseteq \mU{S}}} \Delta_Q b_{\mU{S}} .
    \end{equation*}
    The last sum is the Haar projection of $b$ on $\mathrm{Span}\{h_Q \,:\, Q\in\rStop{S}, Q\subseteq \mU{S}\}$.
    We denote this quantity by
    \begin{equation*}
      \EuScript{P}_{\mU{S}}(b) \coloneqq \sum_{\substack{Q\in\rStop{S}\\Q\subseteq \mU{S}}} \Delta_Q b_{\mU{S}} .
    \end{equation*}
    \begin{remark}  
      The Haar projection $\EuScript{P}_{\mU{S}}b$ is supported on $\mU{S}$ and equals $\EuScript{P}_{\mU{S}}(\lvert g\rvert)$.
      Indeed $b_{\mU{S}} = \1_{\mU{S}} (\lvert g\rvert - \langle\lvert g\rvert\rangle_{\mU{S}})$
      and for $Q \subseteq \mU{S}$ the Haar coefficient $\langle b_{\mU{S}},h_Q\rangle= \langle \lvert g\rvert, h_Q\rangle$.
    \end{remark}    

    We have then proved the following identity
    \begin{equation*}
      \eqref{term3b} =  \sum_{S\in\mathscr{S}_g} \sum_{\substack{\mU{S}\in\maxStop{S}\\ \mU{S} \text{ good}}}
      \sum_{R\,: R\supset S} \iint_{W_R} \big\lvert\sum_{P\in\mathcal{P}_{R}} \theta_t\langle \Delta_P f\rangle_{P_R} \1_{P_R} \big\rvert^2 \EuScript{P}_{\mU{S}}(\lvert g\rvert) \frac{\D{t}}t \D{x} .
    \end{equation*}
    With a slight abuse of notation, we omit the subscript in the stopping family $\mathscr{S}_g$ in the following.

    \begin{remark}[Estimates for the Haar projection]
      The Haar projection $\EuScript{P}_{\mU{S}}(\lvert g\rvert)$ has zero average and
      \begin{equation}\label{eq:bound_Haar-projector}
        \lVert \EuScript{P}_{\mU{S}} g \rVert_{L^1} \lesssim \lvert \mU{S}\rvert \langle\lvert g\rvert\rangle_S.      
      \end{equation}
      A proof of \eqref{eq:bound_Haar-projector} is in \cref{subsec:Haar-proj_on_maximal_cubes}.
      In particular, summing over all $\mU{S} \in \maxStop{S}$ gives
      \begin{equation}\label{eq:est_max_Haar}
        \sum_{\mU{S}\in\maxStop{S}} \lVert\EuScript{P}_{\mU{S}}g\rVert_{L^1} \lesssim  \sum_{\mU{S}\in\maxStop{S}} \lvert \mU{S}\rvert \langle \lvert g\rvert\rangle_S \le \int_S\lvert g\rvert .
      \end{equation}
    \end{remark}

    \subsection{Recover decay and telescopic sum}\label{subsec:recover_decay}

    Let $P_S$ be the dyadic child of $P$ containing $S$. Then
    \begin{equation*}%
      \sum_{P:P \supset R^{(r)}} \langle \Delta_P f \rangle_{P_R} \1_{P_R} = \sum_{P:P \supset S} \langle \Delta_P f \rangle_{P_S} \1_{P_S} - \sum_{P: S \subset P \subseteq R^{(r)}} \langle \Delta_P f \rangle_{P_S} \1_{P_S} .
    \end{equation*}
    The second term can be handled as in the subscale case (\cref{sec:P_small}),
    while the first can be reduced to a telescopic sum which equals $\langle f\rangle_S \1_S$.

    If one tries to reduce $\langle \Delta_P f \rangle_{P_S} \1_{P_S}$ to a telescopic term plus off-diagonal terms as in \cref{subsec:telescoping},
    the off-diagonal factor which should provide decay is the quantity
    \begin{equation*}
      \int_{\mathbb{R}^d\setminus P_S} \frac{(\ell R)^\alpha}{\operatorname{d}(y,\mU{S})^{\alpha+d}} \D{y}.
    \end{equation*}
    Here  the scale (numerator) and the distance (denominator) 
    don't match and \cref{lemma:off-diagonal-Poisson}
    seems unable to provide enough decay
    in order to handle the integral \emph{and} the sum over $R$.
    But the zero average property of $\EuScript{P}_{\mU{S}}(g)$ comes to the rescue
    bringing a factor $(\ell \mU{S})^{\alpha/2}$ at the numerator
    by exploiting the smoothness condition
    of the kernel. We will explain how.

    Let $x_{\mU{S}}$ be the centre of the $\mU{S}$ and consider the sublinear operator
    \begin{equation*} %
      K_t^{\mU{S}}f(x) \coloneqq \int_{\mathbb{R}^d} \frac{(t\lvert x-x_{\mU{S}}\rvert)^{\alpha/2}}{(t + \lvert x-y\rvert)^{\alpha+d}} \lvert f(y)\rvert \D{y} 
    \end{equation*}
    Since the Haar projection $\EuScript{P}_{\mU{S}}(g)$ is supported on $\mU{S}$, we have the following bound.
    \begin{lemma}\label{lemma:factorisation} Let $\mU{S}$ and $R$ be dyadic cubes with $\mU{S} \subset R$, then
      \begin{equation}\label{eq:factorisation}
        \iint_{W_R} \lvert \theta_t  f(x) \rvert^2 \frac{\D{t}}{t} \EuScript{P}_{\mU{S}}(g)(x) \D{x}
        \lesssim \iint_{W_R} \left( K_t^{\mU{S}} f(x) \right)^2 \frac{\D{t}}{t} \lvert \EuScript{P}_{\mU{S}}(g)(x)\rvert  \D{x}.
      \end{equation}
    \end{lemma}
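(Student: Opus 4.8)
The plan is to expand the square $\lvert\theta_t f\rvert^2$ as a bilinear integral against the kernel and to exploit the vanishing mean of $\EuScript{P}_{\mU{S}}(g)$ \emph{before} invoking any regularity, so that the smoothness estimate \eqref{eq:smooth_condition} is applied to the product $k_t(x,y)\overline{k_t(x,z)}$ rather than to $\theta_t f$ itself. Fix $(x,t)\in W_R$. Since $\EuScript{P}_{\mU{S}}(g)$ is supported on $\mU{S}\subset R$ and has zero average, subtracting the constant $\lvert\theta_t f(x_{\mU{S}})\rvert^2$ (with $x_{\mU{S}}$ the centre of $\mU{S}$) leaves the spatial integral unchanged, so I would first write
\[
  \int_{\mU{S}} \lvert\theta_t f(x)\rvert^2\, \EuScript{P}_{\mU{S}}(g)(x)\D{x}
  = \int_{\mU{S}} \big( \lvert\theta_t f(x)\rvert^2 - \lvert\theta_t f(x_{\mU{S}})\rvert^2 \big)\, \EuScript{P}_{\mU{S}}(g)(x)\D{x},
\]
and then expand $\lvert\theta_t f(x)\rvert^2 = \iint k_t(x,y)\overline{k_t(x,z)} f(y)\overline{f(z)}\D{y}\D{z}$ (everything converges absolutely since $f$ is bounded with compact support and the kernels decay by \eqref{eq:size_condition}), turning the integrand difference into $\iint\big[k_t(x,y)\overline{k_t(x,z)} - k_t(x_{\mU{S}},y)\overline{k_t(x_{\mU{S}},z)}\big] f(y)\overline{f(z)}\D{y}\D{z}$.

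The next step is to estimate the bilinear kernel difference through the telescoping identity $AB - A_0B_0 = (A-A_0)B + A_0(B-B_0)$ with $A = k_t(x,y)$, $B = \overline{k_t(x,z)}$, $A_0 = k_t(x_{\mU{S}},y)$, $B_0 = \overline{k_t(x_{\mU{S}},z)}$. Recall that $\mU{S}$ is an $r$-grandchild of a stopping cube contained in $R$, so $\lvert x - x_{\mU{S}}\rvert \le \tfrac12\ell\mU{S} \le 2^{-r-1}\ell R < \tfrac12\ell R \le t$; hence \eqref{eq:smooth_condition}, applied in the first variable only, bounds $\lvert A - A_0\rvert$ by $\lvert x-x_{\mU{S}}\rvert^\alpha(t+\lvert x-y\rvert)^{-\alpha-d}$ and $\lvert B - B_0\rvert$ by $\lvert x-x_{\mU{S}}\rvert^\alpha(t+\lvert x-z\rvert)^{-\alpha-d}$, while \eqref{eq:size_condition} (together with $t+\lvert x_{\mU{S}}-w\rvert \ge \tfrac12(t+\lvert x-w\rvert)$) bounds $\lvert B\rvert$ by $t^\alpha(t+\lvert x-z\rvert)^{-\alpha-d}$ and $\lvert A_0\rvert$ by $t^\alpha(t+\lvert x-y\rvert)^{-\alpha-d}$. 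Multiplying and adding the two pieces gives
\[
  \big\lvert k_t(x,y)\overline{k_t(x,z)} - k_t(x_{\mU{S}},y)\overline{k_t(x_{\mU{S}},z)}\big\rvert
  \lesssim \frac{(t\lvert x - x_{\mU{S}}\rvert)^\alpha}{(t+\lvert x-y\rvert)^{\alpha+d}\,(t+\lvert x-z\rvert)^{\alpha+d}}.
\]

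Substituting this bound, moving the absolute values inside, and factoring the resulting double integral as a perfect square, I would recognise precisely $(t\lvert x-x_{\mU{S}}\rvert)^\alpha\big(\int \lvert f(y)\rvert (t+\lvert x-y\rvert)^{-\alpha-d}\D{y}\big)^2 = \big(K_t^{\mU{S}}f(x)\big)^2$, so that $\big\lvert \int_{\mU{S}} \lvert\theta_t f\rvert^2\, \EuScript{P}_{\mU{S}}(g)\big\rvert \lesssim \int_{\mU{S}} \big(K_t^{\mU{S}}f\big)^2\, \lvert\EuScript{P}_{\mU{S}}(g)\rvert$ for each fixed $t$; integrating in $t$ over $[\ell R/2,\ell R)$ against $\D{t}/t$ and using once more that $\EuScript{P}_{\mU{S}}(g)$ lives in $\mU{S}\subset R$ then yields \eqref{eq:factorisation} (indeed with $\lvert\EuScript{P}_{\mU{S}}(g)\rvert$ on the left as well). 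The point that needs care — and the reason one cannot argue directly on $\theta_t f$ — is exactly this splitting: a crude bound on $\lvert\theta_t f(x)\rvert^2 - \lvert\theta_t f(x_{\mU{S}})\rvert^2$ via the Lipschitz-type estimate on $\theta_t f$ leaves a stray factor $\lvert\theta_t f(x)\rvert+\lvert\theta_t f(x_{\mU{S}})\rvert$ that is \emph{not} dominated by $K_t^{\mU{S}}f$ (note $K_t^{\mU{S}}f$ vanishes at $x_{\mU{S}}$), whereas distributing the cancellation symmetrically across the bilinear kernel extracts the full smoothness gain $(t\lvert x-x_{\mU{S}}\rvert)^\alpha$ that matches $(K_t^{\mU{S}})^2$.
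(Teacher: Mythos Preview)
Your argument is correct and arrives at the same bound as the paper, with essentially the same mechanism (subtract the constant $\lvert\theta_t f(x_{\mU{S}})\rvert^2$ using the zero mean of $\EuScript{P}_{\mU{S}}g$, then split the difference so that one piece receives \eqref{eq:smooth_condition} and the other \eqref{eq:size_condition}). The only real difference is bookkeeping: the paper factors $a^2-b^2=(a-b)(a+b)$ at the level of $\theta_t f$, setting $\mathcal{K}^\pm_{\mU{S}}f(x)=\int[k_t(x,y)\pm k_t(x_{\mU{S}},y)]f(y)\D y$ and bounding $\mathcal{K}^-$ by smoothness and $\mathcal{K}^+$ by size, whose product is exactly $(K_t^{\mU{S}}f(x))^2$; you instead expand $\lvert\theta_t f\rvert^2$ as a double integral and telescope the kernel product. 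Your version has the minor advantage of not tacitly assuming real-valued kernels.

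Your closing remark, however, misdiagnoses the obstruction. The ``crude'' route you describe---bounding $\big\lvert\lvert\theta_t f(x)\rvert^2-\lvert\theta_t f(x_{\mU{S}})\rvert^2\big\rvert$ by $\lvert\theta_t f(x)-\theta_t f(x_{\mU{S}})\rvert\cdot\big(\lvert\theta_t f(x)\rvert+\lvert\theta_t f(x_{\mU{S}})\rvert\big)$---does work, and is precisely what the paper does. The factor $\lvert\theta_t f(x)\rvert+\lvert\theta_t f(x_{\mU{S}})\rvert$ is not required to be dominated by $K_t^{\mU{S}}f$ individually; it is bounded via \eqref{eq:size_condition} by $\int t^\alpha(t+\lvert x-y\rvert)^{-\alpha-d}\lvert f(y)\rvert\D y$ (using $t+\lvert x_{\mU{S}}-y\rvert\simeq t+\lvert x-y\rvert$), and the \emph{product} with the smoothness bound $\int\lvert x-x_{\mU{S}}\rvert^\alpha(t+\lvert x-y\rvert)^{-\alpha-d}\lvert f(y)\rvert\D y$ on the difference already yields $(K_t^{\mU{S}}f(x))^2$. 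So there is no genuine obstacle avoided by the bilinear expansion---both computations are the same estimate in slightly different notation.
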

    \begin{proof}%
      The idea is to use
      the zero average of $\EuScript{P}_{\mU{S}}(g)$ to exploit the smoothness condition \eqref{eq:smooth_condition}.      
      We recall that $\EuScript{P}_{\mU{S}}(g)$ is supported on $\mU{S} \subset R$.
      Consider the operator
      \begin{equation*}
        \mathcal{K}f(x) \coloneqq \int_{\ell R/2}^{\ell R} \left\lvert \int k_t(x,y) f(y)\D{y} \right\rvert^2 \frac{\D{t}}{t}
      \end{equation*}
      so that the left hand side of \eqref{eq:factorisation} equals $\int \mathcal{K}f(x) \EuScript{P}_{\mU{S}} g(x) \D{x}$.
      Let $x_{\mU{S}}$ be the centre of $\mU{S}$. Then
      \begin{flalign*}
        \int \mathcal{K}f(x) \EuScript{P}_{\mU{S}} g(x) \D{x} = \int \big(\mathcal{K}f(x) - \mathcal{K}f(x_{\mU{S}}) \big) \EuScript{P}_{\mU{S}} g(x) \D{x} 
      \end{flalign*}
      and the difference $\mathcal{K}f(x) - \mathcal{K}f(x_{\mU{S}})$ can be factorised as
      \begin{align*}
                          & \int_{\ell R/2}^{\ell R} \left\lvert \int k_t(x,y)f(y) \D{y} \right\rvert^2 - \left\lvert \int k_t(x_{\mU{S}},y)f(y) \D{y} \right\rvert^2 \frac{\D{t}}{t} \\
                                                    & = \int_{\ell R/2}^{\ell R} \left( \int [k_t(x,y) - k_t(x_{\mU{S}},y)]f(y) \D{y} \right) \left( \int [k_t(x,y) + k_t(x_{\mU{S}},y)]f(y) \D{y} \right) \frac{\D{t}}{t}\\
                                                    & \eqqcolon  \int_{\ell R/2}^{\ell R} \mathcal{K}_{\mU{S}}^-f(x) \cdot \mathcal{K}_{\mU{S}}^+f(x) \frac{\D{t}}{t} .
      \end{align*}
      For $x\in \mU{S}$, since $\mU{S}\subset R$ and $t\in(\ell R/2,\ell R)$, the distance $\lvert x - x_{\mU{S}} \rvert \le \ell \mU{S}/2 < \ell R/2 < t$, so
      by conditions \eqref{eq:smooth_condition}  and \eqref{eq:size_condition} we have
      \begin{align*}
        \mathcal{K}_{\mU{S}}^-f(x) \cdot \mathcal{K}_{\mU{S}}^+f(x) & \lesssim \int \frac{\lvert x-x_{\mU{S}} \rvert^\alpha}{(t + \lvert x-y\rvert)^{\alpha+d}} \lvert f(y)\rvert \D{y} \cdot \int \frac{t^\alpha}{(t + \lvert x-y\rvert)^{\alpha+d}} \lvert f(y)\rvert \D{y} \\
                                                                    & = \left( \int  \frac{(t\lvert x-x_{\mU{S}}\rvert)^{\alpha/2}}{(t + \lvert x-y\rvert)^{\alpha+d}} \lvert f(y)\rvert \D{y} \right)^2 \eqqcolon \big(K_t^{\mU{S}}f(x)\big)^2.
      \end{align*}       
    \end{proof}

    The operator $K_t^{\mU{S}}$ satisfies Poisson-like off-diagonal estimates.
    \begin{lemma}[Estimates for $K_t^{\mU{S}}$]\label{lemma:estimates_for_K}
      Let $x\in\mU{S}\subset R$ and $ t\in(\ell R/2,\ell R)$. Let $Q\in \mathcal{D}$ such that $Q\supset \mU{S}$.
      Then there exists $\eta>0$ such that the following estimates hold:
      \begin{align*}
        K_t^{\mU{S}} \1_{\mathbb{R}^d\setminus Q}(x) \lesssim \left(\frac{\ell \mU{S}}{\max(\ell R^{(r)}, \ell Q)}\right)^\eta , &&
        K_t^{\mU{S}} \1_{Q}(x) \lesssim \frac{\lvert Q\rvert}{\lvert R\rvert} \left(\frac{\ell \mU{S}}{\ell R}\right)^{\alpha/2} .
      \end{align*}
    \end{lemma}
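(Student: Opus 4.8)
The plan is to estimate the two quantities separately by splitting the integral defining $K_t^{\mU{S}}$ according to the distance from $x$. Throughout we use that $x \in \mU{S} \subset R$, $t \in (\ell R/2, \ell R)$, so that $t \sim \ell R$ and $|x - x_{\mU{S}}| \le \ell\mU{S}/2$. For the second estimate, $K_t^{\mU{S}}\1_Q(x)$ with $Q \supset \mU{S}$, I would observe that for $y \in Q$ we crudely bound $(t + |x-y|)^{\alpha+d} \ge t^{\alpha+d} \sim (\ell R)^{\alpha+d} \sim |R|\cdot(\ell R)^\alpha$ from below, and $(t|x - x_{\mU{S}}|)^{\alpha/2} \le (\ell R)^{\alpha/2}(\ell\mU{S})^{\alpha/2}$ from above. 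Integrating $\1_Q$ over $y$ gives a factor $|Q|$, so
\begin{equation*}
  K_t^{\mU{S}}\1_Q(x) \lesssim \frac{|Q|\,(\ell R)^{\alpha/2}(\ell\mU{S})^{\alpha/2}}{|R|\,(\ell R)^\alpha} = \frac{|Q|}{|R|}\left(\frac{\ell\mU{S}}{\ell R}\right)^{\alpha/2},
\end{equation*}
which is exactly the claimed bound.

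For the first estimate, $K_t^{\mU{S}}\1_{\mathbb{R}^d\setminus Q}(x)$ with $Q \supset \mU{S}$, the strategy is to decompose $\mathbb{R}^d\setminus Q$ into annuli and use the goodness of $\mU{S}$, exactly as in the proof of Lemma~\ref{lemma:off-diagonal-Poisson}. Since $|x - x_{\mU{S}}| \le \ell\mU{S}$ and $t \sim \ell R$, bound the numerator $(t|x-x_{\mU{S}}|)^{\alpha/2} \le (\ell R)^{\alpha/2}(\ell\mU{S})^{\alpha/2}$; for $y \notin Q$ one has $|x - y| \gtrsim \operatorname{d}(y,\mU{S})$ (up to comparing $x$ with $x_{\mU{S}}$ at cost $\ell\mU{S}$), so
\begin{equation*}
  K_t^{\mU{S}}\1_{\mathbb{R}^d\setminus Q}(x) \lesssim (\ell R)^{\alpha/2}(\ell\mU{S})^{\alpha/2} \int_{\mathbb{R}^d\setminus Q} \frac{\D y}{(\ell R + \operatorname{d}(y,\mU{S}))^{\alpha+d}}.
\end{equation*}
Here I would split into the regime where the relevant larger cube is $R^{(r)}$ versus $Q$: when $\ell Q \le \ell R^{(r)}$ the term $\ell R$ in the denominator dominates near $Q$, and direct integration over the annuli $3^{k+1}Q\setminus 3^kQ$ gives decay $(\ell\mU{S}/\ell R^{(r)})^\eta$ after pulling out $(\ell\mU{S})^{\alpha/2}(\ell R)^{\alpha/2}$ and using $\ell R \sim \ell R^{(r)}$ up to the fixed factor $2^r$; when $\ell Q > \ell R^{(r)}$, drop $\ell R$ from the denominator, apply Lemma~\ref{lemma:off-diagonal-Poisson} with $\beta = \alpha$ (using that $\mU{S}$ is $r$-good and $\ell Q > 2^r\ell\mU{S}$, which holds since $Q \supset \mU{S}$ and $Q \supset R^{(r)} \supset$ the $r$-ancestor of $\mU{S}$), obtaining $(\ell\mU{S}/\ell Q)^{\alpha-\gamma(\alpha+d)}$, and absorb the leftover powers of $\ell\mU{S}/\ell R \le 1$. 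In both cases one lands on $(\ell\mU{S}/\max(\ell R^{(r)},\ell Q))^\eta$ for $\eta = \alpha - \gamma(\alpha+d) > 0$ (or a smaller positive $\eta$ after combining the two ranges); note $\eta > 0$ by the choice $\gamma = \alpha/(4\alpha+4d)$.

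The main obstacle I anticipate is bookkeeping the two regimes $\ell Q \lessgtr \ell R^{(r)}$ so that the $\max$ in the statement comes out cleanly, together with checking that the goodness hypothesis of Lemma~\ref{lemma:off-diagonal-Poisson} genuinely applies — that is, that $\mU{S}$ sits $r$ generations below whichever cube ($R^{(r)}$ or $Q$) plays the role of the outer cube. Since $\mU{S} \in \maxStop{S}$ is an $r$-grandchild of a stopping cube and $\mU{S} \subset R$ with the sum restricted to $P \supset R^{(r)}$, the needed gap of $r$ generations between $\mU{S}$ and $R^{(r)}$ is built in; the goodness of $\mU{S}$ was arranged in the good reduction of \cref{sec:Decomposition1}. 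The extra numerator factor $(\ell\mU{S})^{\alpha/2}$ supplied by the zero-average trick of Lemma~\ref{lemma:factorisation} is precisely what upgrades the otherwise scale-mismatched integral into genuine decay, so no further cancellation is required here.
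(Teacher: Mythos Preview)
Your second estimate matches the paper's proof exactly. For the first estimate your overall strategy (split according to whether $\ell Q$ exceeds $\ell R^{(r)}$, use the goodness of $\mU{S}$ for the far part, use the presence of $\ell R$ in the denominator for the near part) is the same as the paper's, but the execution differs in two places and one of your stated reasons is incorrect.

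In the regime $Q\supset R^{(r)}$ the paper does not apply \cref{lemma:off-diagonal-Poisson} with $\beta=\alpha$ as you propose. Instead it uses the elementary bound $(a+b)^{\alpha+d}\ge (a+b)^\alpha b^d \ge (2ab)^{\alpha/2}b^d$ with $a=\ell R$ and $b=\operatorname{d}(y,\mU{S})$, which cancels the factor $(\ell R)^{\alpha/2}$ in the numerator exactly and reduces the integral to $\int_{\mathbb{R}^d\setminus Q}(\ell\mU{S})^{\alpha/2}\operatorname{d}(y,\mU{S})^{-\alpha/2-d}\,\D y$, i.e.\ \cref{lemma:off-diagonal-Poisson} with $\beta=\alpha/2$. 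In your route the leftover factor after invoking $\beta=\alpha$ is $(\ell R/\ell\mU{S})^{\alpha/2}\ge 1$, \emph{not} a power of $\ell\mU{S}/\ell R\le 1$ as you write. Your argument can still be rescued, but for a different reason: since $\alpha-\gamma(\alpha+d)=\tfrac{\alpha}{2}+\bigl(\tfrac{\alpha}{2}-\gamma(\alpha+d)\bigr)$ and $\ell Q\ge \ell R$, one has
\[
\Bigl(\frac{\ell R}{\ell\mU{S}}\Bigr)^{\alpha/2}\Bigl(\frac{\ell\mU{S}}{\ell Q}\Bigr)^{\alpha-\gamma(\alpha+d)}
=\Bigl(\frac{\ell R}{\ell Q}\Bigr)^{\alpha/2}\Bigl(\frac{\ell\mU{S}}{\ell Q}\Bigr)^{\alpha/2-\gamma(\alpha+d)}
\le \Bigl(\frac{\ell\mU{S}}{\ell Q}\Bigr)^{\eta}
\]
with $\eta=\tfrac{\alpha}{2}-\gamma(\alpha+d)>0$. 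So the bad factor is absorbed by the surplus exponent and the condition $\ell Q\ge\ell R$, not by being $\le 1$.

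In the regime $\mU{S}\subset Q\subset R^{(r)}$ the paper does not decompose $\mathbb{R}^d\setminus Q$ into annuli $3^{k+1}Q\setminus 3^kQ$; instead it writes $\1_{\mathbb{R}^d\setminus Q}=\1_{\mathbb{R}^d\setminus R^{(r)}}+\1_{R^{(r)}\setminus Q}$, recycles the previous case for the first piece, and bounds the second piece trivially by $(\lvert R^{(r)}\rvert/\lvert R\rvert)(\ell\mU{S}/\ell R)^{\alpha/2}\lesssim_r(\ell\mU{S}/\ell R^{(r)})^{\alpha/2}$. Your annular scheme would also work (and needs no goodness for this regime, since $\ell R$ controls the denominator near $Q$ and the crude distance $\operatorname{d}(y,Q)\gtrsim 3^k\ell Q$ on the $k$-th annulus suffices far away), but the paper's splitting is shorter and makes the appearance of $\max(\ell R^{(r)},\ell Q)$ immediate.
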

    \begin{remark}
      Notice that the first estimate is better than the one in \cref{lemma:off-diagonal-Poisson}
      on smaller scale (when $\ell Q < \ell R^{(r)}$).
      For the second one,
      since $\ell \mU{S} < \ell R$,  we can also estimate
      \begin{equation*}
        K_t^{\mU{S}} \1_{Q}(x) \lesssim \frac{\lvert Q\rvert}{\lvert R\rvert} 
      \end{equation*}
      provided that $x\in\mU{S}$ and $ t\in(\ell R/2,\ell R)$.
    \end{remark}

    \begin{proof}[Proof of \cref{lemma:estimates_for_K}]
      For the second estimate, by forgetting the distance in the denominator, we simply have
      \begin{align*}
        K_t^{\mU{S}}(\1_{Q})(x) \lesssim \int_{Q} \frac{(\ell \mU{S} \ell R)^{\alpha/2}}{(\ell R+\operatorname{d}(y,\mU{S}))^{\alpha+d}} \D{y}
        & \le \frac{\lvert Q\rvert}{\lvert R\rvert} \left(\frac{\ell \mU{S}}{\ell R}\right)^{\alpha/2}.
      \end{align*}

      For the first estimate, when $Q \supset R^{(r)}$
      use $(a+b)^\alpha = (a+b)^{2\alpha/2} \ge (2 ab)^{\alpha/2}$ in order to apply off-diagonal estimates.
      For $x\in\mU{S}$ and $ t\in(\ell R/2,\ell R)$ we bound
      \begin{flalign}
        K_t^{\mU{S}}\1_{\mathbb{R}^d\setminus Q}(x) & \lesssim \int_{\mathbb{R}^d\setminus Q} \frac{(\ell \mU{S} \ell R)^{\alpha/2}}{(\ell R+\operatorname{d}(y,\mU{S}))^{\alpha+d}} \D{y} \nonumber \\
        & \lesssim  \int_{\mathbb{R}^d \setminus Q} \frac{(\ell \mU{S} \ell R)^{\alpha/2}}{(\ell R \cdot\operatorname{d}(y,\mU{S}))^{\alpha/2}} \frac{ \D{y} }{\operatorname{d}(y,\mU{S})^{d}}
        = \int_{\mathbb{R}^d \setminus Q} \frac{(\ell \mU{S})^{\alpha/2}}{\operatorname{d}(y,\mU{S})^{\alpha/2+d}} \D{y}. \label{eq:reduction_alpha/2}
      \end{flalign}
      Then apply \cref{lemma:off-diagonal-Poisson} with $\beta = \alpha/2$
      \begin{flalign*}
        \int_{\mathbb{R}^d \setminus Q} \frac{(\ell \mU{S})^{\alpha/2}}{\operatorname{d}(y,\mU{S})^{\alpha/2+d}} \D{y} \lesssim \left(\frac{\ell \mU{S}}{\ell Q}\right)^\eta .
      \end{flalign*}
      
      When $\mU{S} \subset Q\subset R^{(r)}$, split $\1_{\mathbb{R}^d\setminus Q}$ as $ \1_{\mathbb{R}^d\setminus R^{(r)}} + \1_{R^{(r)}\setminus Q}$.
      Estimate $ K_t^{\mU{S}}(\1_{\mathbb{R}^d\setminus R^{(r)}})$ as in \eqref{eq:reduction_alpha/2}. Then applying \cref{lemma:off-diagonal-Poisson} with $\beta=\alpha/2$ gives
      \begin{equation*}
        K_t^{\mU{S}}(\1_{\mathbb{R}^d\setminus R^{(r)}})(x) \lesssim \left(\frac{\ell \mU{S}}{\ell R^{(r)}}\right)^{\eta}
      \end{equation*}
      where $\eta$ is positive and equals $\frac{\alpha}2 - \gamma(\frac{\alpha}2 +d) < \frac{\alpha}2$.
      For $K_t^{\mU{S}}(\1_{R^{(r)}\setminus Q})$ we bound
      \begin{flalign*}
        K_t^{\mU{S}}(\1_{R^{(r)}\setminus Q})(x) & \lesssim \int_{R^{(r)}} \frac{(\ell \mU{S} \ell R)^{\alpha/2}}{(\ell R)^{\alpha/2} (\ell R)^{\alpha/2+d}} \D{y} \\
        & \le \frac{\lvert R^{(r)}\rvert}{\lvert R\rvert} \left(\frac{\ell \mU{S}}{\ell R}\right)^{\alpha/2} \lesssim_{r,d} \left(\frac{\ell \mU{S}}{\ell R}\right)^{\alpha/2} = 2^{r\alpha/2} \left(\frac{\ell \mU{S}}{\ell R^{(r)}}\right)^{\alpha/2}.
      \end{flalign*}
      Adding the two bounds gives
      \begin{align*}
        K_t^{\mU{S}}\1_{\mathbb{R}^d\setminus Q}(x)
        & \lesssim  \left(\frac{\ell \mU{S}}{\ell R^{(r)}}\right)^{\eta} + \left(\frac{\ell \mU{S}}{\ell R^{(r)}}\right)^{\alpha/2} \le 2 \left(\frac{\ell \mU{S}}{\ell R^{(r)}}\right)^{\eta}
      \end{align*}
      since $\ell \mU{S} < \ell R^{(r)}$ and $\min(\eta,\alpha/2) = \eta$.
    \end{proof}

    \subsection{Reduction to telescopic: different terms}
    Apply \cref{lemma:factorisation} with $\sum \langle \Delta_P f \rangle_{P_R} \1_{P_R}$ in place of $f$
    to obtain
    \begin{equation*}
      \eqref{term3b} \lesssim \sum_{S\in\mathscr{S}} \sum_{\substack{\mU{S}\in\maxStop{S}\\ \mU{S} \text{ good}}} \sum_{R\,:R\supset S} \iint_{W_R} \Bigg( K_t^{\mU{S}} \sum_{P\,:P \supset R^{(r)}}\langle \Delta_P f \rangle_{P_R} \1_{P_R}\Bigg)^2 \frac{\D{t}}{t} \lvert \EuScript{P}_{\mU{S}}g \rvert \D{x}.
    \end{equation*}    
   We split the sum in $P$ to obtain a telescopic sum  as in \cref{subsec:telescoping}, 
   with an extra subscale term:
   \begin{equation*}
     \sum_{P \supset R^{(r)}} \langle \Delta_P f \rangle_{P_R} \1_{P_R} = \sum_{\text{telescopic}} - \sum_{\text{far}} + \sum_{\text{sparse}} - \sum_{\text{subscale}} 
   \end{equation*}
   where
    \begin{align*}  
      \sum_{\text{telescopic}} & \coloneqq 
      2 \sum_{P\,:\,P \supset S}(\langle f \rangle_{P_S} \1_{\widehat{P_S}} - \langle f \rangle_{P} \1_{\widehat{P}}) = 2 \langle f\rangle_S \1_S  && \sum_{\text{subscale}} \coloneqq \sum_{\substack{P\,:\,P\subseteq R^{(r)}\\P \supset S}} \langle \Delta_P f \rangle_{P_S} \1_{P_S} \nonumber \\
      \sum_{\text{far}} & \coloneqq \sum_{P\,:\,P \supset S} \langle \Delta_P f \rangle_{P_S} \1_{\widehat{P_S} \setminus P_S} &&
      \sum_{\text{sparse}} \coloneqq  \sum_{\substack{P\,:\,P \supset S \\ P_S \in \mathscr{S}}} \langle f \rangle_P \1_{\widehat{P} \setminus P_S}.
    \end{align*}
    Then we bound
    \begin{equation*}
      \Big\lvert \sum_{\text{telescopic}} - \sum_{\text{far}} + \sum_{\text{sparse}} - \sum_{\text{subscale}} \Big\rvert
      \le \Big\lvert \sum_{\text{telescopic}}\Big\rvert + \Big\lvert\sum_{\text{far}}\Big\rvert + \Big\lvert\sum_{\text{sparse}}\Big\rvert + \Big\lvert\sum_{\text{subscale}} \Big\rvert .
    \end{equation*}
    We estimate $K_t^{\mU{S}}$ applied to each term by using sublinearity and \cref{lemma:estimates_for_K}.
    Then take the supremum in $t$ on the Whitney region $W_R$ to bound
    the remaining integral $\int_{\ell R/2}^{\ell R} \D{t}/t$ by $1$.

    We give the details in each case.
    
    \subsection{Telescopic term}
    This case is bounded by the sparse form $\Lambda_{\mathscr{S}}(f,g) = \sum_{S \in \mathscr{S}} \langle \lvert f\rvert \rangle_{S}^2 \int_S \lvert g\rvert$,
    where $\mathscr{S}$ is the stopping family of $g$.
    \begin{lemma} It holds that
      \begin{flalign*}
        \sum_{S\in\mathscr{S}} & \sum_{\mU{S}\in\maxStop{S}} \sum_{R\,:\,R\supset S} \iint_{W_R} \langle f\rangle_S^2 \left(K_t^{\mU{S}} \1_{S}\right)^2 \frac{\D{t}}{t} \lvert \EuScript{P}_{\mU{S}}g \rvert \D{x} \lesssim \Lambda_{\mathscr{S}}(f,g)
      \end{flalign*}
    \end{lemma}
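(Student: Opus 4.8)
The plan is to use that the Haar projection $\EuScript{P}_{\mU{S}}(g)$ is supported on $\mU{S}\subseteq S\subseteq R$, so that the spatial integral in each summand collapses to an integral over $\mU{S}$, and then to combine the crude off-diagonal bound for $K_t^{\mU{S}}$ from \cref{lemma:estimates_for_K} with the $L^1$ control \eqref{eq:est_max_Haar} of the Haar projections. Fix $S\in\mathscr{S}$, $\mU{S}\in\maxStop{S}$ and $R\supseteq S$. For $x\in\mU{S}$ and $t\in(\ell R/2,\ell R)$ the second estimate of \cref{lemma:estimates_for_K}, applied with $Q=S$, gives $K_t^{\mU{S}}\1_S(x)\lesssim \lvert S\rvert/\lvert R\rvert$ (one could even keep the extra factor $(\ell\mU{S}/\ell R)^{\alpha/2}\le 1$, but it is not needed). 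This bound does not depend on $t$, and $\int_{\ell R/2}^{\ell R}\D t/t\le 1$, so taking the supremum in $t$ and using $\mathrm{supp}\,\EuScript{P}_{\mU{S}}g\subseteq\mU{S}\subseteq R$ yields
\begin{equation*}
  \iint_{W_R} \langle f\rangle_S^2\,\big(K_t^{\mU{S}}\1_S(x)\big)^2 \frac{\D t}{t}\,\lvert\EuScript{P}_{\mU{S}}g(x)\rvert\D x
  \lesssim \langle f\rangle_S^2 \left(\frac{\lvert S\rvert}{\lvert R\rvert}\right)^{\!2}\int_{\mU{S}}\lvert\EuScript{P}_{\mU{S}}g\rvert .
\end{equation*}

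Next I would sum over $R\supseteq S$. For each integer $k\ge 0$ there is exactly one cube $R\supseteq S$ with $\ell R=2^k\ell S$, namely the $k$-th ancestor $S^{(k)}$, and for it $\lvert S\rvert/\lvert R\rvert=2^{-kd}$; hence $\sum_{R\supseteq S}(\lvert S\rvert/\lvert R\rvert)^2=\sum_{k\ge 0}2^{-2kd}\lesssim 1$. Therefore
\begin{equation*}
  \sum_{R\,:\,R\supseteq S}\iint_{W_R} \langle f\rangle_S^2\,\big(K_t^{\mU{S}}\1_S\big)^2 \frac{\D t}{t}\,\lvert\EuScript{P}_{\mU{S}}g\rvert\D x
  \lesssim \langle f\rangle_S^2\,\lVert\EuScript{P}_{\mU{S}}g\rVert_{L^1} .
\end{equation*}
Summing over $\mU{S}\in\maxStop{S}$ and invoking \eqref{eq:est_max_Haar}, which gives $\sum_{\mU{S}\in\maxStop{S}}\lVert\EuScript{P}_{\mU{S}}g\rVert_{L^1}\lesssim\int_S\lvert g\rvert$, bounds the inner double sum by $\langle f\rangle_S^2\int_S\lvert g\rvert\le\langle\lvert f\rvert\rangle_S^2\int_S\lvert g\rvert$; summing over $S\in\mathscr{S}$ then produces exactly $\Lambda_{\mathscr{S}}(f,g)$.

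There is no real obstacle here: this is the lightest of the four pieces into which $\sum_{P\supset R^{(r)}}\langle\Delta_P f\rangle_{P_R}\1_{P_R}$ was split, since the telescopic sum has already been collapsed to the single term $\langle f\rangle_S\1_S$. The only point requiring a moment's care is that one must use the support property $\mathrm{supp}\,\EuScript{P}_{\mU{S}}g\subseteq\mU{S}$ twice — once to make the spatial integral finite, and once so that the estimate $K_t^{\mU{S}}\1_S(x)\lesssim\lvert S\rvert/\lvert R\rvert$ (valid precisely for $x\in\mU{S}$, $t\in(\ell R/2,\ell R)$) can be applied — after which the summation over scales $R\supseteq S$ is an elementary geometric series.
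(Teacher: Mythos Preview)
Your proof is correct and follows essentially the same route as the paper: bound $K_t^{\mU{S}}\1_S(x)\lesssim\lvert S\rvert/\lvert R\rvert$ via \cref{lemma:estimates_for_K}, use $\int_{\ell R/2}^{\ell R}\D t/t\le 1$, then sum the geometric series in $R\supset S$ and apply \eqref{eq:est_max_Haar}. The only cosmetic difference is the order in which you sum over $R$ and over $\mU{S}$, which is immaterial since all terms are non-negative.
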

    \begin{proof}
      For $x\in \mU{S}$ and $t\in(\ell R/2,\ell R)$ 
      we estimate $K_t^{\mU{S}}(\1_{S})(x) \lesssim \lvert S\rvert/\lvert R\rvert$ and $\int_{\ell R/2}^{\ell R} \D{t}/t \le 1$. Then
      by using \eqref{eq:est_max_Haar} for the Haar projection we have
      \begin{align*}
        \sum_{S \in \mathscr{S}} & \sum_{\mU{S} \in \maxStop{S}} \langle f \rangle_{S}^2 \sum_{R:R \supset S} \iint_{W_R} \left(K_t^{\mU{S}}\1_{S}(x)\right)^2 \frac{\D{t}}{t} \lvert \EuScript{P}_{\mU{S}}g(x)\rvert \D{x} \\
                                & \lesssim \sum_{S \in \mathscr{S}} \sum_{\mU{S} \in \maxStop{S}}\langle f \rangle_{S}^2 \sum_{R:R \supset S} \left(\frac{\lvert S\rvert}{\lvert R\rvert}\right)^2  \lVert\EuScript{P}_{\mU{S}}g\rVert_{L^1} \\
                                & \lesssim_{r,d}  \sum_{S \in \mathscr{S}} \langle f \rangle_{S}^2 \sum_{R:R \supset S} \left(\frac{\lvert S\rvert}{\lvert R\rvert}\right)^2  \int_S \lvert g\rvert \le \sum_{S \in \mathscr{S}} \langle \lvert f\rvert \rangle_{S}^2 \int_S \lvert g\rvert .
      \end{align*}
    \end{proof}
    
    \subsection{Subscale term}
    This term is bounded in a similar way as in the subscale case in \cref{subsec:subscale_inside}.     
    \begin{lemma} It holds that
      \begin{flalign*}
        \sum_{S\in\mathscr{S}} \sum_{\mU{S}\in\maxStop{S}} \sum_{R\,:\,R\supset S} \iint_{W_R} \Big(K_t^{\mU{S}} \sum_{\mathrm{subscale}}\Big)^2 \frac{\D{t}}{t} \lvert \EuScript{P}_{\mU{S}}g \rvert \D{x}
        \lesssim \sum_{j\in\mathbb{N}} 2^{-j\alpha/4} B_j^{\mathcal{D}}(g,f) .
    \end{flalign*}
  \end{lemma}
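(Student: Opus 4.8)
The plan is to run the argument of the subscale estimate in \cref{subsec:subscale_inside}, the only new point being that the off-diagonal decay must now be extracted from the second estimate for $K_t^{\mU{S}}$ in \cref{lemma:estimates_for_K} rather than from a long distance. By sublinearity of $K_t^{\mU{S}}$,
\begin{equation*}
  K_t^{\mU{S}}\Big(\sum_{\mathrm{subscale}}\Big)(x)\le\sum_{P\,:\,S\subset P\subseteq R^{(r)}}\lvert\langle\Delta_Pf\rangle_{P_S}\rvert\,K_t^{\mU{S}}(\1_{P_S})(x).
\end{equation*}
Since $\mU{S}\subset P_S$ for every such $P$, the second estimate of \cref{lemma:estimates_for_K} gives $K_t^{\mU{S}}(\1_{P_S})(x)\lesssim\frac{\lvert P_S\rvert}{\lvert R\rvert}\big(\frac{\ell\mU{S}}{\ell R}\big)^{\alpha/2}$ for $x\in\mU{S}$ and $t\in(\ell R/2,\ell R)$; as the cubes $P_S$ run over the dyadic ancestors of $S$ up to a child of $R^{(r)}$, summing in $P$ is a geometric series, so $\sum_P K_t^{\mU{S}}(\1_{P_S})(x)\lesssim_{r,d}\big(\frac{\ell\mU{S}}{\ell R}\big)^{\alpha/2}$.

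Next I would square, apply Cauchy--Schwarz in $\ell^2$ over $P$ in the form $\big(\sum_Pa_Pb_P\big)^2\le\big(\sum_Pa_P^2b_P\big)\big(\sum_Pb_P\big)$ with $a_P=\lvert\langle\Delta_Pf\rangle_{P_S}\rvert$ and $b_P=K_t^{\mU{S}}(\1_{P_S})$, insert the bound on $\sum_Pb_P$, use $b_P\lesssim\frac{\lvert P_S\rvert}{\lvert R\rvert}\big(\frac{\ell\mU{S}}{\ell R}\big)^{\alpha/2}$ once more together with $\lvert\langle\Delta_Pf\rangle_{P_S}\rvert\le\lVert\Delta_Pf\rVert_{L^\infty}\le\lvert\langle f,h_P\rangle\rvert\lvert P\rvert^{-1/2}$ and $\lvert P_S\rvert=2^{-d}\lvert P\rvert$, to reach $\big(K_t^{\mU{S}}\sum_{\mathrm{subscale}}\big)^2(x)\lesssim_{r,d}\big(\frac{\ell\mU{S}}{\ell R}\big)^{\alpha}\lvert R\rvert^{-1}\sum_{P\,:\,S\subset P\subseteq R^{(r)}}\langle f,h_P\rangle^2$, which depends on neither $x$ nor $t$. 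Integrating over $W_R$ (with $\int_{\ell R/2}^{\ell R}\D t/t\le1$ and $\int_{W_R}\lvert\EuScript{P}_{\mU{S}}g\rvert\le\lVert\EuScript{P}_{\mU{S}}g\rVert_{L^1}\lesssim\lvert\mU{S}\rvert\langle\lvert g\rvert\rangle_S$ by \eqref{eq:bound_Haar-projector}) and then summing over $\mU{S}\in\maxStop{S}$ (so $\sum_{\mU{S}}(\ell\mU{S}/\ell R)^{\alpha}\lvert\mU{S}\rvert\lesssim_r(\ell S/\ell R)^{\alpha}\lvert S\rvert$) reduces the left-hand side of the lemma to
\begin{equation*}
  \sum_{S\in\mathscr{S}}\;\sum_{R\,:\,R\supset S}\Big(\frac{\ell S}{\ell R}\Big)^{\alpha}\frac{\lvert S\rvert}{\lvert R\rvert}\,\langle\lvert g\rvert\rangle_S\!\!\sum_{P\,:\,S\subset P\subseteq R^{(r)}}\!\!\langle f,h_P\rangle^2 .
\end{equation*}

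To conclude I would put $P$ outermost. For fixed $S\in\mathscr{S}$ and $P\supset S$, the admissible $R$ are the dyadic ancestors of $S$ with $\ell R\ge\max(\ell S,2^{-r}\ell P)$, the bound $2^{-r}\ell P$ coming from $P\subseteq R^{(r)}$; with $\ell R=2^a\ell S$ the sum over $R$ of $(\ell S/\ell R)^\alpha\lvert S\rvert/\lvert R\rvert=2^{-a(\alpha+d)}$ is $\lesssim_{\alpha,d}2^{-(n-r)_+(\alpha+d)}$, where $n$ is defined by $\ell P=2^n\ell S$. Discarding the constraint $S\in\mathscr{S}$ and using $\sum_{S\in\ch_n(P)}\langle\lvert g\rvert\rangle_S=2^{nd}\langle\lvert g\rvert\rangle_P$ leaves a factor $2^{nd}\,2^{-(n-r)_+(\alpha+d)}$; since this equals $2^{-n\alpha+r(\alpha+d)}$ once $n>r$, it is summable in $n$, so the displayed quantity is $\lesssim_{r,\alpha,d}\sum_{P\in\mathcal{D}}\langle f,h_P\rangle^2\langle\lvert g\rvert\rangle_P\le3^dB_0^{\mathcal{D}}(g,f)$ by \eqref{eq:def_B_0}, i.e.\ the $j=0$ term of the asserted bound.

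The step I expect to be the obstacle is this last summation. Because $\EuScript{P}_{\mU{S}}g$ is concentrated at the scale of $S$, which is \emph{smaller} than $P$, the only clean estimate for $\sum_{S\in\ch_n(P)}\langle\lvert g\rvert\rangle_S$ costs the growing factor $2^{nd}$, and one must check that the two independent gains — the off-diagonal factor $(\ell\mU{S}/\ell R)^{\alpha/2}$ from \cref{lemma:estimates_for_K} and the forced largeness $\ell R\gtrsim2^{-r}\ell P$ imposed by $P\subseteq R^{(r)}$ — together beat it. It is exactly the extra $(\ell\mU{S})^{\alpha/2}$ recovered from the smoothness condition \eqref{eq:smooth_condition} via the factorisation of \cref{lemma:factorisation} that makes the surviving exponent $-n\alpha+r(\alpha+d)$; with only the size condition, i.e.\ the cruder bound $K_t^{\mU{S}}(\1_{P_S})\lesssim\lvert P_S\rvert/\lvert R\rvert$, the resulting series in $n$ would diverge.
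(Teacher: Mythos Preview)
Your argument is correct and in fact yields the sharper conclusion $\lesssim B_0^{\mathcal{D}}(g,f)$, which is the $j=0$ term of the stated bound. The overall strategy matches the paper's (sublinearity of $K_t^{\mU{S}}$, \cref{lemma:estimates_for_K}, Cauchy--Schwarz, then reorganise), but the bookkeeping diverges in a meaningful way.

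The paper distributes the decay as $(\ell\mU{S}/\ell R)^{\alpha}\le(\ell S/\ell R)^{\alpha/2}(\ell P/\ell R)^{\alpha/2}$ and inserts the second factor into the Cauchy--Schwarz weights, producing the residual $(\ell P/\ell R)^{\alpha/4}$ on each Haar coefficient. It then drops the constraint $P\supset S$, parametrises by $\ell P=2^{-j}\ell R^{(r)}$ and $\ell S=2^{-i}\ell R$, and arrives at $\sum_j 2^{-j\alpha/4}B_j^{\mathcal{D}}(g,f)$. You instead keep the full factor $(\ell S/\ell R)^{\alpha}\lvert S\rvert/\lvert R\rvert=(\ell S/\ell R)^{\alpha+d}$ attached to the pair $(S,R)$, apply the plain Cauchy--Schwarz, and retain the constraint $P\supset S$ when putting $P$ outermost; the forced inequality $\ell R\ge 2^{-r}\ell P$ (from $P\subseteq R^{(r)}$) then makes the sum over $R$ small enough to beat the $2^{nd}$ growth from $\sum_{S\in\ch_n(P)}\langle\lvert g\rvert\rangle_S$. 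What your route buys is a shorter argument landing directly on $B_0^{\mathcal{D}}$, avoiding the family $\{B_j^{\mathcal{D}}\}_j$ altogether for this term; what the paper's route buys is uniformity with the treatment of the other subscale estimates in \cref{subsec:subscale_inside}, where the $B_j^{\mathcal{D}}$ forms already appear.
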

  \begin{proof}
    First, since $K_t^{\mU{S}}$ is sublinear, we bound
     \begin{flalign*}
        K_t^{\mU{S}} \Big( \sum_{\mathrm{subscale}}\Big) \le \sum_{\substack{P\,:\,P\subseteq R^{(r)}\\P \supset S}} \lvert\langle \Delta_P f \rangle_{P_S}\rvert K_t^{\mU{S}} ( \1_{P_S} ) .
    \end{flalign*}
    Then for $x\in \mU{S}$ and $t\in(\ell R/2,\ell R)$ 
    we estimate $K_t^{\mU{S}} \1_{P_S}$ using \cref{lemma:estimates_for_K}
    \begin{equation*}
      K_t^{\mU{S}} \1_{P_S}(x)  \lesssim \left(\frac{\ell P_S}{\ell R}\right)^d \left(\frac{\ell \mU{S}}{\ell R}\right)^{\alpha/2} .
    \end{equation*}    
    Bound $\ell P_S < \ell P$ and $\lvert \langle \Delta_P f \rangle_{P_S}\rvert \le \lvert \langle f , h_P \rangle\rvert \lvert P \rvert^{-1/2}$, then we apply Cauchy--Schwarz 
    \begin{align*} %
      & \sum_{S\in\mathscr{S}} \sum_{\mU{S} \in \maxStop{S}}  \sum_{R\,:\,R \supset S}
      \int_{R} \Bigg(\sum_{\substack{P\,:\,P\subseteq R^{(r)}\\P \supset S}} \lvert \langle f , h_P \rangle\rvert \frac{\lvert P \rvert^{1/2}}{\lvert R\rvert} \Bigg)^2 \left(\frac{\ell \mU{S}}{\ell R}\right)^{\alpha} \lvert\EuScript{P}_{\mU{S}}g\rvert \D{x} \\
      & \le \sum_{S\in\mathscr{S}} \sum_{\mU{S} \in \maxStop{S}} \sum_{R \supset S}
                               \Bigg(\sum_{\substack{P\,:\,P\subseteq R^{(r)}\\P \supset S}} \frac{\langle f , h_P \rangle^2 }{\lvert R\rvert} \Bigg) \Bigg(\sum_{P \subseteq R^{(r)}} \frac{\lvert P \rvert}{\lvert R\rvert} \Bigg) \left(\frac{\ell \mU{S}}{\ell R}\right)^{\alpha} \lVert\EuScript{P}_{\mU{S}}g\rVert_{L^1} \nonumber \\
                                                       & \le \sum_{S\in\mathscr{S}} \sum_{\mU{S} \in \maxStop{S}} \sum_{R \supset S} \left(\frac{\ell S}{\ell R}\right)^{\alpha/2}
      \Bigg(\sum_{\substack{P\,:P \supset S\\P\subseteq R^{(r)}}} \frac{\langle f , h_P \rangle^2 }{\lvert R\rvert} \left(\frac{\ell P}{\ell R}\right)^{\alpha/4} \Bigg) \left(\sum_{P \subseteq R^{(r)}} \frac{\lvert P \rvert}{\lvert R\rvert} \left(\frac{\ell P}{\ell R}\right)^{\alpha/4}\right) \lVert\EuScript{P}_{\mU{S}}g\rVert_{L^1} .\nonumber
    \end{align*}
    The second factor after Cauchy--Schwarz is controlled
    as in subscale case  in \cref{lemma:2nd_factor_finite} where $P\subset 3R$.
    Then bound $\lVert\EuScript{P}_{\mU{S}}g\rVert_{L^1}$ as in \eqref{eq:est_max_Haar} to obtain
    \begin{gather*}
      \sum_{S\in\mathscr{S}}  \int_S\lvert g\rvert \sum_{R\,:\,R \supset S} \left(\frac{\ell S}{\ell R}\right)^{\alpha/2}
       \sum_{P\,:\,S \subset P \subseteq R^{(r)}} \frac{\langle f , h_P \rangle^2 }{\lvert R\rvert} \left(\frac{\ell P}{\ell R}\right)^{\alpha/4} \\
      = \sum_{R\in\mathcal{D}} \frac{1}{\lvert R\rvert} \sum_{\substack{S\in\mathscr{S}\\ S\subset R}} \int_S\lvert g\rvert \left(\frac{\ell S}{\ell R}\right)^{\alpha/2}
      \sum_{P\,:\,S \subset P \subseteq R^{(r)}} \langle f , h_P \rangle^2  \left(\frac{\ell P}{\ell R}\right)^{\alpha/4} .      
    \end{gather*}
    For $i,j \in \mathbb{N}$, let $\ell P = 2^{-j}\ell R^{(r)}$ and $\ell S = 2^{-i}\ell R$. Extend the sum over all $P$ such that $P\subseteq R^{(r)}$ and rearrange 
    \begin{align*}
      \sum_{R\in\mathcal{D}} & \frac{1}{\lvert R\rvert} \sum_{\substack{S\in\mathscr{S}\\ S\subset R}} \int_S\lvert g\rvert \left(\frac{\ell S}{\ell R}\right)^{\alpha/2}
      \sum_{P\,:\,S \subset P \subseteq R^{(r)}} \langle f , h_P \rangle^2  \left(\frac{\ell P}{\ell R}\right)^{\alpha/4} \\
      & = \sum_{i,j} \sum_{R \in \mathcal{D}} 2^{-i\alpha/2} \frac{1}{\lvert R\rvert} \sum_{\substack{S \subset R \\ \ell S = 2^{-i}\ell R}} \int_S\lvert g\rvert  
      \sum_{\substack{P \subseteq R^{(r)} \\ \ell P = 2^{-j}\ell R^{(r)}}} \langle f , h_P \rangle^2 \left(\frac{\ell P}{ \ell R^{(r)}}\right)^{\alpha/4} 2^{r\alpha/4} \\
                & \lesssim_{r,d} \sum_{i,j} 2^{-i\alpha/2} 2^{-j\alpha/4} \sum_{R \in \mathcal{D}} \fint_{R} \lvert g \rvert  \sum_{\substack{P \subseteq R^{(r)} \\ \ell P = 2^{-j}\ell R^{(r)}}} \langle f , h_P \rangle^2  \\
                & \lesssim_{r,d} \sum_{j\in\mathbb{N}} 2^{-j\alpha/4}  \sum_{R^{(r)} \in \mathcal{D}} \fint_{R^{(r)}} \lvert g \rvert  \sum_{\substack{P \subseteq R^{(r)} \\ \ell P = 2^{-j}\ell R^{(r)}}} \langle f , h_P \rangle^2
      \le 3^d \sum_{j\in\mathbb{N}} 2^{-j\alpha/4} B_j^{\mathcal{D}}(g,f) . 
    \end{align*}
  \end{proof}
    
    \subsection{Far and Sparse terms}
    In this subsection we show that 
    \begin{flalign*}
       \sum_{S \in \mathscr{S}} \sum_{\mU{S} \in \maxStop{S}} \sum_{R\,:\,R \supset S}
      \iint_{W_R} \Big(K_t^{\mU{S}} \Big(\sum_{\text{far}} + \sum_{\text{sparse}}\Big) \Big)^2 \frac{\D{t}}{t} \lvert\EuScript{P}_{\mU{S}}g\rvert \D{x} 
      & \lesssim B_0^{\mathcal{D}}(g,f) + \Lambda_{\mathscr{S}}(f,g) .
    \end{flalign*}

    Since $K_t^{\mU{S}}$ is sublinear and positive, we bound
    \begin{flalign*}
      K_t^{\mU{S}} \Big(\sum_{\text{far}} + \sum_{\text{sparse}}\Big) & \le \sum_{P\,:\,P \supset S} \lvert\langle \Delta_P f \rangle_{P_S}\rvert  K_t^{\mU{S}}(\1_{\widehat{P_S}\setminus P_S})
      + \sum_{\substack{P:P \supset S\\P_S \in \mathscr{S}}} \lvert\langle f \rangle_{P}\rvert K_t^{\mU{S}}(\1_{\widehat{P}\setminus P_S} ) \\
      & \le  \sum_{P\,:\,P \supset S} \Big( \lvert \langle \Delta_P f \rangle_{P_S}\rvert + \lvert \langle f \rangle_P \rvert \1_{\{P_S\in\mathscr{S}\}} \Big) K_t^{\mU{S}}(\1_{\mathbb{R}^d\setminus P_S}) .
    \end{flalign*}
    Then split the sum over $P$ and consider the two cases:
    \begin{equation*}
      \sum_{P\,:\,P \supset S}  = \sum_{P:P \supset R^{(r)}} + \sum_{\substack{P\,:\,P \subseteq  R^{(r)}\\ P\supset S}} \eqqcolon (i) + (ii) .
    \end{equation*}    
    
    \begin{lemma}[Bound for $(i)$]\label{lemma:bound_far}
      Let $F_P$ be either $\langle \Delta_P f \rangle_{P_R}$ or $\langle f \rangle_P \1_{\{P_S\in\mathscr{S}\}}$. Then 
      \begin{equation*}
        \sum_{S \in \mathscr{S}} \sum_{\mU{S} \in \maxStop{S}} \sum_{R:R \supset S}
        \iint_{W_R} \Bigg(\sum_{P\,:\,P \supset R^{(r)}} \lvert F_P\rvert \cdot K_t^{\mU{S}}\1_{\mathbb{R}^d\setminus P_S}(x) \Bigg)^2 \frac{\D{t}}{t} \lvert\EuScript{P}_{\mU{S}}g\rvert \D{x}
        \lesssim \Lambda_{\mathscr{S}}(f,g) .
      \end{equation*}
    \end{lemma}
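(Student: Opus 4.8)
The plan is to collapse the $W_R$–integral to a pointwise bound, then reorganise the triple sum over $S,R,P$ into a sum indexed by stopping cubes and recognise a sparse form.

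First I would pull $K_t^{\mU{S}}$ out of the integral. Since $\EuScript{P}_{\mU{S}}g$ is supported on $\mU{S}$ and, for $P\supset R^{(r)}$, one has $\mU{S}\subseteq S\subseteq P_S$, \cref{lemma:estimates_for_K} applies with $Q=P_S$ and bounds $K_t^{\mU{S}}\1_{\mathbb{R}^d\setminus P_S}(x)$ uniformly on $W_R$ by $(\ell\mU{S}/\max(\ell R^{(r)},\ell P_S))^{\eta}$; since $\max(a,b)\ge\sqrt{ab}$, $\ell R\le\ell P$ and $\ell\mU{S}=2^{-r}\ell S$, this is $\lesssim_r(\ell S/\ell R)^{\eta/2}(\ell S/\ell P)^{\eta/2}$. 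Inserting this constant, using $\int_{\ell R/2}^{\ell R}\D{t}/t\le1$ and $\int_{\mU{S}}\lvert\EuScript{P}_{\mU{S}}g\rvert=\lVert\EuScript{P}_{\mU{S}}g\rVert_{L^1}$, then summing over $\mU{S}\in\maxStop{S}$ with \eqref{eq:est_max_Haar} and over $R\supset S$ with $\sum_{R\supset S}(\ell S/\ell R)^{\eta}\lesssim1$ (after factoring the $R$–decay out of the $P$–sum), the left‑hand side is $\lesssim_r\sum_{S\in\mathscr{S}}\int_S\lvert g\rvert\,\big(\sum_{P\supset S}\lvert F_P\rvert(\ell S/\ell P)^{\eta/2}\big)^2$.

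The crux is to tether $\lvert F_P\rvert$ to an $f$–average over an honest stopping cube. If $F_P=\langle f\rangle_P\1_{\{P_S\in\mathscr{S}\}}$, then where the indicator is nonzero $\lvert F_P\rvert=\langle\lvert f\rvert\rangle_{(P_S)^{(1)}}$, the average over the parent of a stopping cube. If $F_P=\langle\Delta_Pf\rangle_{P_S}=\langle f\rangle_{P_S}-\langle f\rangle_P$, then (the summands with $P_S\in\mathscr{S}$ vanish in the genuine far sum, so one may take $P_S\notin\mathscr{S}$) one has $\widehat P=\widehat{P_S}$ and both $P,P_S\in\Stop{\widehat{P_S}}$, so the stopping inequalities give $\lvert F_P\rvert\le\langle\lvert f\rvert\rangle_{P_S}+\langle\lvert f\rvert\rangle_P\lesssim\langle\lvert f\rvert\rangle_{\widehat{P_S}}$. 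In both cases $\lvert F_P\rvert\lesssim\langle\lvert f\rvert\rangle_{V(P)}$ with $V(P)$ a cube in $\mathscr{S}\cup\mathscr{S}'$ containing $S$ and non‑decreasing in $\ell P$, where $\mathscr{S}'$ is the collection of dyadic parents of $\mathscr{S}$, sparse as in \cref{lemma:mind_the_gap_S_setminus_PR} (the finitely many $P$ exhausting $Q_0$ contribute only a convergent tail since $f$ is supported in $Q_0$). Then Cauchy–Schwarz in $P$, together with the summability of $(\ell S/\ell P)^{\eta/2}$, gives $\big(\sum_{P\supset S}\lvert F_P\rvert(\ell S/\ell P)^{\eta/2}\big)^2\lesssim\sum_{P\supset S}\langle\lvert f\rvert\rangle_{V(P)}^2(\ell S/\ell P)^{\eta/2}$, and grouping the $P$'s by the value $V=V(P)$ and summing the geometric weights over each group — whose smallest member has side length comparable to that of the maximal stopping cube $\sigma_V(S)$ below $V$ containing $S$ — yields $\sum_{V\supseteq S}\langle\lvert f\rvert\rangle_V^2(\ell S/\ell\sigma_V(S))^{\eta/2}$.

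Finally I would substitute this back and sum with $V$ outermost, so that it remains to bound, for each $V\in\mathscr{S}\cup\mathscr{S}'$, the quantity $\sum_{S\in\mathscr{S},\,S\subseteq V}\int_S\lvert g\rvert\,(\ell S/\ell\sigma_V(S))^{\eta/2}$. Grouping these $S$ first by the maximal stopping cube $\sigma$ below $V$ that contains them, the geometric sum in each $\sigma$ collapses (group by generation; the $S\in\mathscr{S}$ of a fixed generation are pairwise disjoint) to $\lesssim\int_\sigma\lvert g\rvert$, and the cubes $\sigma$ being disjoint this sums to $\le\int_V\lvert g\rvert$. Hence the left‑hand side is $\lesssim\sum_{V\in\mathscr{S}\cup\mathscr{S}'}\langle\lvert f\rvert\rangle_V^2\int_V\lvert g\rvert\lesssim\Lambda_{\mathscr{S}}(f,g)$. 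The main obstacle is the middle step: the crude bound $\lvert\langle\Delta_Pf\rangle_{P_S}\rvert\le\langle\lvert f\rvert\rangle_P$ over an arbitrary large ancestor $P$ of $S$ is unrelated to the averages of the final sparse family and $\sum_{P\supset S}\langle\lvert f\rvert\rangle_P^2(\ell S/\ell P)^{\eta/2}$ does not reassemble into a sparse form, so one must use the stopping structure as above; and to carry this out one needs \emph{two} independent decays — one to sum over the sandwiched cubes $R$ between $S$ and $R^{(r)}$, one to collapse the $g$–masses of the nested stopping cubes below each $V$ — obtained by splitting the single factor from \cref{lemma:estimates_for_K} via $\max(a,b)\ge\sqrt{ab}$.
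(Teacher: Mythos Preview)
Your argument has a genuine gap in the middle step. In \cref{sec:term3b} the stopping family $\mathscr{S}$ is $\mathscr{S}_g$, the family built from averages of $g$ only (the paper says so explicitly just before \cref{subsec:recover_decay}: ``with a slight abuse of notation, we omit the subscript in the stopping family $\mathscr{S}_g$''). Your bound $\lvert\langle\Delta_Pf\rangle_{P_S}\rvert\le\langle\lvert f\rvert\rangle_{P_S}+\langle\lvert f\rvert\rangle_P\lesssim\langle\lvert f\rvert\rangle_{\widehat{P_S}}$ appeals to ``the stopping inequalities'', but membership of $P$ and $P_S$ in $\Stop{\widehat{P_S}}$ controls only $g$-averages, not $f$-averages; the inequality $\langle\lvert f\rvert\rangle_{P_S}\lesssim\langle\lvert f\rvert\rangle_{\widehat{P_S}}$ is simply not available here. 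Without it, the quantity $\sum_{P\supset S}\lvert F_P\rvert(\ell S/\ell P)^{\eta/2}$ cannot be tethered to sparse $f$-averages, and your subsequent reorganisation by the cubes $V(P)$ has nothing to stand on in the case $F_P=\langle\Delta_Pf\rangle_{P_S}$.

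The paper sidesteps $f$-stopping altogether. It splits the decay multiplicatively along the nesting $\mU{S}\subset S\subset R\subset P_R$ as $(\ell\mU{S}/\ell P_R)^\eta\le(\ell S/\ell R)^\eta(\ell R/\ell P_R)^\eta$ (rather than your $\max\ge\sqrt{ab}$ split), applies Cauchy--Schwarz in $P$, and then reverses the order of summation to put $P$ outermost, arriving at $\sum_{P\in\mathcal{D}}F_P^2\int_P\lvert g\rvert$. For $F_P=\langle\Delta_Pf\rangle_{P_R}$ the elementary bound $F_P^2\le\langle f,h_P\rangle^2/\lvert P\rvert$ turns this into $B_0^{\mathcal{D}}(g,f)$, which is dominated by a sparse form via \cref{lemma:sparse_domination}; for $F_P=\langle f\rangle_P\1_{\{P_R\in\mathscr{S}\}}$ one reads off $\Lambda_{\mathscr{S}'}(f,g)$ directly. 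The idea you are missing is not to bound $\lvert F_P\rvert$ pointwise at all, but to keep $F_P^2$ intact and recognise $\sum_P\langle f,h_P\rangle^2\langle\lvert g\rvert\rangle_P$ as the already-controlled dyadic form $B_0^{\mathcal{D}}$; this is what makes the argument independent of any $f$-stopping.
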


    \begin{lemma}[Bound for $(ii)$]\label{lemma:bound_ii+iii}
      Let $F_P$ be either $\langle \Delta_P f \rangle_{P_S}$ or $\langle f \rangle_P \1_{\{P_S\in\mathscr{S}\}}$. Then 
      \begin{equation*}
        \sum_{S \in \mathscr{S}} \sum_{\mU{S} \in \maxStop{S}} \sum_{R\,:\,R \supset S}
        \iint_{W_R} \Big(\sum_{\substack{P\,:\,P\supset S\\P \subseteq  R^{(r)}}} \lvert F_P\rvert \cdot K_t^{\mU{S}}\1_{\mathbb{R}^d\setminus P_S}(x) \Big)^2 \frac{\D{t}}{t} \lvert\EuScript{P}_{\mU{S}}g\rvert \D{x}
        \lesssim \Lambda_{\mathscr{S}}(f,g) .
      \end{equation*}
    \end{lemma}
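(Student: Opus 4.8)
The plan is to replace the operator $K_t^{\mU{S}}$ by a scalar decay factor, collapse the $\mU{S}$‑sum using the $L^1$‑bound on the Haar projection, and then reorganise the resulting sum over the stopping tree.

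Since $\mU{S}\subset S\subseteq P_S$ and $P_S\subseteq P\subseteq R^{(r)}$, the first estimate of \cref{lemma:estimates_for_K} applied with $Q=P_S$ (for which $\max(\ell R^{(r)},\ell P_S)=\ell R^{(r)}$) gives, for $x\in\mU{S}$ and $t\in(\ell R/2,\ell R)$,
\[ K_t^{\mU{S}}\1_{\mathbb{R}^d\setminus P_S}(x)\lesssim\Big(\tfrac{\ell\mU{S}}{\ell R^{(r)}}\Big)^{\eta}. \]
This bound does not depend on $P$, so in order to run Cauchy--Schwarz in the $P$‑sum I would manufacture $P$‑decay by routing a fraction of the gap through $P_S$: writing $\eta=\tfrac\eta2+\tfrac\eta2$ and using $\ell\mU{S}\le\ell P_S\le\ell R^{(r)}$,
\[ \Big(\tfrac{\ell\mU{S}}{\ell R^{(r)}}\Big)^{\eta}\le\Big(\tfrac{\ell\mU{S}}{\ell R^{(r)}}\Big)^{\eta/2}\Big(\tfrac{\ell\mU{S}}{\ell P_S}\Big)^{\eta/2}\lesssim_r\Big(\tfrac{\ell S}{\ell R}\Big)^{\eta/2}2^{-(j-1)\eta/2}, \]
where $P=S^{(j)}$ (so $P_S=S^{(j-1)}$), using $\ell\mU{S}=2^{-r}\ell S$ and $\ell R^{(r)}=2^{r}\ell R$. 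The first factor will be summed over the ancestors $R=S^{(m)}$ of $S$ as a geometric series, the second over $j$.

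Substituting this, I would bound $\lvert F_{S^{(j)}}\rvert\le\langle\lvert f\rvert\rangle_{S^{(j)}}$ (for $F_P=\langle\Delta_P f\rangle_{P_S}$ this is $\lvert\langle f\rangle_{P_S}-\langle f\rangle_{P}\rvert\le\lVert\Delta_P f\rVert_{L^\infty}\le\fint_P\lvert f\rvert$ by \eqref{eq:bounds_L1_L_infinity}; for $F_P=\langle f\rangle_P\1_{\{P_S\in\mathscr{S}\}}$ it is immediate), apply Cauchy--Schwarz in $j$ against the summable weight $2^{-(j-1)\eta/2}$ (extending the $P$‑sum harmlessly to all $j\ge1$; the cubes $P\not\subseteq Q_0$ are immaterial as in \cref{subsec:telescoping}), bound $\int_{\ell R/2}^{\ell R}\mathrm{d}t/t\le1$ by taking the supremum in $t$, integrate $\lvert\EuScript{P}_{\mU{S}}g\rvert$ over its support $\mU{S}$, and sum over $\mU{S}\in\maxStop{S}$ via \eqref{eq:est_max_Haar}. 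After summing the geometric series in $R$, the left‑hand side is
\[ \lesssim\sum_{S\in\mathscr{S}}\Big(\int_S\lvert g\rvert\Big)\sum_{j\ge1}\langle\lvert f\rvert\rangle_{S^{(j)}}^{2}\,2^{-(j-1)\eta/2}. \]
It remains to convert averages of $\lvert f\rvert$ over the ancestors $S^{(j)}$ of $S$ into averages over stopping cubes. Let $S=S_0\subsetneq S_1\subsetneq\cdots$ be the chain of successive stopping ancestors of $S$. Given $j\ge1$ there is a unique $k\ge1$ with $S_{k-1}\subsetneq S^{(j)}\subseteq S_k$, and then $\widehat{S^{(j)}}=S_k$; since $S^{(j)}$ is contained in no proper stopping subcube of $S_k$, the stopping construction gives $\langle\lvert f\rvert\rangle_{S^{(j)}}\le A\langle\lvert f\rvert\rangle_{S_k}$. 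Moreover $S^{(j)}\supsetneq S_{k-1}\supseteq S$ forces $j\ge k$, so $\sum_{j:\widehat{S^{(j)}}=S_k}2^{-(j-1)\eta/2}\le\sum_{j\ge k}2^{-(j-1)\eta/2}\lesssim_\eta2^{-(k-1)\eta/2}$, whence
\[ \sum_{j\ge1}\langle\lvert f\rvert\rangle_{S^{(j)}}^{2}\,2^{-(j-1)\eta/2}\lesssim_\eta A^{2}\sum_{k\ge1}\langle\lvert f\rvert\rangle_{S_k}^{2}\,2^{-(k-1)\eta/2}. \]
Finally I would swap the order of summation: for fixed $T\in\mathscr{S}$ and $k\ge1$ the stopping cubes whose $k$‑th stopping ancestor is $T$ are pairwise disjoint subcubes of $T$, so their $\int_S\lvert g\rvert$ add up to at most $\int_T\lvert g\rvert$; summing the geometric series in $k$ gives the bound $\lesssim A^{2}\sum_{T\in\mathscr{S}}\langle\lvert f\rvert\rangle_T^{2}\int_T\lvert g\rvert=\Lambda_{\mathscr{S}}(f,g)$.

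The main obstacle is this last stage: because $F_P$ involves averages of $f$ over cubes \emph{larger} than $S$, no termwise estimate of the shape $\langle\lvert f\rvert\rangle_S^{2}\int_S\lvert g\rvert$ is available, and one must redistribute the mass over the stopping tree. The geometric $P$‑decay produced in the first paragraph, the strict nesting of stopping cubes (so $\ell S_k\ge2^{k}\ell S$), and the disjointness of the stopping descendants of a fixed stopping cube are exactly what make the three nested summations — over $R$, over $P\simeq S^{(j)}$, and over the stopping ancestors $S_k$ of $S$ — converge and collapse onto $\Lambda_{\mathscr{S}}(f,g)$; getting the bookkeeping of the three scales $\ell\mU{S}$, $\ell P_S$, $\ell R^{(r)}$ right so that this works is where the care is needed.
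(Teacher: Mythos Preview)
Your argument has a genuine gap in the final stage. In \cref{sec:term3b} the stopping family $\mathscr{S}$ is explicitly $\mathscr{S}_g$, the family built from the averages of $g$ alone (the paper says so just before \cref{subsec:recover_decay}: ``we omit the subscript in the stopping family $\mathscr{S}_g$''). Your key step ``the stopping construction gives $\langle\lvert f\rvert\rangle_{S^{(j)}}\le A\langle\lvert f\rvert\rangle_{S_k}$'' invokes an $f$--stopping condition that $\mathscr{S}_g$ simply does not carry: a cube in $\Stop{S_k}$ only has its $g$--average controlled, not its $f$--average. Without that control the sum $\sum_{j\ge1}\langle\lvert f\rvert\rangle_{S^{(j)}}^{2}\,2^{-(j-1)\eta/2}$ cannot be converted into a sparse expression in $f$; after swapping the order of summation you are left with $\sum_{P\in\mathcal{D}}\langle\lvert f\rvert\rangle_P^{2}\int_P\lvert g\rvert$, a sum over \emph{all} dyadic cubes, which is not dominated by any sparse form.

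The root cause is the early bound $\lvert F_P\rvert\le\langle\lvert f\rvert\rangle_P$: it discards exactly the structure that replaces the missing $f$--stopping condition. The paper instead keeps $F_P$ intact, distributes the decay as $(\ell\mU{S}/\ell R^{(r)})^{\eta}\le(\ell S/\ell R^{(r)})^{\eta/2}(\ell S/\ell P)^{\eta/2}(\ell P/\ell R^{(r)})^{\eta/2}$, applies Cauchy--Schwarz with the weight $(\ell P/\ell R^{(r)})^{\eta/2}$, and after summing in $R$ and in $S\subset P$ arrives at $\sum_{P\in\mathcal{D}}F_P^{2}\int_P\lvert g\rvert$. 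Only then does it split cases: for $F_P=\langle\Delta_P f\rangle_{P_S}$ one uses $F_P^{2}\le\langle f,h_P\rangle^{2}/\lvert P\rvert$ to land on $B_0^{\mathcal{D}}(g,f)$ (a genuine square--function quantity, bounded by a sparse form in \cref{sec:sparse_domination}); for $F_P=\langle f\rangle_P\1_{\{P_S\in\mathscr{S}\}}$ the indicator already restricts $P$ to dyadic parents of stopping cubes, which is sparse. In both branches the cancellation in $h_P$ or the sparse indicator does the work that your $f$--stopping step was meant to do.
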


        \begin{proof}[Proof of \cref{lemma:bound_far}]
          In this case $P \supset R \supset S$, so the dyadic child $P_S$ equals $P_R$.
          Using \cref{lemma:estimates_for_K}, since $\ell \mU{S} <\ell S$, we have
           \begin{equation*}
            K_t^{\mU{S}}\1_{\mathbb{R}^d\setminus P_R}(x) \lesssim \left(\frac{\ell \mU{S}}{\ell P_R}\right)^\eta
            \le \left(\frac{\ell S}{\ell R}\right)^{\eta} \left(\frac{\ell R}{\ell P_R}\right)^{\eta} .
          \end{equation*}
          We bound $\int_{\ell R/2}^{\ell R} \D{t}/t \le 1$ and then apply Cauchy--Schwarz         
         \begin{flalign*}
           \sum_{S \in \mathscr{S}} \sum_{\mU{S} \in \maxStop{S}} \sum_{R:R \supset S}
          \Bigg(\sum_{P\,:\,P \supset R^{(r)}} \lvert F_P\rvert \left(\frac{\ell S}{\ell R}\right)^{\eta} \left(\frac{\ell R}{\ell P_R}\right)^{\eta} \Bigg)^2 \lVert \EuScript{P}_{\mU{S}}g\rVert_{L^1} \\
          \le       \sum_{S \in \mathscr{S}} \sum_{\mU{S} \in \maxStop{S}} \sum_{R:R \supset S}                     
          \sum_{P\,:\,P \supset R^{(r)}} F_P^2 \left(\frac{\ell S}{\ell R}\right)^{2\eta} \left(\frac{\ell R}{\ell P_R}\right)^{\eta} \lVert\EuScript{P}_{\mU{S}}g\rVert_{L^1}
        \end{flalign*}
        since $\sum_{P\,:\,P \supset R^{(r)}} (\ell R/\ell P_R)^{\eta} \le 1$.
        Bound the sum of Haar projections as in \eqref{eq:est_max_Haar}
        \begin{gather*}
          \sum_{S \in \mathscr{S}} \sum_{\mU{S} \in \maxStop{S}} \sum_{R:R \supset S}                     
          \sum_{P:P \supset R^{(r)}} F_P^2 \left(\frac{\ell S}{\ell R}\right)^{2\eta} \left(\frac{\ell R}{\ell P_R}\right)^{\eta} \lVert\EuScript{P}_{\mU{S}}g\rVert_{L^1} \\
          \lesssim \sum_{S \in \mathscr{S}} \sum_{R:R \supset S}
          \sum_{P:P \supset R^{(r)}} F_P^2 \left(\frac{\ell S}{\ell R}\right)^{2\eta} \left(\frac{\ell R}{\ell P_R}\right)^{\eta} \int_S \lvert g\rvert .
        \end{gather*}
        Rearrange the sums
        \begin{equation*}
          \sum_{S \in \mathscr{S}} \sum_{\substack{R\in\mathcal{D}\\R \supset S}} \sum_{\substack{P\in\mathcal{D}\\P \supset R^{(r)}}}
          = \sum_{R\in\mathcal{D}} \sum_{P:P \supset R^{(r)}} \sum_{i\in\mathbb{N}} \sum_{\substack{S\in\mathscr{S}\\ S\subset R \\ \ell S=2^{-i}\ell R}}
        \end{equation*}
        then we continue as in the proof of \eqref{eq:mind_the_gap_PR}.
        \begin{flalign*}
          \sum_{R\in\mathcal{D}} & \sum_{P:P \supset R^{(r)}} F_P^2 \left(\frac{\ell R}{\ell P_R}\right)^{\eta} \sum_{i\in\mathbb{N}} \sum_{\substack{S\in\mathscr{S}\\ S\subset R \\ \ell S=2^{-i}\ell R}} \left(\frac{\ell S}{\ell R}\right)^{2\eta} \int_S \lvert g\rvert \\
          & \le \sum_{R\in\mathcal{D}} \sum_{P:P \supset R^{(r)}} F_P^2 \left(\frac{\ell R}{\ell P_R}\right)^{\eta} \sum_{i\in\mathbb{N}}2^{-i\eta} \int_R \lvert g\rvert \\
          & \le \sum_{P \in\mathcal{D}} F_P^2 \sum_{k\ge r} \sum_{\substack{R\,:\,R\subset P\\\ell R=2^{-k-1}\ell P}} \left(\frac{\ell R}{\ell P_R}\right)^{\eta} \int_R \lvert g\rvert \\
          & \le \sum_{P \in\mathcal{D}} F_P^2 \sum_{k\ge r} 2^{-k\eta} \int_P \lvert g\rvert \le \sum_{P \in\mathcal{D}} F_P^2 \int_P \lvert g\rvert .
        \end{flalign*}
        
        Now we distinguish the two cases for $F_P$. \vspace{5pt}
        
        \begin{minipage}[c]{0.4\textwidth}
          \[ \text{If } F_P = \langle \Delta_P f \rangle_{P_R} \]
          
          \begin{flalign*}
            \sum_{P \in\mathcal{D}} F_P^2 \int_P \lvert g\rvert
            & \le \sum_{P \in\mathcal{D}} \frac{\langle f,h_P \rangle^2}{\lvert P\rvert} \int_P \lvert g\rvert \\
            & \le 3^d  B_0^{\mathcal{D}}(g,f) .
          \end{flalign*}
          Then $ B_0^{\mathcal{D}}(g,f)$ is bounded by a sparse form in \cref{lemma:sparse_domination}.
        \end{minipage} %
        \quad \vline \quad
        \begin{minipage}[c]{0.4\textwidth}
          \[ \text{If } F_P = \langle f \rangle_P \1_{\{P_R\in\mathscr{S}\}} \]

          \begin{flalign*}
            \sum_{P \in\mathcal{D}} F_P^2 \int_P \lvert g\rvert
            & = \sum_{P\,:\,P_R\in\mathscr{S}} \langle f\rangle_P^2 \int_P \lvert g\rvert \\
            & = \Lambda_{\mathscr{S}'}(f,g)
          \end{flalign*}
          where $\mathscr{S}'$ is the sparse collection of dyadic parents of $\mathscr{S}$.
        \end{minipage}
        
      \end{proof}

      \begin{proof}[Proof of \cref{lemma:bound_ii+iii}]
        For $x\in \mU{S}$ and $t \in(\ell R/2, \ell R)$, since $\mU{S} \subset S \subset P \subseteq R^{(r)}$, by \cref{lemma:estimates_for_K} 
        \begin{equation*}
          K_t^{\mU{S}}(\1_{\mathbb{R}^d\setminus P_S})(x) \lesssim \left(\frac{\ell \mU{S}}{\ell R^{(r)}}\right)^{\eta} .
        \end{equation*}                
        Then  we distribute the decay factor which is bounded as following
        \begin{equation*}
          \left(\frac{\ell \mU{S}}{\ell R^{(r)}}\right)^\eta \le \left(\frac{\ell S}{\ell R^{(r)}}\right)^{\eta/2} \left(\frac{\ell S}{\ell P}\right)^{\eta/2}\left(\frac{\ell P}{\ell R^{(r)}}\right)^{\eta/2} .
        \end{equation*}
        Estimate the integral $\int_{\ell R/2}^{\ell R} \D{t}/t \le 1$ and the sum of Haar projections as in \eqref{eq:est_max_Haar}.       
        \begin{gather*}
          \sum_{S \in \mathscr{S}} \sum_{\mU{S} \in \maxStop{S}} \sum_{R \supset S}
          \iint_{W_R} \Bigg(\sum_{\substack{P:P\supset S\\P\subseteq R^{(r)}}} F_P \left(\frac{\ell \mU{S}}{\ell R^{(r)}}\right)^{\eta} \Bigg)^2 \frac{\D{t}}{t} \lvert\EuScript{P}_{\mU{S}}g\rvert \D{x} \\        
          \le \sum_{S\in\mathscr{S}} \sum_{\mU{S}\in\maxStop{S}}\lVert \EuScript{P}_{\mU{S}} g\rVert_{L^1} \sum_{R\,:\,R\supset S} \left(\frac{\ell S}{\ell R^{(r)}}\right)^{\eta} \Bigg(\sum_{\substack{P:P\supset S\\P\subseteq R^{(r)}}} F_P \left(\frac{\ell S}{\ell P}\right)^{\eta/2}\left(\frac{\ell P}{\ell R^{(r)}}\right)^{\eta/2} \Bigg)^2  \\
          \lesssim \sum_{S\in\mathscr{S}} \int_S\lvert g\rvert  \sum_{R\,:\,R\supset S} \left(\frac{\ell S}{\ell R^{(r)}}\right)^{\eta} \Bigg( \sum_{\substack{P:P\supset S\\P\subseteq R^{(r)}}}F_P \left(\frac{\ell S}{\ell P}\right)^{\eta/2}\left(\frac{\ell P}{\ell R^{(r)}}\right)^{\eta/2} \Bigg)^2 .
        \end{gather*}
        Apply Cauchy--Schwarz.
        \begin{flalign*}
          \sum_{S\in\mathscr{S}} & \int_S\lvert g\rvert  \sum_{R\,:\,R\supset S} \left(\frac{\ell S}{\ell R^{(r)}}\right)^{\eta} \Bigg(\sum_{\substack{P:P\supset S\\P\subseteq R^{(r)}}}F_P \left(\frac{\ell S}{\ell P}\right)^{\eta/2}\left(\frac{\ell P}{\ell R^{(r)}}\right)^{\eta/2} \Bigg)^2 \\
          & \le \sum_{S\in\mathscr{S}} \int_S\lvert g\rvert  \sum_{R\,:\,R\supset S} \left(\frac{\ell S}{\ell R^{(r)}}\right)^{\eta} \sum_{\substack{P:P\supset S\\P\subseteq R^{(r)}}} F_P^2\left(\frac{\ell S}{\ell P}\right)^{\eta} \cdot \sum_{\substack{P:P\supset S\\P\subseteq R^{(r)}}} \left(\frac{\ell P}{\ell R^{(r)}}\right)^{\eta}
        \end{flalign*}
        The last sum is finite:
          since $P \supset S$ there is only one ancestor for each generation.
          Since all terms are non-negative, we bound by removing the restriction $P\subset R^{(r)}$ in the sum in $P$.
          \begin{flalign*}
            \sum_{S\in\mathscr{S}} \int_S\lvert g\rvert  \sum_{R\,:\,R\supset S} \left(\frac{\ell S}{\ell R^{(r)}}\right)^{\eta} \sum_{P:P\supset S} F_P^2 \left(\frac{\ell S}{\ell P}\right)^{\eta}
            & \le \sum_{S\in\mathscr{S}} \int_S\lvert g\rvert  \sum_{P:P\supset S} F_P^2 \left(\frac{\ell S}{\ell P}\right)^{\eta} \\
            & = \sum_{P\in\mathcal{D}} F_P^2 \sum_{i\in\mathbb{N}} 2^{-i\eta} \sum_{\substack{S\in\mathscr{S}\\ S\subset P\\ \ell S=2^{-i}\ell P}} \int_S\lvert g\rvert  \\
            & \le \sum_{P\in\mathcal{D}} F_P^2 \int_P\lvert g\rvert \sum_{i\in\mathbb{N}} 2^{-i\eta} .
          \end{flalign*}
          The two cases for $F_P$ are as at the end of the proof of \cref{lemma:bound_far}.
        \end{proof}
            
    \section{Sparse domination of the dyadic form}\label{sec:sparse_domination}
    In this section we prove a sparse domination of the dyadic form $B_j^{\mathcal{D}}(g,f)$ defined in \eqref{eq:def_B_j}.
     
    Writing $1$ as $\langle \1_P \rangle_P$  we have
    \begin{equation*}
      B_j^{\mathcal{D}}(g,f) = \int_{\mathbb{R}^d} \sum_{K\in\mathcal{D}} \langle \lvert g\rvert \rangle_{3K}  \sum_{\substack{P\in\mathcal{D}\\P\subset 3K\\\ell P=2^{-j}\ell K}} \frac{\langle f, h_P \rangle^2}{\lvert P\rvert} \1_P(x) \D{x}. 
    \end{equation*}

    Let $Q_0$ be a dyadic cube containing the support of $f$ and $g$.
    On the complement of $Q_0$ the form is controlled.
    \begin{lemma}\label{lemma:control_away_from_Q_0}
      Let $B_j^{\mathcal{D}}\restriction_{Q_0^\complement}(g,f)$ be the restriction of $B_j^{\mathcal{D}}(g,f)$ to the complement $(Q_0)^{\complement}$, then
      \begin{equation*}
        B_j^{\mathcal{D}}\restriction_{Q_0^\complement}(g,f) \lesssim_d 2^{-j d} \langle \lvert g\rvert\rangle_{Q_0} \langle \lvert f\rvert \rangle_{Q_0}^2 \lvert Q_0 \rvert . 
      \end{equation*}
    \end{lemma}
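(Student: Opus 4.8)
The plan is to exploit the compact support of $f$ and $g$, which is what makes this ``tail'' piece of the form negligible. Recall from the display just before the lemma that
\[
  B_j^{\mathcal{D}}\restriction_{Q_0^\complement}(g,f) = \int_{Q_0^\complement} \sum_{K\in\mathcal{D}} \langle \lvert g\rvert \rangle_{3K}  \sum_{\substack{P\in\mathcal{D}\\P\subset 3K\\\ell P=2^{-j}\ell K}} \frac{\langle f, h_P \rangle^2}{\lvert P\rvert} \1_P(x) \D{x} .
\]
First I would note that a term can survive only if $\langle f,h_P\rangle\neq 0$, which forces $P\cap Q_0\neq\emptyset$, hence (dyadic cubes being nested or disjoint) $P\subseteq Q_0$ or $P\supseteq Q_0$; and it survives the integration over $Q_0^\complement$ only if $P\setminus Q_0\neq\emptyset$. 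Together these force $P\supsetneq Q_0$, i.e. $P=Q_0^{(k)}$ for some $k\ge 1$. For such a $P$, the support of $f$ lies inside a single dyadic child of $P$, on which $\lvert h_P\rvert=\lvert P\rvert^{-1/2}$, so $\lvert\langle f,h_P\rangle\rvert\le \lvert P\rvert^{-1/2}\int_{Q_0}\lvert f\rvert = 2^{-kd/2}\lvert Q_0\rvert^{1/2}\langle\lvert f\rvert\rangle_{Q_0}$ and hence $\langle f,h_P\rangle^2\le 2^{-kd}\lvert Q_0\rvert\langle\lvert f\rvert\rangle_{Q_0}^2$. Moreover $\int_{Q_0^\complement}\1_P\le\lvert P\rvert$, which cancels the factor $\lvert P\rvert^{-1}$.

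Next I would count and estimate the cubes $K$. For fixed $P=Q_0^{(k)}$, the relevant $K$ satisfy $\ell K=2^{j}\ell P=2^{j+k}\ell Q_0$ and $3K\supset P$. Since $P^{(j)}$ is the unique dyadic cube of side $\ell K$ containing $P$ and $3K$ is the union of $P^{(j)}$ (when $K=P^{(j)}$) together with its $3^d-1$ dyadic neighbours, the condition $3K\supset P$ is equivalent to $3K\supset P^{(j)}$, which holds for at most $3^d$ cubes $K$. For each of them $Q_0\subset P\subset 3K$, and since $g$ is supported in $Q_0$ we get $\langle\lvert g\rvert\rangle_{3K}=\frac{1}{\lvert 3K\rvert}\int_{Q_0}\lvert g\rvert=\frac{\lvert Q_0\rvert}{3^d\lvert K\rvert}\langle\lvert g\rvert\rangle_{Q_0}=3^{-d}2^{-(j+k)d}\langle\lvert g\rvert\rangle_{Q_0}$.

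Finally I would assemble the pieces. Summing the contributions of $P=Q_0^{(k)}$,
\[
  B_j^{\mathcal{D}}\restriction_{Q_0^\complement}(g,f) \le \sum_{k\ge 1}\langle f,h_{Q_0^{(k)}}\rangle^2\!\!\sum_{\substack{K:\ \ell K=2^{j+k}\ell Q_0\\ 3K\supset Q_0^{(k)}}}\!\!\langle\lvert g\rvert\rangle_{3K} \lesssim_d \sum_{k\ge 1} 2^{-kd}\lvert Q_0\rvert\langle\lvert f\rvert\rangle_{Q_0}^2\cdot 2^{-(j+k)d}\langle\lvert g\rvert\rangle_{Q_0},
\]
and the geometric series $\sum_{k\ge 1}2^{-2kd}\le 1$ leaves $B_j^{\mathcal{D}}\restriction_{Q_0^\complement}(g,f)\lesssim_d 2^{-jd}\langle\lvert g\rvert\rangle_{Q_0}\langle\lvert f\rvert\rangle_{Q_0}^2\lvert Q_0\rvert$, as claimed. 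There is no genuine obstacle here; the only point needing care is to track the two independent sources of $2^{-kd}$ decay — one from the Haar coefficient of the far ancestor $Q_0^{(k)}$ of $Q_0$, the other from the average of the compactly supported $g$ over the large cube $3K$ — so that the sum over $k$ is summable and the $2^{-jd}$ gain is isolated.
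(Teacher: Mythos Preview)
Your proof is correct and follows essentially the same approach as the paper: both identify that the only contributing cubes are the strict ancestors $P=Q_0^{(k)}$, then exploit the compact support of $f$ and $g$ to get geometric decay in $k$ and the $2^{-jd}$ gain. Your version is in fact slightly more streamlined, since you integrate over $Q_0^\complement$ directly and bound $\int_{Q_0^\complement}\1_P\le\lvert P\rvert$, whereas the paper first decomposes $Q_0^\complement$ into the annuli $Q_0^{(k+1)}\setminus Q_0^{(k)}$ and carries an extra summation index.
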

    \begin{proof}
      Decompose $(Q_0)^{\complement}$ in the union of $Q_0^{(k+1)} \setminus Q_0^{(k)}$ for $k\in\mathbb{Z}_+$.
      The non-zero terms in $B_j^{\mathcal{D}}\restriction_{Q_0^\complement}(g,f)$ are the ones where $P$ intersects $Q_0$ and $(Q_0^{(k)})^\complement$.
      Then $P \supset Q_0^{(k)}$ and in particular $P = Q_0^{(m)}$ for $m > k$.
      There is only one ancestor for each $m$, so we have
      \begin{flalign*}
        B_j^{\mathcal{D}}\restriction_{Q_0^{(k+1)} \setminus Q_0^{(k)}}(g,f) & = \int_{Q_0^{(k+1)} \setminus Q_0^{(k)}} \sum_{K\in\mathcal{D}} \langle\lvert g\rvert\rangle_{3K} \sum_{\substack{P\subset 3K\\\ell P = 2^{-j}\ell K\\P\supset Q_0^{(k)}}} \Bigg(\frac{\langle f,h_P\rangle}{\lvert P\rvert^{1/2}}\Bigg)^2 \1_P(x) \D{x} \\
        & \lesssim \sum_{m=k+1}^\infty\langle\lvert g\rvert\rangle_{3Q_0^{(m+j)}} \langle\lvert f\rvert\rangle_{Q_0^{(m)}}^2 \lvert Q_0^{(m)}\rvert \\
        & = \sum_{m=k+1}^\infty 3^{-d} 2^{-(m+j)d} \langle\lvert g\rvert\rangle_{Q_0} 2^{-2md} \langle\lvert f\rvert\rangle_{Q_0}^2 2^{md} \lvert Q_0\rvert \\
        & \le 2^{-jd} \langle\lvert g\rvert\rangle_{Q_0}\langle\lvert f\rvert\rangle_{Q_0}^2 \lvert Q_0\rvert \sum_{m=k+1}^\infty 2^{-2md} .
      \end{flalign*}      
      The last sum is bounded by $2^{-k d}$ and summing over $k\in\mathbb{Z}_+$ concludes the proof.
    \end{proof}

    It's enough to construct a sparse family inside $Q_0$.
    Taking the supremum of $\langle\lvert g\rvert\rangle_{3K}$ over all $K\in\mathcal{D}$  we have
    \begin{equation*}
      B_j^{\mathcal{D}}(g,f) \le  \int M^{3\mathcal{D}}g(x) \cdot (S_j^{3\mathcal{D}} f(x))^2 \D{x}
    \end{equation*}
    where $M^{3\mathcal{D}}$ and $S_j^{3\mathcal{D}}$ denote the maximal function and the  square function given by
     \begin{equation}\label{eq:def_S3_and_M}
       M^{3\mathcal{D}}f \coloneqq \sup_{Q \in \mathcal{D}} \langle \lvert f \rvert \rangle_{3Q} \1_{3Q} , \qquad
       \big(S_j^{3\mathcal{D}} f(x)\big)^2 \coloneqq \sum_{R\in\mathcal{D}} \sum_{\substack{P\in\mathcal{D}\\P \subset 3R \\ \ell P = 2^{-j}\ell R}}\frac{\langle f,h_P\rangle^2 }{\lvert P \rvert} \1_P(x) .       
     \end{equation}
     As we see below, $S_j^{3\mathcal{D}}$ is pointwise
     controlled by the square function $S_j^{\mathcal{D}}f(x)$ given by
     \begin{equation*}%
       \big(S^{\mathcal{D}}_j f (x)\big)^2 \coloneqq \sum_{Q \in \mathcal{D}} \sum_{\substack{P\in\mathcal{D}\\P \subset Q \\ \ell P = 2^{-j} \ell Q}} \frac{\langle f , h_P \rangle^2 }{\lvert P \rvert} \1_P(x).
     \end{equation*}     

     \begin{prop}[Pointwise control]\label{prop:enlarged_control}
      Let $f \in L^2(\mathbb{R}^d)$ and $j \in \mathbb{N}_0$. For all $x \in \mathbb{R}^d$ it holds that
      \begin{equation*}
        S^{\mathcal{D}}_j f(x) \le S_j^{3\mathcal{D}}f(x) \le 3^{d/2} S^{\mathcal{D}}_j f(x)
      \end{equation*}
    \end{prop}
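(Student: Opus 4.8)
The plan is to prove the stronger pointwise identity
\[
  \big(S_j^{3\mathcal{D}} f(x)\big)^2 = 3^d \big(S_j^{\mathcal{D}} f(x)\big)^2 ,
\]
from which both asserted inequalities follow at once: the lower bound because $3^d \ge 1$, and the upper bound after taking square roots.

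First I would record the elementary geometric fact that, for a dyadic cube $R$ of side length $2^{-k}$, the dilate $3R$ is the \emph{disjoint} union of the $3^d$ dyadic cubes $R_v \coloneqq R + 2^{-k} v$ with $v \in \{-1,0,1\}^d$, each of side length $2^{-k}$ and one of them equal to $R$ itself. Indeed, writing $R = 2^{-k}(m + [0,1)^d)$ gives $R_v = 2^{-k}(m + v + [0,1)^d) \in \mathcal{D}_k$, and $\bigcup_v R_v = 2^{-k}(m + [-1,2)^d) = 3R$. Consequently, if $P \in \mathcal{D}$ satisfies $\ell P = 2^{-j}\ell R$ and $P \subset 3R$, then, since $\ell P \le \ell R$ and distinct dyadic cubes are nested or disjoint, $P$ lies in exactly one of the $R_v$, say $R_{v(P)}$, and then $\ell P = 2^{-j}\ell R_{v(P)}$; conversely any $P \subset R_v$ with $\ell P = 2^{-j}\ell R_v$ obeys $P \subset 3R$ and $\ell P = 2^{-j}\ell R$. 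Thus for each fixed $R$,
\[
  \sum_{\substack{P \in \mathcal{D} \\ P \subset 3R \\ \ell P = 2^{-j}\ell R}} \frac{\langle f, h_P\rangle^2}{\lvert P\rvert}\1_P(x)
  = \sum_{v \in \{-1,0,1\}^d}\;\sum_{\substack{P \in \mathcal{D} \\ P \subset R_v \\ \ell P = 2^{-j}\ell R_v}} \frac{\langle f, h_P\rangle^2}{\lvert P\rvert}\1_P(x) .
\]

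Next I would sum this over $R \in \mathcal{D}$, organised by generation $\mathcal{D}_k$. For each fixed $v$ the translation $R \mapsto R_v = R + 2^{-k}v$ is a bijection of $\mathcal{D}_k$, so summing the $v$-th inner term over $R \in \mathcal{D}_k$ reproduces $\sum_{Q \in \mathcal{D}_k}\sum_{P \subset Q,\ \ell P = 2^{-j}\ell Q}\langle f,h_P\rangle^2 \lvert P\rvert^{-1}\1_P(x)$; summing over $k$ and over the $3^d$ values of $v$ then gives exactly $3^d \big(S_j^{\mathcal{D}} f(x)\big)^2$, which is the identity above. The argument is purely combinatorial and I do not expect a genuine obstacle; the only points needing care are the decomposition of $3R$ into $3^d$ same-scale dyadic cubes, the fact that $R \mapsto R + 2^{-k}v$ preserves $\mathcal{D}_k$, and that the correspondence $(R,P)\leftrightarrow(R_{v(P)},P)$ is uniform in $j$ — in particular it stays valid for $j = 0$, where $P$ is simply one of the neighbours of $R$.
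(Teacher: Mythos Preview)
Your proposal is correct and follows essentially the same argument as the paper: decompose $3R$ into $3^d$ dyadic cubes $R_a$ of the same side length, split the inner sum over $P\subset 3R$ accordingly, and observe that summing over $R\in\mathcal{D}$ each of the $3^d$ pieces reproduces exactly $(S_j^{\mathcal{D}}f(x))^2$, yielding the identity $(S_j^{3\mathcal{D}}f(x))^2 = 3^d(S_j^{\mathcal{D}}f(x))^2$. Your write-up is in fact more explicit than the paper's about the bijection $R\mapsto R+2^{-k}v$ on $\mathcal{D}_k$, which is the only point that needs care.
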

    \begin{proof}
      The enlarged cube $3R$ is the union of $3^d$ cubes $\{R_a\}_a$ in the same dyadic grid $\mathcal{D}$. So
      \begin{align*}
        \left(S_j^{3\mathcal{D}} f(x)\right)^2 &= \sum_{R\in\mathcal{D}} \sum_{\substack{P\,:\,P\subset 3R\\\ell P=2^{-j}\ell R}} \langle f,h_P\rangle^2 \frac{\1_P(x)}{\lvert P\rvert} \\
                   &= \sum_{R\in\mathcal{D}} \sum_{a=1}^{3^d} \sum_{\substack{P\,:\,P\subset R_a \\\ell P=2^{-j}\ell R_a}} \langle f,h_P\rangle^2 \frac{\1_P(x)}{\lvert P\rvert} \\
                   &=  \sum_{a=1}^{3^d} \left(S_j^{\mathcal{D}}f(x)\right)^2 \le 3^d \left(S_j^{\mathcal{D}}f(x)\right)^2.
      \end{align*}
    \end{proof}
    
    We show that the square function $S_j^{\mathcal{D}}$ satisfies a weak $(1,1)$ bound.
    The proof follows the one for dyadic shifts without separation of scales \cite[Theorem 5.2]{SharpDyadicShift}
    and \cite[Lemma 4.4]{LaceyMena}.
    \begin{prop}\label{prop:weak_bound_S}
      Let $j \in \mathbb{Z}_+$. There exists $C>0$ such that for any $f\in L^1(\mathbb{R}^d)$ it holds that
      \begin{equation*}
        \sup_{\lambda >0} \lambda \lvert \{ x\in\mathbb{R}^d\,:\, S_j^{\mathcal{D}} f(x) > \lambda \} \rvert \le C (1+j) \lVert f \rVert_{L^1} .        
      \end{equation*}
      In particular $ \lVert S_j^{\mathcal{D}} \rVert_{L^1 \to L^{1,\infty}}$ grows at most polynomially in $j$.
    \end{prop}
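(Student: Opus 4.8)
The plan is to prove the stronger statement that $S_j^{\mathcal{D}}$ is of weak type $(1,1)$ with a bound \emph{independent} of $j$; the claimed polynomial growth is then a fortiori true. I would follow the Calderón--Zygmund recipe used for dyadic shifts in \cite{SharpDyadicShift,LaceyMena}. Fix $f\in L^1(\mathbb{R}^d)$ and $\lambda>0$ and apply \cref{prop:C-Z_r} with $r=0$ at height $\lambda$: this produces disjoint maximal dyadic cubes $\mathcal{L}$ with $\sum_{L\in\mathcal{L}}\lvert L\rvert\le\lambda^{-1}\lVert f\rVert_{L^1}$ and a splitting $f=g+b$ with $\lVert g\rVert_{L^\infty}\le 2^{d}\lambda$, $\lVert g\rVert_{L^1}\le\lVert f\rVert_{L^1}$, and $b=\sum_{L\in\mathcal{L}}b_L$, where each $b_L=(f-\langle f\rangle_L)\1_L$ has zero average and is supported on $L$. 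Write $\Omega\coloneqq\bigcup_{L\in\mathcal{L}}L$, so that $\lvert\Omega\rvert\le\lambda^{-1}\lVert f\rVert_{L^1}$. Since $S_j^{\mathcal{D}}$ is sublinear (Minkowski's inequality for $\ell^2$ sums of Haar coefficients gives $S_j^{\mathcal{D}}(f_1+f_2)\le S_j^{\mathcal{D}}f_1+S_j^{\mathcal{D}}f_2$ pointwise), it suffices to estimate $\lvert\{S_j^{\mathcal{D}}g>\lambda\}\rvert$ and $\lvert\{S_j^{\mathcal{D}}b>\lambda\}\rvert$.

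For the good part I would use that for every $P\in\mathcal{D}$ the unique $Q$ with $P\subset Q$ and $\ell P=2^{-j}\ell Q$ is the $j$-th ancestor $P^{(j)}$, so that the double sum defining $(S_j^{\mathcal{D}}g)^2$ reduces to $\sum_{P\in\mathcal{D}}\langle g,h_P\rangle^2\lvert P\rvert^{-1}\1_P$; Bessel's inequality then gives $\lVert S_j^{\mathcal{D}}g\rVert_{L^2}^2=\sum_{P\in\mathcal{D}}\langle g,h_P\rangle^2\le\lVert g\rVert_{L^2}^2\le\lVert g\rVert_{L^\infty}\lVert g\rVert_{L^1}\lesssim_d\lambda\lVert f\rVert_{L^1}$. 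Chebyshev's inequality yields $\lvert\{S_j^{\mathcal{D}}g>\lambda\}\rvert\le\lambda^{-2}\lVert S_j^{\mathcal{D}}g\rVert_{L^2}^2\lesssim_d\lambda^{-1}\lVert f\rVert_{L^1}$.

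For the bad part I would show that $S_j^{\mathcal{D}}b$ vanishes identically on $\Omega^{\complement}$, so that $\lvert\{S_j^{\mathcal{D}}b>\lambda\}\rvert\le\lvert\Omega\rvert\le\lambda^{-1}\lVert f\rVert_{L^1}$. Fix $x\notin\Omega$ and $P\in\mathcal{D}$ with $x\in P$; I claim $\langle b,h_P\rangle=\sum_{L\in\mathcal{L}}\langle b_L,h_P\rangle=0$. A summand is nonzero only if $P\cap L\neq\emptyset$, and by dyadic nesting this forces $P\subseteq L$ or $L\subsetneq P$. The first case is impossible since it would give $x\in P\subseteq L\subseteq\Omega$; in the second case $L$ lies inside a single dyadic child of $P$, on which $h_P$ is constant, so $\langle b_L,h_P\rangle=(h_P\restriction_{L})\int_L b_L=0$ by the zero average of $b_L$. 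Hence every Haar coefficient of $b$ seen by $x$ vanishes and $(S_j^{\mathcal{D}}b(x))^2=\sum_{P\ni x}\langle b,h_P\rangle^2\lvert P\rvert^{-1}=0$.

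Adding the two contributions gives $\lambda\,\lvert\{x:S_j^{\mathcal{D}}f(x)>2\lambda\}\rvert\lesssim_d\lVert f\rVert_{L^1}$ with an absolute constant, and rescaling $\lambda$ and taking the supremum over $\lambda$ proves the proposition, even with $C(1+j)$ replaced by $C=C(d)$. The step to get right is the bad-part estimate: the naive move of replacing $\Omega$ by the union of the $j$-th ancestors of the cubes in $\mathcal{L}$ would inflate the exceptional set by a factor $2^{jd}$ and ruin the bound, so one must instead use the cancellation $\int_L b_L=0$ together with the fact that Haar functions are constant below their own scale; this is precisely the point at which the cited references invest, and even a cruder treatment of this step would still deliver the weaker bound $\lesssim(1+j)\lVert f\rVert_{L^1}$, which is all that is needed for the convergence of the series $\sum_j 2^{-cj}\lVert S_j^{\mathcal{D}}\rVert_{L^1\to L^{1,\infty}}$ in the subsequent sparse domination of $B_j^{\mathcal{D}}$.
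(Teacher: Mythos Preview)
Your proof is correct and in fact sharper than the paper's. Both arguments start from the same Calder\'on--Zygmund decomposition and handle the good part identically via the $L^2$ bound. The difference is in the bad part: the paper bounds $\lVert S_j^{\mathcal{D}} b_Q\rVert_{L^1(E^\complement)}$ by passing from the $\ell^2$ sum to an $\ell^1$ sum, then restricts to ancestors $K=Q^{(k)}$ with $1\le k\le j$ (for $k>j$ the Haar pairing vanishes by the mean-zero of $b_Q$), and picks up the factor $j$ from counting these generations. You instead exploit the observation---implicit in your treatment of the good part---that for each $P$ there is exactly one $Q=P^{(j)}$ in the outer sum, so $(S_j^{\mathcal{D}}f)^2=\sum_{P\in\mathcal{D}}\langle f,h_P\rangle^2\lvert P\rvert^{-1}\1_P$ is the ordinary dyadic square function, independent of $j$; then the standard cancellation argument shows $S_j^{\mathcal{D}}b\equiv 0$ off $\Omega$, with no loss at all. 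What you gain is a $j$-free constant; what the paper's route buys is robustness---the same template works for the genuinely $j$-dependent forms $B_j^{\mathcal{D}}$ and for dyadic shifts where no such collapse to a single sum occurs. For the purposes of \cref{lemma:sparse_domination} either bound suffices, since only polynomial growth in $j$ is needed against the exponential decay $2^{-cj}$.
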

    
    \begin{proof}%
      First, $S_j^{\mathcal{D}}$ is bounded in $L^2$ with norm independent of $j$. %

      We want to show that for any $\lambda>0$ we have
      \begin{equation*}
        \lvert \{ x\in \mathbb{R}^d \,:\, S^{\mathcal{D}}_j f(x) > \lambda \} \rvert \le C \frac{\lVert f \rVert_1}{\lambda} . 
      \end{equation*}
      Let $f=g+b$ be the Calderón-Zygmund decomposition of $f$ at height $\lambda > 0$.
      Then $\lVert g \rVert_\infty \leq 2^d \lambda$ and in particular $\lVert g\rVert_2^2 \lesssim \lambda \lVert f\rVert_1 $, while
      $b = \sum_{Q \in \mathcal{L}} b_Q$, where $b_Q$ is supported on $Q$ and $\int b_Q =0$.
      The cubes $Q$ in $\mathcal{L}$ are maximal dyadic cubes 
      such that $\lambda < \langle\lvert f \rvert \rangle_Q \le 2^d \lambda$.
     
      Let $E$ be the union of the cubes in $\mathcal{L}$.
      Then $ \lvert E \rvert = \sum_{Q \in \mathcal{L}} \lvert Q \rvert \le \lambda^{-1} \lVert f \rVert_1 $ so
      it is enough to estimate the superlevel sets on the complement of $E$. Using the decomposition of $f$ we have
      \begin{align*}
        \lvert \{ x \in E^\complement \colon S_j^{\mathcal{D}}f(x) > \lambda \} \rvert &\le  \left\lvert \left\{ x \colon S_j^{\mathcal{D}} g(x) > \frac{\lambda}2 \right\} \right\rvert
        + \left\lvert \left\{ x \in E^\complement \colon S_j^{\mathcal{D}} b(x) > \frac{\lambda}2 \right\} \right\rvert \\
        & \lesssim_d \frac{\lVert f \rVert_1}{\lambda}
        +\frac{2}{\lambda} \lVert S_j^{\mathcal{D}} b \rVert_{L^1(E^{\complement})} .
      \end{align*}
      The last bound follows by using Chebyshev's inequality for the good part:
      \begin{equation*}
        \left\lvert \left\{ S_j^{\mathcal{D}} g > \frac{\lambda}2 \right\} \right\rvert \le \frac{4}{\lambda^2} \lVert S_j^{\mathcal{D}} g \rVert_2^2 \lesssim \frac{\lVert g \rVert_2^2}{\lambda^2} \lesssim \frac{\lVert f \rVert_1}{\lambda}
      \end{equation*}
      and Markov's inequality for the bad part. The sublinearity of $S_j^{\mathcal{D}}$ and the triangle inequality imply that
      \begin{equation*}
        \lVert S_j^{\mathcal{D}} b \rVert_{L^1(E^{\complement})} \le \sum_{Q\in\mathcal{L}} \lVert S_j^{\mathcal{D}} b_Q \rVert_{L^1(E^{\complement})} .
      \end{equation*}
      For each $Q\in\mathcal{L}$, only dyadic cubes $K\supset Q$ contribute to the norm $\lVert S_j^{\mathcal{D}} b_Q \rVert_{L^1(E^\complement)}$,
      since if $K\subseteq Q$, then $K$ would be inside $E$.
      Thus $K$ is an ancestor of $Q$, so $K=Q^{(k)}$ for some integer $k\ge 1$.
      For $k>j$ each $j$-child $P\subset K$ contains $Q$, and so $\langle b_Q ,h_P\rangle$ vanishes,
      by the zero average of $b_Q$. Thus we estimate
      \begin{align*}
        \lVert S_j^{\mathcal{D}} b_Q\rVert_{L^1(E^{\complement})} & \le \int_{E^{\complement}} \sum_{K\in\mathcal{D}} \sum_{\substack{P \subset K \\ \ell P=2^{-j}\ell K}} \lvert\langle b_Q,h_P \rangle\rvert \frac{\1_P(x)}{\lvert P \rvert^{1/2}} \D{x} \\
                                                            & \le \sum_{k=1}^j \sum_{\substack{K\supset Q \\ \ell K=\ell Q^{(k)}}} \sum_{\substack{P \subset K \\ \ell P =2^{-j} \ell K}} \lvert \langle b_Q,h_P \rangle\rvert  \lvert P \rvert^{1/2} \\
                                                            & \le \sum_{k=1}^j \sum_{\substack{K\supset Q \\ \ell K=\ell Q^{(k)}}} \sum_{\substack{P \subset K \\ \ell P =2^{-j} \ell K}} \lVert b_Q \rVert_{L^1(P)} \\
                                                            & \le \sum_{k=1}^j \sum_{\substack{K\supset Q \\ \ell K=\ell Q^{(k)}}} \lVert b_Q \rVert_{L^1(K)} = \sum_{k=1}^j \lVert b_Q \rVert_{L^1}.
      \end{align*}
      Since $\lVert b_Q \rVert_{L^1(K)} = \lVert b_Q \rVert_{L^1} \lesssim \lambda \lvert Q \rvert < \int_Q \lvert f\rvert$, and
      there is only one ancestor of $Q$ for each $k$, we have
      \begin{align*}
        \sum_{k=1}^j \lVert b_Q \rVert_{L^1} \lesssim \sum_{k=1}^j \int_Q\lvert f \rvert \le j \int_Q\lvert f\rvert .
      \end{align*}
      Summing over all $Q\in\mathcal{L}$ gives the bound
      \begin{equation*}
        \sum_{Q\in\mathcal{L}} \lVert S_j^{\mathcal{D}} b_Q \rVert_{L^1(E^{\complement})} \lesssim \sum_{Q\in\mathcal{L}} j \lVert f \rVert_{L^1(Q)} \le j \lVert f\rVert_{L^1(\mathbb{R}^d)} .
      \end{equation*}
    \end{proof}

    The operator $M^{3\mathcal{D}}$ defined in \eqref{eq:def_S3_and_M} is also  weak $(1,1)$ as it is
    bounded by the Hardy--Littlewood maximal function, which is weakly bounded.

    The following lemma exploits the weak boundedness of the operators $M^{3\mathcal{D}}$ and $S_j^{\mathcal{D}}$ to construct a sparse collection $\mathscr{S}$.
    The proof adapts the one in \cite[Lemma 4.5]{LaceyMena} to our square function. We include the details for the convenience of the reader.
    
    \begin{lemma}[Sparse domination of $B_j^{\mathcal{D}}$] \label{lemma:sparse_domination}
      Let $j\in \mathbb{Z}_+$. For any pair of compactly supported functions $f,g\in L^\infty(\mathbb{R}^d)$
      there exists a sparse collection $\mathscr{S}$ such that
      \begin{equation*}
        B_j^{\mathcal{D}}(g,f) \lesssim \int M^{3\mathcal{D}}g \cdot (S_j^{\mathcal{D}}f)^2 \lesssim (1+j)^2 \sum_{S \in \mathscr{S}} \langle \lvert f\rvert\rangle_S^2 \langle \lvert g\rvert\rangle_S \lvert S\rvert
      \end{equation*}
      where the implicit constant does not depend on $j$.
    \end{lemma}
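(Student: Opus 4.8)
The first inequality is immediate: if $x\in P\subset 3K$ then $\langle\lvert g\rvert\rangle_{3K}\le M^{3\mathcal{D}}g(x)$, so $B_j^{\mathcal{D}}(g,f)\le\int M^{3\mathcal{D}}g\cdot(S_j^{3\mathcal{D}}f)^2\,\D{x}$, and $S_j^{3\mathcal{D}}\le 3^{d/2}S_j^{\mathcal{D}}$ by \cref{prop:enlarged_control}. The content is the second inequality, for which the plan is to run a stopping-time construction in the spirit of \cite[Lemma 4.5]{LaceyMena}, using the weak $(1,1)$ bounds for $M^{3\mathcal{D}}$ (from the Hardy--Littlewood maximal function) and for $S_j^{\mathcal{D}}$ from \cref{prop:weak_bound_S}, the latter being responsible for the $(1+j)$ loss.

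First I would fix a dyadic cube $Q_0$ containing $\operatorname{supp}f\cup\operatorname{supp}g$. By \cref{lemma:control_away_from_Q_0} the contribution of $Q_0^\complement$ is already dominated by the single sparse term $\langle\lvert f\rvert\rangle_{Q_0}^2\langle\lvert g\rvert\rangle_{Q_0}\lvert Q_0\rvert$, so it suffices to bound $\int_{Q_0}M^{3\mathcal{D}}g\cdot(S_j^{\mathcal{D}}f)^2\,\D{x}$. Then I would build $\mathscr{S}$ iteratively from $Q_0$ as in \cref{subsec:stopping_family}, using \emph{two} thresholds $\Lambda_0=C_d$ and $\Lambda_j=C_d(1+j)$: the children $\ch(S)$ of a stopping cube $S$ are the maximal dyadic $Q\subsetneq S$ for which at least one of
\begin{equation*}
  \langle\lvert f\rvert\rangle_Q>\Lambda_0\langle\lvert f\rvert\rangle_S,\qquad
  \langle\lvert g\rvert\rangle_{3Q}>\Lambda_0\langle\lvert g\rvert\rangle_{3S},\qquad
  S_j^{\mathcal{D}}(f\1_S)(x)>\Lambda_j\langle\lvert f\rvert\rangle_S\ \text{ on }Q
\end{equation*}
holds. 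The first two conditions produce a bad set of measure $\le C_d\Lambda_0^{-1}\lvert S\rvert$ by the weak $(1,1)$ bound for the Hardy--Littlewood maximal function, the third one of measure $\le C(1+j)\Lambda_j^{-1}\lvert S\rvert$ by \cref{prop:weak_bound_S}; choosing $C_d$ large makes the total at most $\tfrac12\lvert S\rvert$, so $\mathscr{S}$ is $\tfrac12$-sparse with $E_S\coloneqq S\setminus\bigcup_{Q\in\ch(S)}Q$ of measure $\ge\tfrac12\lvert S\rvert$.

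Next I would write $\int_{Q_0}M^{3\mathcal{D}}g\cdot(S_j^{\mathcal{D}}f)^2\,\D{x}=\sum_{S\in\mathscr{S}}\int_{E_S}M^{3\mathcal{D}}g\cdot(S_j^{\mathcal{D}}f)^2\,\D{x}$ and read off from the stopping conditions: on $E_S$ one has $M^{3\mathcal{D}}g\lesssim\Lambda_0\langle\lvert g\rvert\rangle_{3S}$, a bound \emph{free of $j$}; and splitting $(S_j^{\mathcal{D}}f)^2=(S_j^{\mathcal{D}}(f\1_S))^2+\big(\text{terms coming from }P\supsetneq S\big)$, the first summand is $\le\Lambda_j^2\langle\lvert f\rvert\rangle_S^2$ on $E_S$ by the third stopping condition, while the second is the constant $\sum_{P\supsetneq S}\langle f,h_P\rangle^2/\lvert P\rvert\le\sum_{k\ge1}\langle\lvert f\rvert\rangle_{S^{(k)}}^2$ on $S$. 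The local term then contributes $\lesssim\Lambda_j^2\sum_{S\in\mathscr{S}}\langle\lvert f\rvert\rangle_S^2\langle\lvert g\rvert\rangle_{3S}\lvert S\rvert$, and the global term is absorbed by summing over $S$ and telescoping along the stopping tree (using the $\langle\lvert f\rvert\rangle$-stopping condition exactly as in the classical sparse domination of positive dyadic operators), contributing no further power of $j$. Finally I would remove the enlargements by re-sparsifying: $\{3S:S\in\mathscr{S}\}$ is covered by boundedly many (in $d$) sparse families in shifted dyadic grids, so $\sum_S\langle\lvert f\rvert\rangle_{3S}^2\langle\lvert g\rvert\rangle_{3S}\lvert 3S\rvert$ is already of the required form. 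Since $\Lambda_j\sim(1+j)$, this yields $B_j^{\mathcal{D}}(g,f)\lesssim(1+j)^2\sum_{S\in\mathscr{S}}\langle\lvert f\rvert\rangle_S^2\langle\lvert g\rvert\rangle_S\lvert S\rvert$.

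The main obstacle is the bookkeeping in this last step: one must organise the contribution of the cubes $P\supsetneq S$ and the $3$-enlargements so that the telescoping genuinely closes and the loss is \emph{exactly} $(1+j)^2$ — one power from each copy of the square function and none from $M^{3\mathcal{D}}g$, which is why separating the thresholds $\Lambda_0$ and $\Lambda_j$ matters. The remaining verifications (sparseness, the pointwise stopping estimates, the geometric series) are routine.
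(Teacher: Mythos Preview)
Your overall strategy is sound, but it takes a considerably longer route than the paper and, as written, contains a concrete gap in the control of $M^{3\mathcal{D}}g$ on $E_S$.

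The paper's argument is much more direct: it never localises $f$, never stops on averages, and never splits into local and global pieces. It simply sets
\[
F(Q)\coloneqq\{x\in Q: M^{3\mathcal{D}}g(x)>C\langle|g|\rangle_Q\}\cup\{x\in Q: S_j^{\mathcal{D}}f(x)>C(1+j)\langle|f|\rangle_Q\},
\]
chooses $C$ from the weak $(1,1)$ bounds so that $|F(Q)|\le\tfrac12|Q|$, and notes that on $Q\setminus F(Q)$ the integrand $M^{3\mathcal{D}}g\cdot(S_j^{\mathcal{D}}f)^2$ is \emph{pointwise} at most $C^3(1+j)^2\langle|g|\rangle_Q\langle|f|\rangle_Q^2$. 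Iterating on the maximal dyadic cover of $F(Q)$ produces $\mathscr{S}$. Because the stopping is on the level sets of the \emph{global} operators themselves, there is no Haar tail to telescope, no separate $\langle|f|\rangle$-stopping, and no need to re-sparsify the family $\{3S\}$.

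Your detour creates a genuine difficulty. You claim that on $E_S$ one has $M^{3\mathcal{D}}g\lesssim\Lambda_0\langle|g|\rangle_{3S}$, deduced from the stopping rule $\langle|g|\rangle_{3Q}>\Lambda_0\langle|g|\rangle_{3S}$ for $Q\subsetneq S$. But by definition $M^{3\mathcal{D}}g(x)=\sup\{\langle|g|\rangle_{3K}:x\in 3K\}$ tests cubes $K$ with $x\in 3K$, not $x\in K$. A cube $K\subsetneq S$ with large $\langle|g|\rangle_{3K}$ becomes (or lies inside) a stopping child, but $x$ need only lie in $3K$, not in $K$ itself; so $x$ can remain in $E_S$ while $M^{3\mathcal{D}}g(x)$ is large. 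Your stopping on triple-averages therefore does not bound $M^{3\mathcal{D}}g$ on $E_S$, and the local estimate fails as stated. A second issue: the global tail $\sum_{P\supsetneq S}\langle f,h_P\rangle^2/|P|\le\sum_{k\ge1}\langle|f|\rangle_{S^{(k)}}^2$ is not obviously summable, since between consecutive stopping ancestors there can be arbitrarily many dyadic generations, each contributing a term of comparable size; the ``classical telescoping'' you invoke does not close without further work. Both problems evaporate if you stop on the level sets of $M^{3\mathcal{D}}g$ and $S_j^{\mathcal{D}}f$ directly, as the paper does.
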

    \begin{proof}[Proof of \cref{lemma:sparse_domination}]
      Fix a cube $Q_0 \in \mathcal{D}$ containing the union of the supports of $f$ and $g$.
      By \cref{lemma:control_away_from_Q_0} it is enough to construct a sparse family inside $Q_0$.
      Consider the set
      \begin{equation*}
        F(Q_0) \coloneqq  \{ x \in Q_0 \, : \,M^{3\mathcal{D}}g(x) > C \langle \lvert g\rvert \rangle_{Q_0} \} \cup \{ x \in Q_0 \, : \, S_j^{\mathcal{D}} f(x) > C(1+j)\langle \lvert f\rvert \rangle_{Q_0} \} .
      \end{equation*}
      By the weak boundedness of $M^{3\mathcal{D}}$ and $S_j^{\mathcal{D}}$, there exists $C>0$ such that $ \lvert F(Q_0) \rvert \le \frac12 \lvert Q_0 \rvert$.
      Then
      \begin{align*}
        \int_{Q_0} M^{3\mathcal{D}}g \cdot (S_j^{\mathcal{D}} f)^2 & \le \int_{Q_0\setminus F(Q_0)} M^{3\mathcal{D}}g \cdot (S_j^{\mathcal{D}} f)^2 + \int_{F(Q_0)} M^{3\mathcal{D}}g \cdot (S_j^{\mathcal{D}} f)^2 \\
        & \le C^3 (1+j)^2 \langle \lvert g\rvert \rangle_{Q_0}\langle \lvert f\rvert \rangle_{Q_0}^2 \lvert Q_0 \rvert + \sum_{Q\in \mathcal{F}} \int_{Q} M^{3\mathcal{D}}g \cdot (S_j^{\mathcal{D}}f)^2 
      \end{align*}
      where $\mathcal{F}$ is the collection of maximal dyadic cubes covering $F(Q_0)$.
      Iterating on each $Q\in\mathcal{F}$ produces a sparse family of cubes $\mathscr{S}$, since
       $\{E_Q \coloneqq Q \setminus F(Q)\}_{Q\in\mathscr{S}}$ are pairwise disjoint
       and $\lvert E_Q \rvert > \frac12 \lvert Q\rvert$ for each $Q$ in $\mathscr{S}$.
     \end{proof}   

    \section{Proofs for the reduction to a dyadic form}\label{sec:proofs_for_reduction}
    \begin{proof}[Proof of \cref{lemma:LM}]
      We distinguish three cases: $\ell P > 2^r \ell R$, where the goodness is used;
      $\ell P~\in~[\ell R, 2^r\ell R]$, where $\ell P $ and $\ell R$ are comparable;
      and $\ell P < \ell R$, where we use the zero-average of $\Delta_P f$
      and the regularity condition \eqref{eq:smooth_condition}.

      \begin{description}
      \item[($\ell P > 2^r\ell R$)]
        Using the size condition \eqref{eq:size_condition} and taking the supremum in $(x,t)\in W_R$
      \begin{align}
        \theta_t (\Delta_P f)(x) & \lesssim \int_P \frac{t^\alpha}{(t + \lvert x - y \rvert)^{\alpha+d}} \lvert \Delta_Pf(y)\rvert \D{y} \nonumber \\
                              & \le \lVert \Delta_P f \rVert_{L^1} \frac{(\ell R)^\alpha}{(\frac{\ell R}{2} + \operatorname{d}(R,P))^{\alpha+d}} \label{eq:use_size}.
      \end{align}
      If $\operatorname{d}(R,P) > \ell P$, since $\ell P > 2^r\ell R$  the conclusion follows.
      Otherwise, by the goodness of $R$, we have that $\ell P < \operatorname{d}(R,P) \left(\frac{\ell P}{\ell R}\right)^{\gamma}$.
      The same bound holds for $\ell R$, so      
      \begin{equation*}
        D(P,R)^{\alpha+d} < 3^{\alpha+d} \operatorname{d}(P,R)^{\alpha+d} \left( \frac{\ell P}{\ell R} \right)^{\gamma(\alpha+d)} 
      \end{equation*}
      which implies
      \begin{equation*}
        D(P,R)^{\alpha+d} \left( \frac{\ell R}{\ell P} \right)^{\alpha/2} \lesssim_{\alpha,d} \operatorname{d}(P,R)^{\alpha+d} \left( \frac{\ell R}{\ell P} \right)^{-\gamma(\alpha+d)+\alpha/2} \le \operatorname{d}(P,R)^{\alpha+d}
      \end{equation*}
      since $\ell R/\ell P < 1$ and  $\alpha/2 - \gamma(\alpha+d)$ is non-negative for $\gamma \le \frac{\alpha}{2(\alpha+d)}$.
      Then multiply and divide \eqref{eq:use_size} by $D(P,R)^{\alpha+d} (\ell P)^{-\alpha/2} (\ell R)^{\alpha/2}$ 
      to conclude.
      
      \item[($\ell R \le \ell P \le 2^r\ell R$)] The lengths of $P$ and $R$ are comparable
      and the conclusion follows. %

      \item[($\ell P <\ell R$)] Let $x_P$ be the centre of $P$. Then
      \begin{align*}
        \int k_t(x,y) \Delta_Pf(y) \D{y}
        = & \int ( k_t(x,y) - k_t(x,x_P) ) \Delta_Pf(y) \D{y} \\
        \lesssim & \int \frac{\lvert y - x_P\rvert^\alpha}{(t + \lvert x - y \rvert)^{\alpha+d}} \lvert\Delta_Pf(y)\rvert \D{y}
      \end{align*}
       by the smoothness condition \eqref{eq:smooth_condition},
      since $\lvert y - x_P \rvert \le \frac{\ell P}2 < \frac{\ell R}2 < t$.
      To conclude, note that
      \begin{equation*}
        \frac{(\ell P)^\alpha}{(\frac{\ell R}{2} + \operatorname{d}(R,P))^{\alpha+d}}
        < \frac{(\ell P)^\alpha}{(\frac{\ell R}{4} + \frac{\ell P}{4} + \operatorname{d}(R,P))^{\alpha+d}}
        \le 4^{\alpha+d} \frac{(\sqrt{\ell R \ell P})^\alpha}{D(R,P)^{\alpha+d}}.
      \end{equation*}
    \end{description}
  \end{proof}

  \subsection{Counting close cubes}
  In both cases ``\ref{near}'' and ``\ref{close}'',
  given a fixed $R$ we estimate the number of $P$ such that $3P \supset R$.  
  \begin{lemma}\label{lemma:counting3P}
    For $k\in\mathbb{N}$ let
      $\mathscr{P}_k(R) \coloneqq \{ P\,:\,3P\supset R,\ell P = 2^k \ell R\}$.
      Then $\rvert \mathscr{P}_k(R)\rvert = 3^d$.
  \end{lemma}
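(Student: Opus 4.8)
The plan is to reduce to one dimension and then count directly. First I would exploit the product structure of dyadic cubes together with the $\ell^\infty$ metric: writing $P = P_1\times\cdots\times P_d$ and $R = R_1\times\cdots\times R_d$, one has $3P = 3P_1\times\cdots\times 3P_d$, so $3P\supset R$ if and only if $3P_i\supset R_i$ for every $i$, and $\ell P = 2^k\ell R$ forces $\ell P_i = 2^k\ell R_i$ for every $i$. Hence $\mathscr{P}_k(R)$ factors as a product over coordinates, and it suffices to prove that in dimension one there are exactly three admissible intervals.

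In dimension one, since $\ell P = 2^k\ell R\ge \ell R$ and the grid is nested, the interval $R$ is contained in a unique dyadic interval $Q$ of length $\ell P$, namely $Q = R^{(k)}$. The key elementary fact is that the (non-dyadic) interval $3P$ is precisely the union of the three consecutive dyadic intervals of length $\ell P$ around $P$, that is $3P = P^-\cup P\cup P^+$, where $P^-$ and $P^+$ are the two dyadic neighbours of $P$; this is the one-dimensional case of the observation in the remark following \cref{tab:cases}. Because $R$ has positive measure, $3P\supset R$ holds if and only if $R$ is contained in one of $P^-$, $P$, $P^+$; and since $R$ is a dyadic interval of length at most $\ell P$ it is contained in exactly one dyadic interval of length $\ell P$, namely $Q$. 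Therefore $3P\supset R$ if and only if $Q\in\{P^-,P,P^+\}$, equivalently $P\in\{Q^+,Q,Q^-\}$ --- exactly three choices. Taking the product over the $d$ coordinates gives $\lvert\mathscr{P}_k(R)\rvert = 3^d$.

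I do not expect a genuine obstacle here. The only points that deserve a moment's care are the identity $3P = P^-\cup P\cup P^+$ (immediate from the definition of $3P$ as the cube of triple side length with the same centre) and the fact that a dyadic interval sits inside a single dyadic interval of each coarser scale (immediate from nestedness of the shifted grid); both make the open/half-open conventions for $3P$ irrelevant, since containment of the positive-measure set $R$ is unaffected.
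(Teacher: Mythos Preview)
Your argument is correct and essentially the same as the paper's: both identify the admissible $P$ as $R^{(k)}$ together with its neighbours, i.e.\ the $3^d$ cubes of side $\ell R^{(k)}$ contained in $3R^{(k)}$. The only cosmetic difference is that you factor through a coordinate-wise reduction to $d=1$, whereas the paper argues directly in $\mathbb{R}^d$ using adjacency to $R^{(k)}$.
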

  \begin{proof}
    Let $R^{(k)}$ be the $k$-ancestor of $R$. Then $R^{(k)}$ belongs to $\mathscr{P}_k(R)$.
    There are $3^d-1$ cubes $P$ adjacent to $R^{(k)}$ with $\ell P = \ell R^{(k)}$.
    Each of them is such that $3P \supset R^{(k)}$, so in particular $3P \supset R$.

    On the other hand, if $P$ is not adjacent to $R^{(k)}$ and $\ell P = \ell R^{(k)}$
    then $\operatorname{d}(P,R^{(k)}) \ge \ell P$, so $3P$ does not contain $R^{(k)}$, nor $R$.

    This shows that the $P$ in $ \mathscr{P}_k(R)$ are exactly the cubes contained in $3R^{(k)}$ with $\ell P = \ell R^{(k)}$, and
    there are $3^d$ of such cubes.
  \end{proof}
  
    \begin{proof}[Proof of \cref{lemma:2nd_factor_finite}] We present each case separately.
      \begin{description}
      \item[\ref{far} ]
        $\ell P \ge 2^{r+1}\ell R$ and $\operatorname{d}(P,R) > \ell P$. The largest term in $D(P,R)$ is $\operatorname{d}(P,R)$.
        Fix $R$ and $k \in \mathbb{N}$.
        Given $m \in \mathbb{N}$ there are at most $2^{m d}$ cubes $P$ with length $2^k \ell R$
        such that $2^m \ell P < \operatorname{d}(P,R) \le 2^{m+1}\ell P$, so rearranging the sum
        \begin{align*}
          \sum_{\substack{P \,:\, \ell P \ge \ell R \\ \operatorname{d}(R,P) > \ell P}}
          \frac{(\sqrt{\ell R \ell P})^\alpha}{\operatorname{d}(R,P)^{\alpha+d}} \lvert P \rvert 
          & = \sum_{m = 1}^\infty \sum_{k=r}^\infty \sum_{\substack{P\,:\,\ell P = 2^{k+1} \ell R \\ 2^{m+1} \ge \operatorname{d}(P,R)/\ell P > 2^m }}
          \left(\frac{\sqrt{ \ell R \ell P}}{\operatorname{d}(R,P)}\right)^{\alpha} \left( \frac{\ell P }{\operatorname{d}(P,R)}\right)^d \\
          & \le \sum_{m = 1}^\infty \sum_{k=0}^\infty 2^{md} 2^{-\alpha(k/2 + m)} 2^{-m d}
            \le \sum_{k,m} 2^{-\alpha(k/2 + m)} .
        \end{align*}

      \item[\ref{near} ] For $P$ such that $3P \setminus P \supset R$ and $\ell P \ge 2^{r+1} \ell R$, the decay comes from $\operatorname{d}(P,R)$,
        which is bounded below by $\ell P (\ell R/\ell P)^\gamma$, and  $\gamma = \alpha/(4\alpha+4d)$.  Then
        \begin{align*}
          \sum_{\substack{P \,:\, 3P\setminus P \supset R \\ \ell P \ge 2^{r+1}\ell R}}
          \frac{(\sqrt{\ell R \ell P})^\alpha}{\operatorname{d}(R,P)^{\alpha+d}} \lvert P \rvert
          &= \sum_{k=r+1}^\infty \sum_{\substack{P \,:\, 3P\setminus P \supset R \\ \ell P = 2^k \ell R}} \frac{\lvert P \rvert}{\operatorname{d}(P,R)^d} \left(\frac{\sqrt{\ell P \ell R}}{\operatorname{d}(P,R)}\right)^\alpha 
          \lesssim_d \sum_{k=r+1}^\infty 2^{-k\alpha/4}
        \end{align*}
        where, by \cref{lemma:counting3P}, the $P$ in the sum are at most $3^d$ for each $k$.

      \item[\ref{close} ] For $\ell R\le \ell P \le \ell R^{(r)}$ and $3P \supset R$, the leading term in the long-distance is $\ell R$.        
        \begin{equation*}
          \frac{(\sqrt{\ell R \ell P})^\alpha}{D(R,P)^{\alpha+d}} \lvert P \rvert \le \frac{(2^{r/2} \ell R)^\alpha}{(\ell R)^{\alpha}} \frac{\lvert P\rvert}{\lvert R\rvert} \le 2^{\alpha r/2} \frac{\lvert P\rvert}{\lvert R\rvert}.
        \end{equation*}
        To estimate the term we will fix a scale $k$ for $P$, such that $0\le k \le r$, then
        \begin{align*}
          \sum_{\substack{P \,:\,3P\supset R\\\ell R\le \ell P \le \ell R^{(r)}}}
          \frac{(\sqrt{\ell R \ell P})^\alpha}{D(R,P)^{\alpha+d}} \lvert P \rvert
          & \lesssim \sum_{\substack{P\,:\,3P\supset R\\\ell R\le \ell P \le \ell R^{(r)}}} \frac{\lvert P \rvert}{\lvert R\rvert}
          = \sum_{k=0}^r \sum_{\substack{P\,:\,3P\supset R\\\ell P = 2^k \ell R}} 2^{kd} \\
          & \le 2^{rd} \sum_{k=0}^r \lvert \{ P\,:\,3P\supset R,\ell P = 2^k \ell R\}\rvert
          \le 2^{rd} 3^d (r+1) .
        \end{align*}
        Where to estimate the number of $P$ we used \cref{lemma:counting3P}.
        
      \item[\ref{subscale}, $P\subset 3R$ ]The leading term in the long-distance $D(R,P)$ is again $\ell R$.
        For any $k \in \mathbb{N}$, there are $3^d 2^{k d}$ cubes $P$ such that $P \subset 3R$ and $ 2^k \ell P = \ell R$, so 
        \begin{align*}
          \sum_{P \,:\, P \subset 3R}
          \frac{(\sqrt{\ell R \ell P})^\alpha}{D(R,P)^{\alpha+d}} \lvert P \rvert
          & \le \sum_{P \subset 3R} \left(\frac{\ell P}{\ell R}\right)^{\alpha/2 + d}
          = \sum_{k = 1}^\infty \sum_{\substack{P \subset 3R \\ \ell P = 2^{-k} \ell R}} 2^{-k\frac{\alpha}2} 2^{-k d} \lesssim_d \sum_{k = 1}^\infty 2^{-k\frac{\alpha}2} < \infty .
        \end{align*}
      \item[\ref{subscale}, $P \not\subset 3R$ ]  In this case $\operatorname{d}(P,R) > \ell R > \ell P$.
        Regroup the $P$ according to length and distance:
        \begin{align*}
          \sum_{\substack{P \,:\,P \not\subset 3R \\ \ell P < \ell R}}
          \frac{(\sqrt{\ell R \ell P})^\alpha}{D(R,P)^{\alpha+d}} \lvert P \rvert 
          & = \sum_{k \in \mathbb{N}} \sum_{\substack{P \,:\, 2^k \ell P = \ell R \\ \operatorname{d}(P,R) > \ell R}} 2^{-kd} \left(\frac{\ell R }{D(P,R)}\right)^d 2^{-k\alpha/2} \left(\frac{\ell R}{D(P,R)}\right)^\alpha \\
          & \le  \sum_{k,m} \sum_{\substack{P \,:\, 2^k \ell P = \ell R \\ 2^{m+1} \ge \operatorname{d}(P,R) / \ell R > 2^m}} 2^{-k(d+\alpha/2)} 2^{-m d} 2^{-m\alpha} \le  \sum_{k,m} 2^{-k\alpha/2} 2^{-m\alpha} .
        \end{align*}
        This because there are at most $2^{m d}$ cubes $R$ in the range given by the distance,
        which means at most $2^{m d} \cdot 2^{k d}$ cubes $P$ with $\ell P = 2^{-k}\ell R$.
      \end{description}
    \end{proof}

    \begin{proof}[Proof of \cref{lemma:adapted_TH} (for $\ell P < \ell R$)]
      Recall that $\gamma \in (0,\frac12)$.
      Let $K$ be the minimal cube $K \supset R$ such that $\ell K \ge 2^r \ell R$ and $\operatorname{d}(P,R) \le \ell K \left(\frac{\ell P}{\ell K}\right)^\gamma$.
      (The set of such cubes is not empty since $\ell K \left(\frac{\ell P}{\ell K}\right)^\gamma$ equals $\ell P \left(\frac{\ell K}{\ell P}\right)^{1-\gamma}$ which goes to infinity as $\ell K \to \infty$.)      
      First, observe that $P \subset K$. Suppose not, then 
      \begin{equation*}
        \ell K \left(\frac{\ell P}{\ell K}\right)^\gamma < \ell K \left(\frac{\ell R}{\ell K}\right)^\gamma < \operatorname{d}(R,\partial K) \overset{P \subset K^{\complement}}{\le} \operatorname{d}(R,P)
      \end{equation*}
      which is absurd because of the second condition on $K$.
      It remains to show the upper bound for $\ell K$. By minimality of $K$, one of the following conditions holds: either
      \begin{equation*}
        \frac{\ell K}2 < 2^r \ell R \quad \text{ or } \quad \frac{\ell K}2 \left(\frac{\ell P}{\frac12 \ell K}\right)^\gamma < \operatorname{d}(P,R).
      \end{equation*}
      Since by hypothesis $\operatorname{d}(P,R) > (\ell R)^{1-\gamma} (\ell P)^\gamma$,
      the first implies
      \begin{equation*}
        \ell K \left(\frac{\ell P}{\ell K}\right)^\gamma \le 2^r \ell R \left(\frac{\ell P}{\ell K}\right)^\gamma \le 2^r \ell R \left(\frac{\ell P}{\ell R}\right)^\gamma < 2^r \operatorname{d}(P,R) .
      \end{equation*}
      The latter gives: $\ell K (\ell P/ \ell K)^\gamma < 2 \operatorname{d}(P,R) \le 2^r \operatorname{d}(P,R)$.
    \end{proof}

    \appendix   
    
    \section{Conditional expectation and Haar projections}\label{apx:old_tricks}
    In this appendix we recall some known bounds for the Haar projection.
    These involve conditional expectation and martingales related to the Haar system, see also \cite[\S 6.4]{GrafakosClassical}.

    Let $\mathscr{S}$ be the stopping family   defined in \cref{subsec:stopping_family}.
    Given $S\in\mathscr{S}$, let $\mathcal{A}^\star(S)$ be the maximal stopping cubes inside $S$.
    Let $\mathscr{G}_S$ be the $\sigma$-algebra generated by $\mathcal{A}^\star(S)$.
    A function is measurable with respect the $\sigma$-algebra $\mathscr{G}_S$ if and only if   it is constant on any cube in $\mathcal{A}^\star(S)$.

    \subsection{Conditional expectation}
    Denote by $\mathbb{E}[\,\cdot\, | \mathscr{G}_S ]$ the projection on the space of measurable functions with respect to
    the $\sigma$-algebra $\mathscr{G}_S$.
    \begin{equation*}
      \mathbb{E}[ f | \mathscr{G}_S ](x) =
      \begin{dcases*}
        f(x) & if $x \in S \setminus \mathcal{A}(S)$ \\
        \langle f \rangle_{S'} & if $x \in S'$ for some $S' \in \mathcal{A}^\star(S)$.
      \end{dcases*}
    \end{equation*}
    For more details about this operator, we refer the reader to \cite[\S 2.6]{Martingales_LPtheory}.
    Let $\mathscr{S}$ be a stopping family for $f$.
    The supremum of $ \mathbb{E}[ f | \mathscr{G}_S ]$ in $S$ is either $f(x)$ (if $\mathcal{A}(S)$ is empty), or $\langle f \rangle_{S'}$ for some $S' \in \mathcal{A}^\star(S)$.
    In both cases   $ \lVert  \mathbb{E}[ f \1_S | \mathscr{G}_S ] \rVert_{L^\infty(S)} \lesssim_d \langle f \rangle_S $, since $\langle f \rangle_{S'} \le 2^d A \langle f \rangle_S$
    by the stopping conditions.

    \subsection{Haar projection}
    Given $S \in \mathscr{S}$, let  $\Stop{S} = \{ Q \in \mathcal{D} \, :\, \widehat{Q} = S\}$ be the collection of cubes $Q$
    such that $S$ is the minimal stopping cube containing $Q$.

    The Haar projection on $S$ is given by
    \begin{equation*}
      \EuScript{P}_S f \coloneqq \sum_{I \in \Stop{S}} \Delta_I f = \sum_{I\in\Stop{S}} \sum_{\epsilon \in \{0,1\}^d \setminus \{0\}^d} \langle f,h_I^\epsilon \rangle h_I^\epsilon
    \end{equation*}
    where $ \{h_I^\epsilon\}_{\epsilon}$ are the Haar functions on $I$.
    Being a sum of Haar functions on cubes in $\Stop{S}$, the Haar projection $\EuScript{P}_S f$
    is constant on any $S'\in\mathcal{A}^\star(S)$, so it's measurable on $\mathscr{G}_S$.
    It also holds that $ \EuScript{P}_S f = \EuScript{P}_S \mathbb{E}[ f \1_S | \mathscr{G}_S ] $.
    
    The Haar projection $\EuScript{P}_S f$ can be seen as
    a martingale transform, and so it satisfies the following
    \begin{lemma}[$L^p$ bound for martingale transform \cite{MR744226}]
      For $1 < p < \infty$ we have
      \begin{equation}\label{eq:Lp_bound_martingale}
        \lVert \EuScript{P}_S \mathbb{E}[f\1_S | \mathscr{G}_S] \rVert_p \le C_p \lVert \mathbb{E}[f\1_S | \mathscr{G}_S] \rVert_p .
      \end{equation}
    \end{lemma}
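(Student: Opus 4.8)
The plan is to recognise $\EuScript{P}_S \mathbb{E}[f\1_S \mid \mathscr{G}_S]$ as a dyadic martingale transform and to conclude via Burkholder's classical $L^p$ inequality for martingale transforms \cite{MR744226}. Write $h \coloneqq \mathbb{E}[f\1_S \mid \mathscr{G}_S]$; since $f \in L^\infty$ is compactly supported and $h$ is supported on the single cube $S$ with $\lvert S\rvert < \infty$, we have $h \in L^p(S)$, and it is enough to show $\lVert \EuScript{P}_S h \rVert_p \lesssim_p \lVert h\rVert_p$. First I would set up the martingale: work on the finite measure space $(S,\D x)$ with the dyadic filtration $\mathcal{F}_k$ generated by the dyadic cubes $I \subseteq S$ with $\ell I = 2^{-k}\ell S$, put $M_k \coloneqq \mathbb{E}[h \mid \mathcal{F}_k]$, and note that its differences are exactly the Haar blocks $d_{k+1} = M_{k+1} - M_k = \sum_{I\subseteq S,\ \ell I = 2^{-k}\ell S} \Delta_I h$, with $M_k \to h$ in $L^p$, so that $\sum_{k\ge 0} d_{k+1} = h - \langle h\rangle_S \1_S$.

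Next I would exhibit $\EuScript{P}_S h$ as the transform of $(M_k)$. Because the cubes of $\Stop{S}$ of a fixed side length are pairwise disjoint,
\begin{equation*}
  \EuScript{P}_S h = \sum_{I \in \Stop{S}} \Delta_I h = \sum_{k\ge 0} \varepsilon_k\, d_{k+1}, \qquad
  \varepsilon_k \coloneqq \sum_{\substack{I \in \Stop{S}\\ \ell I = 2^{-k}\ell S}} \1_I \in \{0,1\}.
\end{equation*}
The hard part — the one point requiring care — is to verify that $(\varepsilon_k)$ is a \emph{predictable} multiplier sequence: one must observe that membership of a dyadic cube $I$ in the \emph{fixed} collection $\Stop{S}$ is a property of $I$ alone, so $\varepsilon_k$ is constant on every generation-$k$ cube, hence $\mathcal{F}_k$-measurable, and clearly $\lvert \varepsilon_k\rvert \le 1$. (The blocks $\Delta_I h$ with $I \notin \Stop{S}$ and $I \subseteq S$ play no role, since such $I$ lie inside a maximal stopping cube on which $h$ is constant, so $\Delta_I h = 0$.)

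Finally, Burkholder's martingale transform theorem \cite{MR744226} supplies a constant $c_p$ depending only on $p \in (1,\infty)$ with
\begin{equation*}
  \Big\lVert \sum_{k\ge 0} \varepsilon_k\, d_{k+1} \Big\rVert_p \le c_p \Big\lVert \sum_{k\ge 0} d_{k+1} \Big\rVert_p
  = c_p \lVert h - \langle h\rangle_S \1_S \rVert_p \le 2 c_p \lVert h\rVert_p,
\end{equation*}
using $\lVert \langle h\rangle_S \1_S \rVert_p = \lvert\langle h\rangle_S\rvert\, \lvert S\rvert^{1/p} \le \lVert h\rVert_p$ by Hölder's inequality on $S$. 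Recalling $h = \mathbb{E}[f\1_S \mid \mathscr{G}_S]$ yields \eqref{eq:Lp_bound_martingale} with $C_p = 2 c_p$. Apart from the predictability observation, everything here — the $L^p$ martingale convergence, the identification of $d_{k+1}$ with the Haar blocks, and the crude bound on the zeroth term — is routine.
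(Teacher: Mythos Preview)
The paper does not give a proof of this lemma at all: it is stated as a citation to Burkholder \cite{MR744226} and used as a black box. Your proposal is a correct and clean reduction to Burkholder's theorem --- the identification of the Haar blocks with martingale differences, the predictability of the $\{0,1\}$-valued multipliers $\varepsilon_k$, and the handling of the zeroth term are all fine.

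One remark: your own parenthetical observation actually makes the appeal to Burkholder unnecessary for this particular $h=\mathbb{E}[f\1_S\mid\mathscr{G}_S]$. Since $h$ is constant on every maximal stopping cube $S'\in\mathcal{A}^\star(S)$, you have $\Delta_I h=0$ whenever $I\subseteq S$ and $I\notin\Stop{S}$, so in fact $\varepsilon_k d_{k+1}=d_{k+1}$ for every $k$ and
\[
\EuScript{P}_S h=\sum_{k\ge 0} d_{k+1}=h-\langle h\rangle_S\1_S,
\]
whence $\lVert \EuScript{P}_S h\rVert_p\le 2\lVert h\rVert_p$ by the triangle inequality alone. The martingale-transform machinery is only genuinely needed if one wants \eqref{eq:Lp_bound_martingale} for a general $L^p$ function in place of $\mathbb{E}[f\1_S\mid\mathscr{G}_S]$, which is presumably why the paper phrases it as a citation rather than spelling out this special case.
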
  
    Combining \eqref{eq:Lp_bound_martingale} with the estimate for the supremum of $\mathbb{E}[f\1_S | \mathscr{G}_S]$ one obtains that
    \begin{equation*}
      \lVert \EuScript{P}_S f\rVert_p \lesssim_p \langle f\rangle_S .
    \end{equation*}

    \subsection{Richer \texorpdfstring{$\sigma$-}{sigma}-algebras and \texorpdfstring{$r$}{r}-Haar projections}
    The same idea works with slight modifications when $S$ is the minimal stopping cube containing the $r$-ancestor of $Q$.
    Let $\rStop{S}$ be the collection of cubes $Q$ such that $\widehat{Q^{(r)}} = S$.
    Define the $r$-Haar projection on $S$ as
    \begin{equation*}
      \EuScript{P}_S^r f = \sum_{Q \in \rStop{S}} \Delta_Q f.
    \end{equation*}
    \begin{remark}
      The projection $\EuScript{P}_{S}^{r} f$ is \emph{not} measurable on $\mathscr{G}_S$ in general,
      but it is measurable with respect to the richer $\sigma$-algebra generated by the $r$-grandchildren of $S'\in\mathcal{A}^\star(S)$, which is
      \begin{equation*}
        \mathscr{G}_{S}^r \coloneqq \sigma \Big( \{ (S')_r \in \ch_r(S'), S' \in \mathcal{A}^\star(S) \} \Big) .
      \end{equation*}  
    \end{remark}
    Then $\EuScript{P}_{S}^{r} f = \EuScript{P}_{S}^{r} \mathbb{E}[ f \1_S | \mathscr{G}_{S}^r ]$
    and we have the following
    \begin{lemma} \label{lemma:bound_sup_expectation}
      Given a function $f$, let $S$ be a stopping cube in $\mathscr{S}_f$ as defined in \cref{subsec:stopping_family}. Then 
      \begin{equation*}
        \lVert  \mathbb{E}[ f \1_S | \mathscr{G}_{S}^r ] \rVert_{L^\infty(S)} \lesssim_{d,r} \langle f \rangle_S .
      \end{equation*}
    \end{lemma}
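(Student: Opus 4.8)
The plan is to write $\mathbb{E}[f\1_S \mid \mathscr{G}_S^r]$ explicitly on $S$ and then bound it cube by cube, exactly as one bounds the good part $a$ in the Calderón--Zygmund decomposition of \cref{prop:C-Z_r}. Since the atoms of $\mathscr{G}_S^r$ are the $r$-grandchildren in $\ch_r(S')$ for each $S'\in\mathcal{A}^\star(S)$, together with the ``free'' part $S\setminus\bigcup_{S'\in\mathcal{A}^\star(S)}S'$, the conditional expectation has the same shape as the one displayed above for $\mathscr{G}_S$, only with each maximal stopping cube $S'$ subdivided into its $r$-grandchildren:
\begin{equation*}
  \mathbb{E}[f\1_S \mid \mathscr{G}_S^r](x) =
  \begin{dcases*}
    f(x) & if $x \in S\setminus\bigcup_{S'\in\mathcal{A}^\star(S)}S'$,\\
    \langle f\rangle_{(S')_r} & if $x\in (S')_r\in\ch_r(S')$ for some $S'\in\mathcal{A}^\star(S)$.
  \end{dcases*}
\end{equation*}
So it suffices to bound $|f(x)|$ for a.e.\ $x$ in the free part and $|\langle f\rangle_{(S')_r}|$ for every $r$-grandchild $(S')_r$.

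For the free part I would use the stopping construction together with the Lebesgue differentiation theorem: if $x\in S\setminus\bigcup_{S'}S'$ then every dyadic cube $R\subseteq S$ containing $x$ has $\widehat R = S$ (it is contained in no maximal stopping cube below $S$, since $x$ is in none of them), hence $R\in\Stop{S}$ and $\langle|f|\rangle_R\le A\langle|f|\rangle_S$ by the definition of $\mathcal{A}^\star(S)$; letting $R\downarrow x$ gives $|f(x)|\le A\langle|f|\rangle_S$ for a.e.\ such $x$. For an $r$-grandchild $(S')_r\in\ch_r(S')$ of $S'\in\mathcal{A}^\star(S)$ I would use maximality of $S'$: its dyadic parent $(S')^{(1)}\subseteq S$ fails the stopping inequality, so $\langle|f|\rangle_{(S')^{(1)}}\le A\langle|f|\rangle_S$; since $|(S')_r| = 2^{-rd}|S'| = 2^{-(r+1)d}|(S')^{(1)}|$, exactly the computation in the proof of \cref{prop:C-Z_r} gives
\begin{equation*}
  |\langle f\rangle_{(S')_r}| \le \frac{|(S')^{(1)}|}{|(S')_r|}\fint_{(S')^{(1)}}|f| \le 2^{(r+1)d}A\langle|f|\rangle_S .
\end{equation*}

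Taking the two bounds together yields $\lVert\mathbb{E}[f\1_S\mid\mathscr{G}_S^r]\rVert_{L^\infty(S)}\le 2^{(r+1)d}A\langle|f|\rangle_S\lesssim_{d,r}\langle f\rangle_S$, with $A$ the stopping constant of \cref{subsec:stopping_family} and the factor $2^{(r+1)d}$ absorbed into $\lesssim_{d,r}$ (the right-hand side being really $\langle|f|\rangle_S$, which is what the stopping construction controls). The argument is essentially bookkeeping with volume ratios; the only point that needs a word of care is the a.e.\ identification of $f$ with its dyadic averages on the free part $S\setminus\bigcup_{S'}S'$, where the Lebesgue differentiation theorem (equivalently, the martingale convergence theorem) enters. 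Everything else has already been carried out, in a different guise, in the proof of \cref{prop:C-Z_r}.
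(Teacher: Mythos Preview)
Your proof is correct and follows essentially the same approach as the paper's: split into the free part (controlled by the stopping condition via Lebesgue differentiation) and the $r$-grandchildren of $S'\in\mathcal{A}^\star(S)$ (controlled by the volume ratio against the parent $(S')^{(1)}$, which is in $\Stop{S}$ by maximality). The paper reaches the same $2^{d(r+1)}A$ bound by passing through the intermediate cube $P=((S')_r)^{(1)}$ and then its $r$-ancestor $P^{(r)}=(S')^{(1)}$, but this is just a two-step version of your one-step volume comparison; you are also slightly more careful in invoking Lebesgue differentiation explicitly on the free part.
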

    \begin{proof}
      Either $\lvert f(x) \rvert \le A \langle f \rangle_S$ for all $x \in S$, or there exists $S' \in \mathcal{A}^\star(S)$ with $x_0 \in (S')_r$
      and $\mathbb{E}[ f \1_S | \mathscr{G}_{S}^r ](x_0)  = \langle f \rangle_{(S')_r} $.
      Let $P$ be the dyadic parent of $(S')_r$. Then $P\in \rStop{S}$ and we have
      \begin{equation*}
        \langle f \rangle_{(S')_r} \le 2^d \langle f \rangle_{P} \le 2^d 2^{d r} \langle f \rangle_{P^r} \le 2^{d(r+1)} A\langle f \rangle_S 
      \end{equation*}
      where we used the stopping condition in the last inequality.
    \end{proof}

    \subsection{Haar projection on maximal cubes}\label{subsec:Haar-proj_on_maximal_cubes}
    For $S\in\mathscr{S}$, the $r$-grandchildren $\maxStop{S}$ are the maximal cubes in $\rStop{S}$.
    Then the restriction of Haar projection $\EuScript{P}_{S}^r$ on a $\mU{S}\in\maxStop{S}$ is
    \begin{equation*}\tag{\ref{eq:bound_Haar-projector}}
      \EuScript{P}_{\mU{S}} f \coloneqq \sum_{\substack{Q\in\rStop{S}\\ Q\subseteq \mU{S}}} \Delta_Q f \quad \text{ and satisfies }  \quad \langle \lvert \EuScript{P}_{\mU{S}} f\rvert \rangle_{\mU{S}} \lesssim \langle\lvert f\rvert\rangle_S.
    \end{equation*}
    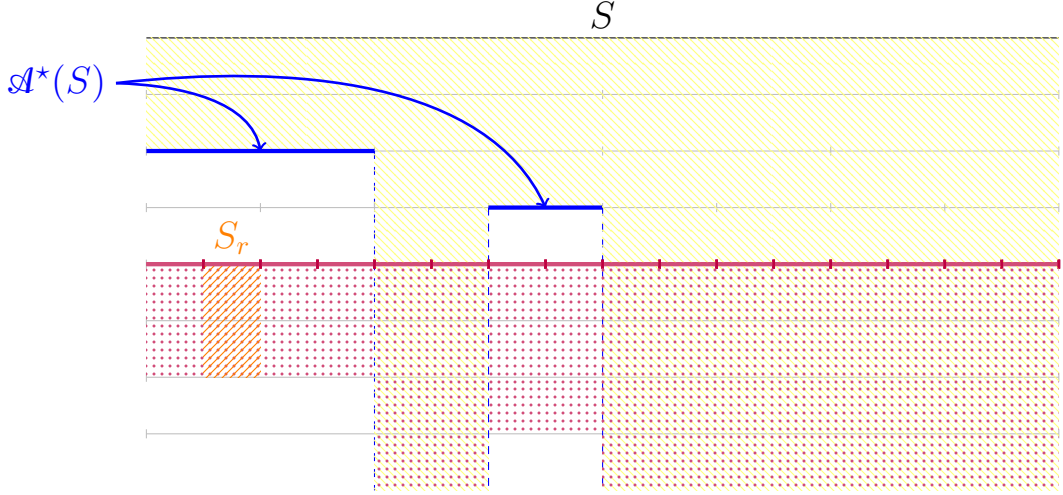
\begin{figure}\centering
      
      \begin{tikzpicture}[xscale=4,yscale=3]
  \pgfmathtruncatemacro\N{3}

  \foreach \i in {0,1,2,...,\N}
  {
    \draw[lightgray,-] (0,-\i*.25 - \N*.25 - .25) -- (\N,-\i*.25 - \N*.25 -.25);
    \foreach \j in {0,\N}
    \draw[lightgray] (\j,-\i*.25 + 0.02 -\N*.25 -.25) -- (\j,-\i*.25 - 0.02 - \N*.25-.25);    
  }

  \draw (\N*.5,0) node[above] {\Large $S$};
  \draw[-] (0,0) -- (\N,0);
  \draw[color=blue] (-.1,-.2) node[left] {\Large $\mathcal{A}^\star(S)$};
  
  \foreach \i in {1,2,...,\N}
  {
    \draw[lightgray,-] (0,-\i*.25) -- (\N,-\i*.25);

    \draw[lightgray] (0,-\i*.25 + 0.02) -- (0,-\i*.25 - 0.02);
    
    \pgfmathtruncatemacro\ii{2^(\i)}
    \pgfmathsetmacro\step{\N/(\ii)}

    \foreach \j in {1,...,\ii}
    \draw[lightgray] (\j*\step,-\i*.25 + 0.02) -- (\j*\step,-\i*.25 - 0.02);

    \pgfmathtruncatemacro\G{\N-1}
    \ifthenelse{\i=\G}{
      \fill[pattern=north west lines, pattern color=yellow!50] (0,-\i*.25) rectangle (\step,0);

      \draw[line width=1pt,color=blue,->] (-.1,-.2) to[bend left,in=130] (\step*.5,-\i*.25); 
      \draw[ultra thick,color=blue] (0,-\i*.25) -- (\step,-\i*.25);
      \draw[dashed,color=blue] (\step,-\i*.25) -- (\step,-2*\N*.25 -.5);
    }{
      \ifthenelse{\i=\N}{
        \fill[pattern=north west lines, pattern color=yellow!50] (3*\step,-\i*.25)--(4*\step,-\i*.25)--(4*\step,0)--(3*\step,0);
        \fill[pattern=north west lines, pattern color=yellow!50] (2*\step,0)--(2*\step,-2*\N*.25-.5)--(3*\step,-2*\N*.25-.5)--(3*\step,0);

        \draw[line width=1pt,color=blue,->] (-.1,-.2) to[bend left,in=130] (3.5*\step,-\i*.25); 
        \draw[ultra thick,color=blue] (3*\step,-\i*.25) -- (4*\step,-\i*.25);
        \draw[dashed,color=blue] (3*\step,-\i*.25) -- (3*\step,-2*\N*.25-.5);
        \draw[dashed,color=blue] (4*\step,-\i*.25) -- (4*\step,-2*\N*.25-.5);

        \fill[pattern=north west lines, pattern color=yellow!50] (4*\step,0)--(4*\step,-2*\N*.25-.5)--(\N,-2*\N*.25-.5)--(\N,0);
      }{}
    }     

    \begin{scope}[yshift=-(\N+1)*.25 cm] 
      \clip (-0.1,0.2) rectangle (\N+.1,-\N*.25 -.25);

      \draw[line width=1.5pt,purple!70] (0,0) -- (\N,0);
      \pgfmathtruncatemacro\ss{2^(\N+1)}
      \pgfmathsetmacro\steps{\N/(\ss)}

      \foreach \j in {1,...,\ss}
      \draw[line width=1pt,purple] (\j*\steps,0.02) -- (\j*\steps,-0.02);

      \pgfmathtruncatemacro\G{\N-1}
      \ifthenelse{\i=\G}{

        \fill[pattern=dots, pattern color=purple!70] (0,-\i*.25) rectangle (\step,0);

        \draw[color=orange] (1.5*\steps,0) node[above] {\Large $S_r$};
        \fill[pattern=north east lines, pattern color=orange] (\steps,0) rectangle (2*\steps,-\G*.25);

      }{
        \ifthenelse{\i=\N}{

          \fill[pattern=dots, pattern color=purple!70] (3*\step,-\i*.25)--(4*\step,-\i*.25)--(4*\step,0)--(3*\step,0);
          \fill[pattern=dots, pattern color=purple!70] (2*\step,0)--(2*\step,-2*\N*.25-.5)--(3*\step,-2*\N*.25-.5)--(3*\step,0);

          \fill[pattern=dots, pattern color=purple!70] (4*\step,0)--(4*\step,-2*\N*.25-.5)--(\N,-2*\N*.25-.5)--(\N,0);
        }{}
      }     
    \end{scope}   

  }

\end{tikzpicture}

      \caption{An example of stopping tree $\Stop{S}$ and the maximal stopping cubes in $\mathcal{A}^\star(S)$.
        Below, shifted by $r$ generations, there is the stopping tree $\rStop{S}$.
        The cubes $Q$ in $\rStop{S}$ contained in a specific $r$-grandchild $S_r$ are highlighted.}
      \label{fig:StoppingTree}
    \end{figure}
    
    \begin{proof}[Proof of \eqref{eq:bound_Haar-projector}]      
      The Haar projector $\EuScript{P}_{\mU{S}} f$ is measurable with respect to the $\sigma$-algebra $\mathscr{G}_{S}^r$, then
      \begin{flalign*}
        \int_{\mU{S}} \lvert \EuScript{P}_{\mU{S}} f\rvert = \int_{\mU{S}} \lvert \EuScript{P}_{\mU{S}} \mathbb{E}[f\1_{\mU{S}} | \mathscr{G}_{S}^r]\rvert &
        \le \lVert \1_{\mU{S}} \rVert_{L^{p'}} \lVert \EuScript{P}_{\mU{S}} \mathbb{E}[f\1_{\mU{S}} | \mathscr{G}_{S}^r] \rVert_{L^p(\mU{S})} \nonumber \\
        \text{ by } \eqref{eq:Lp_bound_martingale} \quad & \lesssim_p \lVert \1_{\mU{S}} \rVert_{L^{p'}} \lVert \mathbb{E}[f\1_{\mU{S}} | \mathscr{G}_{S}^r] \rVert_{L^p(\mU{S})} \\
        & \le \lvert \mU{S} \rvert^{\frac{1}{p'}} \lvert \mU{S} \rvert^{\frac{1}{p}} \lVert \mathbb{E}[f\1_{\mU{S}} | \mathscr{G}_{S}^r] \rVert_\infty \\
        \text{by \cref{lemma:bound_sup_expectation} } & \lesssim \lvert \mU{S} \rvert \langle f \rangle_{S} .
      \end{flalign*}      
      Divide by $\lvert \mU{S}\rvert$ both sides to conclude.
    \end{proof}

    \section*{Acknowledgements}
    This work will be part of the author's PhD thesis supervised by Maria Carmen Reguera whose patience, support and guidance is greatly appreciated.
    The author wishes to thank Gennady Uraltsev for stimulating discussions on related topics.

    \nocite{SemmesTb,IntuitiveDyadic,HofmannSurvey}
    \printbibliography
    
  \end{document}